\definecolor{citecol}{HTML}{6F130C}
\definecolor{tableofcontent}{HTML}{1F4A83}
\definecolor{urlcol}{HTML}{2470D8}
\newcommand{\tr}{\mathrm{tr}}
\newcommand{\uG}{p}
\newcommand{\cfra}[1][l,\uG]{ 
    \boldsymbol{\varphi}_{#1}
    }
\newcommand{\cfrb}[2][l,\uG]{ 
    \boldsymbol{\psi}_{#1}^{#2}
}
\newcommand{\scala}{ 
    \alpha
}
\newcommand{\scalb}{ 
    \beta
}
\newcommand{\ipG}[1]{\left\langle #1 \right\rangle
}
\newcommand{\FT}[1]{ 
    \widehat{#1}
}
\newcommand{\conj}[1]{ 
 \overline{#1}
}
\def\1{\bm{1}}
\def\vu{{\bm{u}}}
\def\mI{{\bm{I}}}
\def\mU{{\bm{U}}}
\def\mX{{\bm{X}}}
\def\mY{{\bm{Y}}}
\def\mZ{{\bm{Z}}}
\DeclareMathAlphabet{\mathsfit}{\encodingdefault}{\sfdefault}{m}{sl}
\SetMathAlphabet{\mathsfit}{bold}{\encodingdefault}{\sfdefault}{bx}{n}
\def\gG{{\mathcal{G}}}
\def\gT{{\mathcal{T}}}
\def\gV{{\mathcal{V}}}
\def\gW{{\mathcal{W}}}
\newcommand{\R}{\mathbb{R}}
\newcounter{jyhcomm}
\definecolor{aqua}{rgb}{0.00,0.67,0.80}
\begin{document}

\title{Limiting Over-Smoothing and Over-Squashing of Graph Message Passing by Deep Scattering Transforms}

\author{\name Yuanhong Jiang
\email \texttt{\rm jyh36681@gmail.com}\\
\addr 
Shanghai Jiao Tong University
\AND
\name  Dongmian Zou
\email \texttt{\rm dongmian.zou@duke.edu}\\
       \addr Duke Kunshan University
\AND
\name Xiaoqun Zhang$^{*}$
\email \texttt{\rm xqzhang@sjtu.edu.cn}\\
       \addr 
       Shanghai Jiao Tong University
\AND     
\name Yu Guang Wang$^{*}$
\email \texttt{\rm yuguang.wang@sjtu.edu.cn}\\
       \addr 
       Shanghai Jiao Tong University\\
       \&
       University of New South Wales}

\renewcommand{\thefootnote}{\fnsymbol{footnote}} 
\footnotetext[1]{ Corresponding author. This work was supported by the NSFC (Grant No. 12090024, 12301117), the Natural Science
Foundation of Chongqing, China (CSTB2023NSCQ-LZX0054). We thank the Student Innovation Center at Shanghai Jiao Tong University for providing us the computing services.} 
\editor{Joan Bruna}

\maketitle

\begin{abstract}
Graph neural networks (GNNs) have become pivotal tools for processing graph-structured data, leveraging the message passing scheme as their core mechanism. However, traditional GNNs often grapple with issues such as instability, over-smoothing, and over-squashing, which can degrade performance and create a trade-off dilemma. In this paper, we introduce a discriminatively trained, multi-layer Deep Scattering Message Passing (DSMP) neural network designed to overcome these challenges. By harnessing spectral transformation, the DSMP model aggregates neighboring nodes with global information, thereby enhancing the precision and accuracy of graph signal processing. We provide theoretical proofs demonstrating the DSMP's effectiveness in mitigating these issues under specific conditions. Additionally, we support our claims with empirical evidence and thorough frequency analysis, showcasing the DSMP's superior ability to address instability, over-smoothing, and over-squashing.
\end{abstract}

\vspace{0.5cm}
\begin{keywords}
Message Passing Neural Network, Scattering Transform, Over-smoothing, Over-squashing
\end{keywords}

\tableofcontents

\newpage
\section{Introduction}
Graphs offer an elegant and comprehensive means to describe data's intricate topology or structure. A wide array of relational systems and structured entities have been aptly represented by graphs, encompassing their internal connections. Notable examples of such applications include recommendation systems~\citep{wu2022graph}, prediction of molecular properties~\citep{bumgardner2021drug}, analysis of social networks~\citep{min2021stgsn}, and particle system investigations~\citep{li2009optimal,wang2022acmp}. The inherent topology embedded within these graphs enables the establishment of local communication channels, facilitating the exchange of neighborhood information~\citep{bruna2013spectral,kipf2017semi,velivckovic2017graph}. 
In this paper, we develop a scattering wavelet-based message passing neural network, which has proved excellent performance in dealing with the commonly prevalent issues of over-smoothing, over-squashing, and instability in GNNs.

GNNs typically adhere to the \emph{message passing} paradigm, which is the foundation of \emph{message passing neural networks} (MPNNs)~\citep{gilmer2017neural}. Most MPNNs extract graph embeddings from the spatial domain and have demonstrated their utility in diverse real-world tasks. These networks efficiently capture the complex relationships and interactions within graphs, making them useful tools for understanding and analyzing structured data.

Despite the remarkable success of MPNNs in various real-world applications, they still encounter significant challenges with depth scalability and information bottleneck. One major issue is the over-smoothing phenomenon, which occurs when node features gradually converge and lose their uniqueness as the number of layers increases~\citep{li2018deeper}. This convergence not only diminishes the distinctiveness of node representation but also directly leads to a decline in  performance~\citep{chen2020measuring}. The severity of over-smoothing is often quantified by analyzing the \textit{Dirichlet energy} of the embedded graph data features~\citep{cai2020note,di2022graph}.

Another notable challenge is the over-squashing problem, originally identified by \cite{alon2020bottleneck}. This arises when node features are unable to effectively capture information from distant nodes, resulting in a loss of crucial contextual details. \citet{di2023over} delve deep into the underlying causes of over-squashing, examining various factors such as the graph topology and neural network architecture, particularly the depth and width. The severity of the over-squashing of a graph is closely related to the graph topology. A method to assess the degree of over-squashing in a GNN involves analyzing the number of non-zero elements within the convolution matrix. A further approach to examining over-squashing involves a comprehensive sensitivity analysis conducted on the \textit{Jacobian} of node features, as described in \cite{topping2022understanding}, which reveals that negatively curved edges contribute significantly to this over-squashing issue.

The stability problem is another prevalent issue in GNNs, where the presence of noisy or perturbed graph signals can adversely affect the performance of GNN models~\citep{jiang2023local}. Specifically, summation-based MPNNs fail to demonstrate satisfactory stability and transferability~\citep{yehudai2021local}. The process of feature aggregation in graph representation learning amplifies the negative impact of irregular entities on their neighborhoods and accumulates noisy signals during multi-layer propagation, ultimately leading to misleading embeddings. In fact, the generalization bound associated with this problem is directly proportional to the largest eigenvalue of the graph Laplacian~\citep{verma2019stability}. Therefore, employing spectral graph filters that are robust to certain structural perturbations becomes a necessary condition for achieving transferability in graph representation learning.

In this paper, our objective is to overcome the above obstacles by establishing a message passing framework operating within the spectral domain. Utilizing wavelet scattering, we can effectively capture long-range patterns in graphs, thereby mitigating the over-smoothing issue. Additionally, we draw parallels to the scattering transform to devise a layered message passing architecture. This neural network performs message passing on graphs with denser connectivity, addressing the problem of over-squashing. The meticulously designed DSMP neural network inherits the stability imparted by scattering wavelet transforms, which conserve energy. This approach empowers us to effectively capture and manipulate complex graph signal patterns, advancing the field of GNNs.

\begin{figure}[h]
    \centering
    \includegraphics[width=\textwidth]{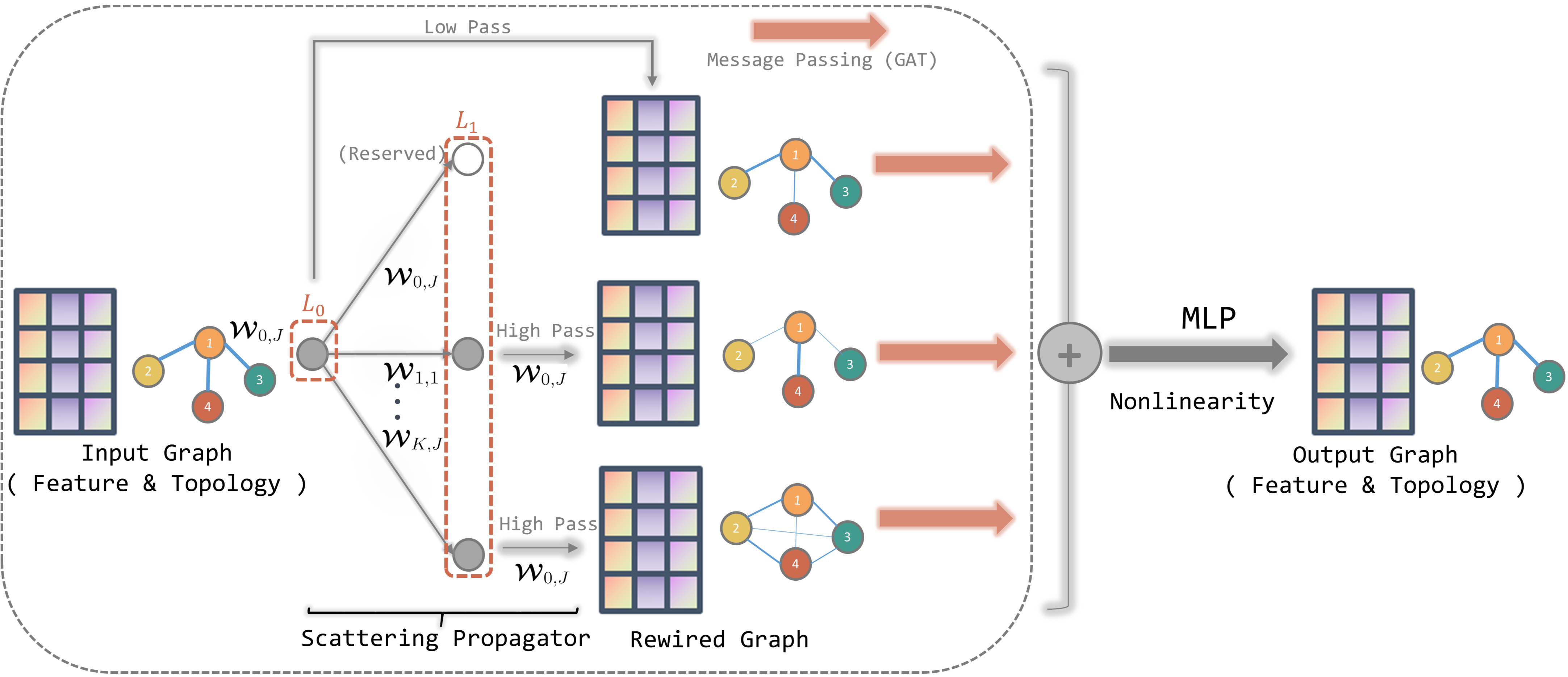}
    \caption{Architecture of the proposed scattering message passing (SMP) for graph signal embedding. In the initial scattering layer $L_0$, only the low pass coefficient is computed with framelet operator $\boldsymbol{\gW}_{0,J}$. In the following scattering layer $L_1$, the high-pass coefficients are computed with framelet filter $\boldsymbol{\gW}_{r,l}$ and cascaded with low pass filter $\boldsymbol{\gW}_{0,J}$ according to the scheme of scattering transformation. The low pass coefficient of the second scattering layer is reserved for higher-level scattering transformation. The coefficients are further processed with the MPNN which is taken to be the GAT model in this paper. After the aggregation operation, the multi-layer perceptrons (MLPs) with pointwise nonlinear activation functions are performed to increase the expressive power of the SMP model.  }
    \label{fig:smp_architecture}
\end{figure}

\section{Preliminaries}
In this section, we introduce some basic notation and properties on graphs, framelets, and scattering transform, which are based on the works of \cite{chung1997spectral,dong2017sparse,wang2019tight,bruna2013invariant,zou2020graph}. In accordance with Table \ref{app:notations} in the appendix, we give a list of notations used in this paper.

\subsection{Graphs, Function Space and Spectrum}
\label{sec:prelim1}
We consider an undirected attributed graph $\mathcal{G}=(\mathcal{V}, \mathcal{E}, \boldsymbol{X})$ with a non-empty finite set $\mathcal{V}$ of vertices, a set $\mathcal{E} \subseteq \mathcal{V} \times \mathcal{V}$ of edges between vertices in $\mathcal{V}$, and the node attributes $\boldsymbol{X} \in \mathbb{R}^{N \times d}$ encoding the feature dimensionality of each node, where $d$ denotes the dimension of features. $\boldsymbol{X}$ also denotes a feature matrix of graph signal. 
For an undirected graph $\mathcal{G}$, we denote $N:=|\mathcal{V}|$ and $M:=|\mathcal{E}|$ the numbers of vertices and edges. An edge $e \in \mathcal{E}$ with vertices $u, v \in \mathcal{V}$ is an unordered pair denoted by $(u, v)$ or $(v, u)$. The connectivity between $u$ and $v$ is denoted as $u \sim v$. We build the adjacency matrix $\mathcal{A}\in \mathbb{R}^{N \times N}$ of $\mathcal{G}$, and $\mathcal{A}$ is a mapping from $\mathcal{E}$ to $\mathcal{V} \times \mathcal{V}$ by letting 
$$
\mathcal{A}_{uv}=\left\{\begin{array}{cl}
1, &  u \sim v \\
0, & \text {otherwise}.
\end{array}\right.
$$
Note that for an undirected graph, the adjacency matrix $\mathcal{A}$ is symmetric in the sense that $\mathcal{A}(u, v)=\mathcal{A}(v, u), \forall u, v \in \mathcal{V}$. We denote the degree of a vertex $v \in \mathcal{V}$ by $\boldsymbol{d}(v):=\sum_{p \in \mathcal{V}} \mathcal{A}(v, p)$. The volume of the $\operatorname{graph} \operatorname{vol}(\mathcal{G}):=\operatorname{vol}(\mathcal{V})=\sum_{v \in \mathcal{V}} \boldsymbol{d}(v)=2M$, which is the sum of degrees of all vertices of $\mathcal{G}$. Given a subset $V_0$ of $\mathcal{V}$, the volume of $V_0$ is the sum of degrees of all nodes in $V_0$, that is, $\operatorname{vol}\left(\mathcal{V}_0\right):=\sum_{v \in \mathcal{V}_0} d(v)$.

Let $l_2(\mathcal{G}):=l_2\left(\mathcal{G},\langle\cdot, \cdot\rangle_{\mathcal{G}}\right)$ be the Hilbert space of vectors $\boldsymbol{f}: \mathcal{V} \rightarrow \mathbb{C}$ on the graph $\mathcal{G}$ equipped with the inner product
$$
\langle\boldsymbol{f}, \boldsymbol{g}\rangle_{\mathcal{G}}:=\sum_{v \in \mathcal{V}} \boldsymbol{f}(v) \overline{\boldsymbol{g}(v)}, \quad \boldsymbol{f}, \boldsymbol{g} \in l_2(\mathcal{G}),
$$
where $\bar{\boldsymbol{g}}$ is the complex conjugate to $\boldsymbol{g}$. The induced norm $\|\cdot\|_{\mathcal{G}}$ is then given by $\|\boldsymbol{f}\|_{\mathcal{G}}:=$ $\sqrt{\langle \boldsymbol{f}, \boldsymbol{f}\rangle_{\mathcal{G}}}$ for $\boldsymbol{f} \in l_2(\mathcal{G})$. For simplicity, we shall drop the subscript $\mathcal{G}$, and use $\langle\cdot, \cdot\rangle$ and $\|\cdot\|$ if no confusion arises.

Let $\delta_{\ell, \ell^{\prime}}$ be the Kronecker delta function satisfying $\delta_{\ell, \ell^{\prime}}=1$ if $\ell=\ell^{\prime}$, and $\delta_{\ell, \ell^{\prime}}=0$ if $\ell \neq \ell^{\prime}$. A finite subset $\left\{\boldsymbol{u}_{\ell}\right\}_{\ell=1}^N$ of $l_2(\mathcal{G})$ is said an orthonormal basis on $l_2(\mathcal{G})$ if
$
\left\langle\boldsymbol{u}_{\ell}, \boldsymbol{u}_{\ell^{\prime}}\right\rangle=\delta_{\ell, \ell^{\prime}}, 1 \leq \ell, \ell^{\prime} \leq N .
$
Let $\left\{\boldsymbol{u}_{\ell}\right\}_{\ell=1}^N$ be an orthonormal basis on $l_2(\mathcal{G})$. For $\ell=1, \ldots, N$, let
$
\widehat{\boldsymbol{f}}_{\ell}:=\left\langle \boldsymbol{f}, u_{\ell}\right\rangle
$
be the (generalized) Fourier coefficient of degree $\ell$ for $\boldsymbol{f} \in l_2(\mathcal{G})$ with respect to $\boldsymbol{u}_{\ell}$. 

Let $\widehat{\boldsymbol{f}}_{\ell}:=\left(\widehat{\boldsymbol{f}}_1, \ldots, \widehat{\boldsymbol{f}}_N\right) \in \mathbb{C}^N$ be the sequence of the Fourier coefficients for $\boldsymbol{f}$. Then, $\boldsymbol{f} = $ $\sum_{\ell=1}^N \widehat{\boldsymbol{f}}_{\ell} \boldsymbol{u}_{\ell}, \forall \boldsymbol{f} \in l_2(\mathcal{G})$, and Parseval's identity holds: $\|\boldsymbol{f}\|^2=\sum_{\ell=1}^N\left|\widehat{\boldsymbol{f}}_{\ell}\right|^2, \forall \boldsymbol{f} \in l_2(\mathcal{G})$. We say $\left\{\left(\boldsymbol{u}_{\ell}, \lambda_{\ell}\right)\right\}_{\ell=1}^N$ is an orthonormal eigen-pair for $l_2(\mathcal{G})$ if $\left\{\boldsymbol{u}_{\ell}\right\}_{\ell=1}^N$ is an orthonormal basis for $l_2(\mathcal{G})$ with $u_1 \equiv 1 / \sqrt{N}$ and $\left\{\lambda_{\ell}\right\}_{\ell=1}^N \subseteq \mathbb{R}$ is a nondecreasing sequence of nonnegative numbers satisfying $0=\lambda_1 \leq \cdots \leq \lambda_N$. A typical example is the eigen-pairs, that is, the set of all pairs of the eigenvectors and eigenvalues of the graph Laplacian on $\mathcal{G}$. 

We define the Laplacian for graphs $\mathcal{G}$ without loops and multiple edges. We first consider the unnormalized Laplacian matrix $L\in \mathbb{R}^{N \times N}$, also known as the combinatorial graph Laplacian, which is defined as follows:
$$
L_{uv}=\left\{\begin{array}{cl}
d_v, & u=v \\
-1, &  u \sim  v  \\
0, & \text{otherwise},
\end{array}\right.
$$
and let $\mathcal{D}$ the diagonal matrix with the $(v, v)$-th entry having value $d_v$. The Laplacian of $\mathcal{G}$ is defined to be the matrix
$$
\mathcal{L}_{uv}=\left\{\begin{array}{cl}
1, &  u=v, d_v \neq 0 \\
-\frac{1}{\sqrt{d_u d_v}}, &  u \sim v  \\
0, & \text{otherwise}.
\end{array}\right.
$$
We can write
$$
\mathcal{L}=\mathcal{D}^{-1 / 2} L \mathcal{D}^{-1 / 2},
$$
with the convention $\mathcal{D}^{-1}(v, v)=0$ for $d_v=0$. Since $\mathcal{L}$ is symmetric, its eigenvalues are all real and non-negative. The set of the eigenvalues is usually called
the spectrum of $\mathcal{L}$. The spectrum of a graph is closely related to its connectivity, and a larger value of the eigenvalue often indicates denser connectivity. For a graph $\mathcal{G}$ with $n$ vertices, there are some basic facts about the spectrum of a graph \citep{chung1997spectral}:
\begin{enumerate}[(i)] 
    \item The sum of the eigenvalues 
    $$
\sum_{i=1}^{N} \lambda_i \leq N
$$
with equality holding if and only if $\mathcal{G}$ has no isolated vertices.

\item If $\mathcal{G}$ is connected, then $\lambda_2>0$. If $\lambda_i=0$ and $\lambda_{i+1} \neq 0$, then $\mathcal{G}$ has exactly $i+1$ connected components.
\item  For all $i \leq N$, we have
$$
\lambda_i \leq 2
$$
with $\lambda_{N}=2$ if and only if a connected component of $\mathcal{G}$ is bipartite and nontrivial.
\item  The spectrum of a graph is the union of the spectra of its connected components.
\end{enumerate}
\subsection{Graph Framelet System}
\label{sec:graph_framelet_system}
In this section, we will present a concise introduction to the framelet system, tailored specifically for graph signals. Furthermore, we shall delve into the fast approximation method, focusing on the Chebyshev approximation for framelet decomposition and reconstruction operators. This method holds paramount importance in attaining computationally efficient outcomes.

A framelet, also called wavelet tight frame~\citep{dong2017sparse,wang2019tight,wang2020tight},  comprises two basic components: a \emph{filter bank} $\boldsymbol{\eta}:=\{a;b^{(1)},\dots,b^{(K)}\}$ and a set of \emph{scaling functions} $\Psi=\{\alpha;\beta^{(1)},\dots,\beta^{(K)}\}$. In this context, $a$ represents the low-pass filter, while $b^{(r)}$, where $r=1,\dots,K$, represents the $r$-th high-pass filter. These sets of filters extract both the approximate and detailed information of the input graph signal within the transformed domain, commonly referred to as the framelet domain.

Different choices of filter masks lead to distinct tight framelet systems~\citep{dong2017sparse}. In general, the selected filter bank and its associated scaling function must satisfy the following relationship:
\begin{equation} \label{eq:relations}
\widehat{\alpha}(2\xi)=\widehat{a}(\xi) \widehat{\alpha}(\xi),\quad
\widehat{\beta^{(r)}}(2 \xi)=\widehat{b^{(r)}}(\xi) \widehat{\alpha}(\xi),
\end{equation}
where $r=1, \ldots, K$ and $\xi \in \mathbb{R}$.
We give two typical examples of filters and scaling functions, as follows. 

The first one is the \emph{Haar-type} filters with one high pass, i.e., $K=1$. For $\xi\in\R$, it defines
\begin{equation*}
    \widehat{a}(\xi) = \cos(\xi/2)\mbox{~~and~~} \widehat{b^{(1)}}(\xi) = \sin(\xi/2),
\end{equation*}
with scaling functions 
\begin{equation*}
\widehat{\alpha}(\xi)=\frac{\sin (\xi / 2)}{\xi / 2}, \quad \widehat{\beta^{(1)}}(\xi)=\sqrt{1-\left(\frac{\sin (\xi / 2)}{\xi / 2}\right)^2} .
\end{equation*}
Another example of filters and scaling functions with two high passes is \citep{daubechies1992ten}:

$$
\begin{gathered}
\widehat{a}(\xi):= \begin{cases}1, & |\xi|<\frac{1}{8}, \\
\cos \left(\frac{\pi}{2} \nu(8|\xi|-1)\right), & \frac{1}{8} \leq|\xi| \leq \frac{1}{4}, \\
0, & \frac{1}{4}<|\xi| \leq \frac{1}{2},\end{cases} \\
\widehat{b^{(1)}}(\xi):= \begin{cases}0, & |\xi|<\frac{1}{8}, \\
\sin \left(\frac{\pi}{2} \nu(8|\xi|-1)\right), & \frac{1}{8} \leq|\xi| \leq \frac{1}{4}, \\
\cos \left(\frac{\pi}{2} \nu(4|\xi|-1)\right), & \frac{1}{4}<|\xi| \leq \frac{1}{2},\end{cases} \\
\widehat{b^{(2)}}(\xi):= \begin{cases}0, & |\xi|<\frac{1}{4}, \\
\sin \left(\frac{\pi}{2} \nu(4|\xi|-1)\right), & \frac{1}{4} \leq|\xi| \leq \frac{1}{2},\end{cases}
\end{gathered}
$$
where

$$
\nu(t):=t^4\left(35-84 t+70 t^2-20 t^3\right), \quad t \in \mathbb{R}
$$

The associated framelet generators $\Psi=\left\{\alpha ; \beta^1, \beta^2\right\}$ are defined by

$$
\begin{aligned}
& \widehat{\alpha}(\xi)= \begin{cases}1, & |\xi|<\frac{1}{4}, \\
\cos \left(\frac{\pi}{2} \nu(4|\xi|-1)\right), & \frac{1}{4} \leq|\xi| \leq \frac{1}{2}, \\
0, & \text { else },\end{cases} \\
& \widehat{\beta^1}(\xi)= \begin{cases}\sin \left(\frac{\pi}{2} \nu(4|\xi|-1)\right), & \frac{1}{4} \leq|\xi|<\frac{1}{2}, \\
\cos ^2\left(\frac{\pi}{2} \nu(2|\xi|-1)\right), & \frac{1}{2} \leq|\xi| \leq 1, \\
0, & \text { else, }\end{cases} \\
& \widehat{\beta^2}(\xi)= \begin{cases}0, & |\xi|<\frac{1}{2}, \\
\cos \left(\frac{\pi}{2} \nu(2|\xi|-1)\right) \sin \left(\frac{\pi}{2} \nu(2|\xi|-1)\right), & \frac{1}{2} \leq|\xi| \leq 1, \\
0, & \text { else, }\end{cases}
\end{aligned}
$$

For both practical implementation and computational efficiency, we consider the \emph{Haar-type} filters with a single high-pass filter.
By utilizing the defined filter bank and scaling functions, we can establish the undecimated framelet basis for a graph signal with the eigenpairs $\{(\lambda_\ell,\vu_\ell)\}_{\ell=1}^{N}$ of the graph Laplacian $L$. For each level $l=1,\dots,J$, the undecimated framelets corresponding to node $p$ are defined as follows:
\begin{equation}\label{eq:phi_psi}
  \begin{array}{l}
  \boldsymbol{\varphi}_{l,p}(v):=\sum_{\ell=1}^{N} \widehat{\alpha}\left(\frac{\lambda_{\ell}}{2^{l}}\right) \overline{\vu_{\ell}(p)} \vu_{\ell}(v), v\in\gV\\[2mm]
  \boldsymbol{\psi}_{l,p}^{(r)}(v):=\sum_{\ell=1}^{N} \widehat{\beta^{(r)}}\left(\frac{\lambda_{\ell}}{2^{l}}\right) \overline{\vu_{\ell}(p)} \vu_{\ell}(v), v\in\gV, \quad r=1, \ldots, K.
  \end{array}
\end{equation}
Here, $\boldsymbol{\varphi}_{l,p}(v)$ and $\boldsymbol{\psi}_{l,p}^{(r)}(v)$ represent the low-pass and the $r$-th high-pass framelet basis, respectively. 
\paragraph{Framelet Decomposition}
Next, we introduce the \emph{framelet decomposition operator} denoted as $\boldsymbol{\gW}$ for the tight framelet system~\citep{wang2019tight,wang2020tight,zheng2022decimated}, which is comprised of individual operators ${\boldsymbol{\gW}_{k,l}}\in \mathbb{R}^{N \times N}$ which we will specify shortly. In Section \ref{sec:scattering_on_graph}, we will utilize this operator to construct the SMP for the graph signal.

The framelet decomposition operator $\boldsymbol{\gW}_{k,l}$ consists of orthonormal bases for $(k,l) \in \{(0, J)\}\cup\{(1,1), \dots,(1, J), \dots(K, 1), \dots,(K, J)\}$. It transforms a given graph signal $\mX$ into a set of \emph{framelet coefficients} at multiple scales and levels. These coefficients represent the spectral representation of the graph signal $\mX$ in the transformed domain. Specifically, $\boldsymbol{\gW}_{0,J}$ includes $\boldsymbol{\varphi}_{J,p}$, where $p\in\gV$, and constructs the low-pass framelet coefficients $\boldsymbol{\gW}_{0, J}\mX$, preserving the approximate information in $\mX$. These coefficients provide a smooth representation that captures the global trend of $\mX$. On the other hand, the high-pass coefficients $\boldsymbol{\gW}_{r,l}\mX$, where $\boldsymbol{\gW}_{r,l}={\boldsymbol{\psi}_{l,p}^{(r)}, p\in\gV}$, capture detailed information at scale $r$ and level $l$. They reveal localized patterns or noise in the signal. A larger scale $k$ contains more localized information with smaller energy.

The framelet coefficients can be directly projected with $\langle\boldsymbol{\varphi}_{l,p},\mX\rangle$ and $\langle\boldsymbol{\psi}_{l,p}^{(r)},\mX\rangle$ for node $p$ at scale level $l$. For instance, if we perform eigendecomposition on a normalized graph Laplacian $\mathcal{L}$, resulting in $\mU=[\vu_1,\dots,\vu_N]\in\R^{N\times N}$ as the eigenvectors and $\Lambda=\operatorname{diag}(\lambda_1,\dots,\lambda_N)$ as the eigenvalues, we can define the corresponding filtered diagonal matrices with low-pass and high-pass filters as follows:
\begin{equation*}
\begin{aligned}
\widehat{\alpha}\left(\frac{\Lambda}{2}\right)&=\operatorname{diag}\left(\widehat{\alpha}\left(\frac{\lambda_1}{2}\right),\dots,\widehat{\alpha}\left(\frac{\lambda_n}{2}\right)\right), \
\widehat{\beta^{(r)}}\left(\frac{\Lambda}{2^l}\right)&=\operatorname{diag}\left(\widehat{\beta^{(r)}}\left(\frac{\lambda_1}{2^l}\right), \dots, \widehat{\beta^{(r)}}\left(\frac{\lambda_n}{2^l}\right)\right).
\end{aligned}
\end{equation*}

The associated framelet coefficients for the low-pass and the $r$-th high-pass signals are given by:
\begin{equation} 
\label{eq:ft_initial}
\begin{aligned}
&\boldsymbol{\gW}_{0,J}\mX=\mU \widehat{\alpha}\left(\frac{\Lambda}{2}\right) \mU^{\top} \mX , \
&\boldsymbol{\gW}_{r,l}\mX=\mU \widehat{\beta^{(r)}}\left(\frac{\Lambda}{2^{l}}\right) \mU^{\top} \mX,
\quad \forall l=1,\dots,J.
\end{aligned}
\end{equation}

\paragraph{Approximate Framelet Transformation}
To achieve an efficient framelet decomposition, we employ two strategies: a recursive formulation on filter matrices and the utilization of Chebyshev-approximated eigenvectors~\citep{wang2019tight,zheng2022decimated}.

Assuming a filter bank that satisfies \eqref{eq:relations}, we can implement the decomposition recursively as follows:
\begin{equation}
\begin{aligned}
\boldsymbol{\gW}_{r,1}\mX=\mU \widehat{\beta^{(r)}}\left(2^{-R}\Lambda\right) \mU^{\top} \mX,
\end{aligned}
\end{equation}
for the first level ($l=1$), and

\begin{equation}
\begin{aligned}
\boldsymbol{\gW}_{r,l}\mX&=\mU \widehat{\beta^{(r)}}\left(2^{-R+l-1}\Lambda\right)\widehat{\alpha}\left(2^{-R+l-2}\Lambda\right) \dots \widehat{\alpha}\left(2^{-R}\Lambda\right)
\mU^{\top} \mX \
&\quad \forall l=2,\dots,J,
\end{aligned}
\end{equation}
where the real-value dilation scale $R$ satisfies $\lambda_{\max} \leq 2^R\pi$. In
this definition, it is required for the finest scale $\frac{1}{2^{K+J}}$, that guarantees $\frac{\lambda_{\ell}}{2^{K+J-\ell}} \in (0,\pi), \forall \ell=1,2,\dots,J$.

Furthermore, we utilize an $r$-order Chebyshev polynomial approximation to achieve an efficient framelet decomposition. This approach avoids the eigendecomposition of the graph Laplacian, which can be time-consuming for large graphs.
Let the filters $\alpha$ and $\beta^{(r)}$ be approximated by $\gT_0$ and $\gT_{r}$, respectively, using $r$-order approximation. The framelet decomposition operator $\boldsymbol{\gW}_{r,l}$ is then approximated as follows:
\begin{equation}
\label{eq:chebyshev_approx} 
\boldsymbol{\gW}_{r,l}^{\natural}=
    \begin{cases}
    \gT_r\left(2^{-R} \mathcal{L}\right), & l=1, \\[1mm]
    \gT_r\left(2^{-R+l-1} \mathcal{L}\right) \gT_0\left(2^{-R+l-2} \mathcal{L}\right) \dots \gT_0\left(2^{-R} \mathcal{L}\right), & l=2,\dots,J.
    \end{cases}
\end{equation}
The above formulation can be efficiently computed using the Laplacian matrix $\mathcal{L}$, and the resulting transformation matrices exhibit a close relationship with the adjacency matrix $\mathcal{A}$.

 In a spectral-based graph convolution, graph signals are transformed to a set of coefficients $\hat{\boldsymbol{X}}=\boldsymbol{\gW} \boldsymbol{X}^{\text {in }}$ in frequency channels to learn useful graph representation. With $K$ high-pass filters, $\hat{\boldsymbol{X}}=\left\{\boldsymbol{\gW}_{0,J} \boldsymbol{X} ; \boldsymbol{\gW}_{r, l} \boldsymbol{X}: r=1, \ldots, K, l=1, \ldots, J\right\}$ constitutes of a low-pass coefficient matrix $\boldsymbol{\gW}_{0,J} \boldsymbol{X}$ and $K$ high-pass coefficient matrices $\boldsymbol{\gW}_{r, l} \boldsymbol{X}$ at $J$ scale levels. The framelet coefficient at node $i$ gathers information from its neighbors in the framelet domain of the same channel. Since the original (undecimated) framelet transforms involve integrals, we employ Chebyshev polynomial approximation for $\boldsymbol{\gW}_{r, l}$ (including $\boldsymbol{\gW}_{0,J}$), and obtain the corresponding approximated framelet transformation matrices $\boldsymbol{\gW}_{r, l}^{\natural}$ as products of Chebyshev polynomials of $A$. Let $m$ denote the highest order of the Chebyshev polynomial used. The aggregation of framelet coefficients for the neighborhood of node $i$ extends up to the $m$-th multi-hop $\mathcal{N}^{m}(i)$.

\subsection{Foundational Properties of the Graph Framelet Transform}
\label{sec:framelet_properties}
The Graph Framelet Transform, constructed from a filter bank $\boldsymbol{\eta}:=\{a;b^{(1)},\dots,b^{(K)}\}$ and scaling functions $\Psi=\{\alpha;\beta^{(1)},\dots,\beta^{(K)}\}$ as defined in Section \ref{sec:graph_framelet_system}, possesses several fundamental mathematical properties. These properties underpin its effectiveness for multi-scale graph signal analysis and are crucial for understanding its integration into neural network architectures like DSMP. The profound properties of this system, particularly its tight frame property, are the mathematical bedrock for the theoretical guarantees presented in this paper, including energy conservation, stability, and the mitigation of over-smoothing.

\subsubsection{Tight Frame and Orthogonality Property}
The most fundamental characteristic of the transform is that the collection of framelets forms a tight frame (a generalization of an orthogonal basis) for the graph signal space $l_2(\mathcal{G})$. This property is ensured by the filter design and the resulting framelet definitions.

\begin{itemize}
    \item \textbf{Mathematical Formulation}: The set of framelet basis functions $\{\boldsymbol{\varphi}_{J,p}\}_{p \in \mathcal{V}} \cup \{\boldsymbol{\psi}_{l,p}^{(r)}\}_{p \in \mathcal{V}; r=1,\dots,K; l=1,\dots,J}$ constitutes a tight frame. This means for any graph signal $\mX \in l_2(\mathcal{G})$, the following energy preservation (Parseval) identity holds:
    \begin{equation}
        \|\mX\|^2 = \sum_{p \in \mathcal{V}} |\langle \boldsymbol{\varphi}_{J,p}, \mX \rangle|^2 + \sum_{r=1}^{K} \sum_{l=1}^{J} \sum_{p \in \mathcal{V}} |\langle \boldsymbol{\psi}_{l,p}^{(r)}, \mX \rangle|^2.
        \label{eq:tight_frame_identity}
    \end{equation}
    Equivalently, in the matrix formulation using the framelet decomposition operators $\boldsymbol{\gW}$, this translates to the critical operator identity:
    \begin{equation}
        \boldsymbol{\gW}_{0,J}^\top \boldsymbol{\gW}_{0,J} + \sum_{r=1}^{K}\sum_{l=1}^{J} \boldsymbol{\gW}_{r,l}^\top \boldsymbol{\gW}_{r,l} = \mathbf{I},
        \label{eq:frame_operator_identity}
    \end{equation}
    where $\mathbf{I}$ is the identity matrix. This identity is the direct source of the energy conservation proven in Proposition 1 and is indispensable for the proof of Theorem 3.

    \item \textbf{Spectral Interpretation and Filter Condition}: The operator identity \eqref{eq:frame_operator_identity} stems from an equivalent condition in the spectral domain. Given the eigendecomposition $\mathcal{L} = \mU \Lambda \mU^\top$, the condition \eqref{eq:frame_operator_identity} is satisfied if and only if the filters fulfill the following condition for all eigenvalues $\lambda_\ell$:
    \begin{equation}
        \left| \widehat{\alpha}\left( \frac{\lambda_\ell}{2^J} \right) \right|^2 + \sum_{r=1}^{K} \sum_{j=1}^{J} \left| \widehat{\beta^{(r)}}\left( \frac{\lambda_\ell}{2^j} \right) \right|^2 = 1, \quad \forall \ell.
        \label{eq:unitary_extension_principle}
    \end{equation}
    Equation \eqref{eq:unitary_extension_principle} is the \textit{Unitary Extension Principle (UEP)} condition for graphs. It guarantees that the analysis and synthesis operations performed by the framelet transforms are adjoint and inverse to each other, forming the foundation for perfect reconstruction and stability.
\end{itemize}

\subsubsection{Perfect Reconstruction and Energy Conservation}
Directly derived from the tight frame property are two cornerstone features.

\begin{itemize}
    \item \textbf{Perfect Reconstruction}: Any graph signal can be losslessly synthesized from its framelet coefficients:
    \begin{equation}
        \mX = \boldsymbol{\gW}_{0,J}^\top (\boldsymbol{\gW}_{0,J}\mX) + \sum_{r=1}^{K}\sum_{l=1}^{J} \boldsymbol{\gW}_{r,l}^\top (\boldsymbol{\gW}_{r,l}\mX).
        \label{eq:perfect_reconstruction}
    \end{equation}
    This property ensures no information loss during the transformation, making it a stable and invertible feature extractor.

    \item \textbf{Energy Conservation}: A direct consequence of \eqref{eq:tight_frame_identity} is the preservation of the signal's $\ell_2$-norm (energy):
    \begin{equation}
        \|\mX\|^2 = \|\boldsymbol{\gW}_{0,J}\mX\|^2 + \sum_{r=1}^{K}\sum_{l=1}^{J} \|\boldsymbol{\gW}_{r,l}\mX\|^2.
        \label{eq:energy_conservation_framelet}
    \end{equation}
    This is the precise mathematical statement proven in Proposition 1, where the Dirichlet energy $E_{\mathcal{G}}(\mX) = \tr(\mX^\top \mathcal{L} \mX)$ is shown to be similarly decomposed. The boundedness of Dirichlet energy across DSMP layers (Theorem 3) fundamentally relies on this conservation property.
\end{itemize}

\subsubsection{Stability (Lipschitz Continuity)}
The orthonormal nature of the tight frame (as a system) guarantees that the framelet transform itself is a non-expansive operator.

\begin{itemize}
    \item \textbf{Bounded Norm}: For any signal $\mX$, it follows from \eqref{eq:energy_conservation_framelet} that $\|\boldsymbol{\gW}_{0,J}\mX\| \leq \|\mX\|$ and $\|\boldsymbol{\gW}_{r,l}\mX\| \leq \|\mX\|$. This bound is explicitly used in the proof of Lemma 2.
    \item \textbf{Stability to Input Perturbations}: Consequently, the transform is Lipschitz continuous with constant 1. For two signals $\mX$ and $\widetilde{\mX}$,
    \begin{equation}
        \|\boldsymbol{\gW}_{0,J}\mX - \boldsymbol{\gW}_{0,J}\widetilde{\mX}\| \leq \|\mX - \widetilde{\mX}\|, \quad \|\boldsymbol{\gW}_{r,l}\mX - \boldsymbol{\gW}_{r,l}\widetilde{\mX}\| \leq \|\mX - \widetilde{\mX}\|.
        \label{eq:framelet_lipschitz}
    \end{equation}
    This inherent stability of the framelet building blocks is a key factor that propagates through the architecture to ensure the overall stability of the DSMP model, as formalized in Theorem \ref{thm:stability}.
\end{itemize}

\subsubsection{Multiscale and Spectral-Spatial Localization}
Beyond orthogonality, the transform possesses structuring properties critical for graph analysis.

\begin{itemize}
    \item \textbf{Multiscale Analysis}: By employing a cascade of filters at dyadic scales $2^{-l}$, the transform decomposes a signal into a hierarchy of components. The low-pass coefficients $\boldsymbol{\gW}_{0,J}\mX$ capture smooth, global trends, while the high-pass coefficients $\boldsymbol{\gW}_{r,l}\mX$ capture localized details at scale $l$. This allows DSMP to aggregate information from both local neighborhoods and global contexts, directly addressing the over-squashing problem.
    \item \textbf{Spectral-Spatial Localization}: Each framelet $\boldsymbol{\psi}_{l,p}^{(r)}$ is approximately localized in both the spectral (frequency) domain and the spatial (graph vertex) domain. Spectrally, it acts as a band-pass filter. Spatially, its energy is concentrated around the center node $p$ within a hop distance related to scale $l$. This dual localization enables efficient and interpretable feature extraction.
\end{itemize}

In summary, the tight frame property, along with the consequent energy conservation and stability, forms the indispensable mathematical foundation. These properties are not merely incidental but are actively leveraged in the proofs of Proposition 1 and Theorems 3 and 5 to establish the DSMP model's robustness against over-smoothing, over-squashing, and input perturbations.

\subsection{Graph Scattering Transform}
\label{sec:scattering_on_graph}
The scattering transform is a multi-scale signal analysis method that provides stable representations using framelets. The graph scattering transform extends this method to analyze graph signals. A scattering transform can be constructed by cascading three building blocks: a collection of paths containing the ordering of the scales of framelets, a pointwise modulus activation function, and a low-pass framelet operator. Following the same framework, given a graph $\mathcal{G}$ and input feature matrix $\mX\in l^2(\mathcal{G})$, we define an analogous graph scattering transformation $\Phi_{\mathcal{G}}(\mX)$ by cascading three building blocks: a collection of paths $\mathbf{p}=\left\{(1,1), \dots,(1, J), \dots,(K, J)\right\}^{m-1}$, each containing the high-pass components of framelet decomposition operators $\boldsymbol{\gW}_{r,l}, \; 1\leq r \leq K,\; 1\leq l\leq J,$ where $m$ is the depth of the transformation; a pointwise absolute value function $\hat{\sigma}$; and a low-pass component  $\boldsymbol{\gW}_{0,J}$. More specifically, the first layer of scattering transformation can be formulated as
\begin{equation}
\label{eq:smp_1st}
\phi_1(\mathcal{G}, \mX)=\boldsymbol{\gW}_{0,J}\mX,
\end{equation}
followed by the second layer 
\begin{equation}
\label{eq:smp_2nd}
\phi_2(\mathcal{G}, \mX)=\boldsymbol{\gW}_{0,J}\hat{\sigma} (\boldsymbol{\gW} \mX)=\left\{\boldsymbol{\gW}_{0,J}\sigma (\boldsymbol{\gW}_{r,l} \mX)\right\}_{{1 \leq r \leq K},{1 \leq l \leq J}},
\end{equation}
and the propagation on the $m$-th layer reads
\begin{equation}
\label{eq:smp_mth}
\phi_m(\mathcal{G}, \mX)=\boldsymbol{\gW}_{0,J}\underbrace{\hat{\sigma}( \boldsymbol{\gW} \cdots \hat{\sigma} (\boldsymbol{\gW} \mX)) }_{(m-1)*\gW}  =\left\{\boldsymbol{\gW}_{0,J} (\hat{\sigma} ( \boldsymbol{\gW}_{r,l} \cdots \hat{\sigma} (\boldsymbol{\gW}_{r,l} \mX))\right\}_{{1 \leq r \leq K},{1 \leq l \leq J}}.
\end{equation}
The representation obtained by concatenating output features from $m$ layers of such transformation is thus
\begin{equation}
\Phi_{\mathcal{G}}(\mX)=\left\{\phi_1(\mathcal{G}, \mX),\phi_2(\mathcal{G}, \mX), \ldots, \phi_{m-1}(\mathcal{G}, \mX)\right\}.
\label{eq:scatter_prop}
\end{equation}

\subsection{Graph Neural Networks and Message Passing}
\label{sec:gnn_with_os}
 Given a graph with adjacency matrix $\mathcal{A}$ and input feature matrix $\boldsymbol{X}^{\text {in }}$, a graph convolution produces a matrix representation $\boldsymbol{X}^{\text {out }}$. Let$\boldsymbol{X}_{i}^\text{in}$ denote the $i$-th row of $\boldsymbol{X}^\text{in}$, that is, the feature for node $i$. At a specific layer, the propagation for the $i$-th node reads
\begin{equation}
\label{eq:mpnn}
    \boldsymbol{X}_{i}^{\text {out }}=\Gamma\left(\boldsymbol{X}_{i}^{\text {in }}, \square_{j \in \mathcal{N}(i)} \phi\left(\boldsymbol{X}_{i}^{\text {in }}, \boldsymbol{X}_{j}^{\text {in }}, \mathcal{A}_{i j}\right)\right),
\end{equation}
where $\square(\cdot)$ is a differentiable and permutation invariant aggregation function, such as summation, average, or maximization. The set $\mathcal{N}(i)$ includes the $i$-th node and its 1-hop neighbors. Both $\Gamma(\cdot)$ and $\phi(\cdot)$ are MLPs. 
\paragraph{Over-smoothing}
The over-smoothing problem commonly exists in GNN models with message passing schemes, which makes it difficult to achieve optimal performance and to implement in real-world applications. The proposed DSMP framework is theoretically proved to solve the over-smoothing problem in the sense of \textit{Dirichlet energy}. The \textit{Dirichlet energy} quantifies the amount of variation or oscillation in graph signal values across nodes.

The \textit{Dirichlet energy} of graph $\mathcal{G}$ is defined as follows:
\begin{equation}
\label{eq:dirichlet_energy}
E_{\mathcal{G}}(\mX) = \frac{1}{2} \sum_{i=1}^{N} \sum_{j=1}^{N} \mathcal{A}_{ij} \left(\frac{\mX_i}{\sqrt{d_i}} - \frac{\mX_j}{\sqrt{d_j}}\right)^2,
\end{equation}
where $\mathcal{A}_{ij}$ represents the connection strength between nodes $i$ and $j$ in the adjacency matrix. The \textit{Dirichlet energy} measures the squared differences between connected node signal values, weighted by the adjacency matrix. A higher \textit{Dirichlet energy} indicates greater signal variation or less smoothness, while a lower value indicates smoother signal behavior.
\paragraph{Over-squashing}
According to the recent theory on over-squashing of MPNN \citep{di2023over}, the graph structure plays a pivotal role in the quality of signal propagation across nodes. 
A classic measure of "bottlenecked-ness" of a graph structure is the Cheeger constant \citep{chung1997spectral}
$$
h(\mathcal{G}):=\min _{\varnothing \neq \mathcal{S} \subset \mathcal{V}} \frac{|\partial \mathcal{S}|}{\min (\operatorname{vol} \mathcal{S}, \operatorname{vol} \overline{\mathcal{S}})},
$$
where $\operatorname{vol} \mathcal{S}=\sum_{v \in \mathcal{S}} d_v, \overline{\mathcal{S}}=\mathcal{V} \backslash \mathcal{S}$ and $\partial \mathcal{S}=\{(u, v): u \in \mathcal{S}, v \in \overline{\mathcal{S}},(u, v) \in \mathcal{E}\}$ is the edge boundary of $\mathcal{S} \subset \mathcal{V}$. This definition considers the connectivity between two complementary subsets of nodes in terms of the number of connecting edges and their volume, measured by the sum of the degrees of the nodes. A graph with two tightly connected communities ( $S$ and $\mathcal{V} \backslash \mathcal{S}$) and few
inter-community edges has a small Cheeger constant. Intuitively, a large value of $h(\mathcal{G})$ indicates that the graph has no significant bottlenecks, meaning that each part of the graph is well-connected to the rest through a significant fraction of its edges. A higher $h(\mathcal{G})$ implies more connections and less over-squashing on the graph scale.

In general, computing the exact value of $h(\mathcal{G})$ is a computationally challenging task. However, it is possible to provide lower and upper bounds for the Cheeger constant in terms of the spectral gap. As mentioned in Section \ref{sec:prelim1}, the spectral gap of a graph is defined as the difference $\lambda_2 - \lambda_1$, where $\lambda_1 = 0$.
The discrete Cheeger inequality \citep{cheeger1970lower,alon1984eigenvalues} establishes a relationship between the spectral gap of $\mathcal{G}$ and its Cheeger constant:
\begin{equation}
\label{eq:cheeger}
\frac{\lambda_2}{2} \leq h(\mathcal{G}) \leq \sqrt{2 \lambda_2}.
\end{equation}
The discrete Cheeger inequality provides us with the insight that the spectral gap is non-zero if and only if $h(\mathcal{G})$ is non-zero. Consequently, a graph $\mathcal{G}$ is considered connected if and only if $\lambda_2 > 0$. As the spectral gap can be efficiently computed, we utilize it as a measure of graph bottlenecking, serving as a proxy for the Cheeger constant. Thus, the Cheeger constant is closely related to the over-squashing issue in GNNs.

\section{Related works}
\subsection{Message Passing Neural Network}
MPNN~\citep{gilmer2017neural} has become a computational paradigm for GNN models to learn from graph-structured data. In one message-passing step, each node of the graph updates its state by aggregating messages flowing from its direct neighbors. Different GNN layers can be derived by varying the specific aggregation operation. For instance, the graph convolutional network (GCN)~\citep{kipf2017semi} adds up the degree-normalized node attributes from neighbors (including itself).
The graph attention network (GAT)~\citep{velivckovic2017graph}  and other variants~\citep{xie2020mgat,brody2021attentive} refine the aggregation weights by utilizing the attention mechanism. The expressive power of MPNNs has been verified by the Weisfeiler-Leman graph isomorphism test~\citep{xu2018powerful,morris2019weisfeiler,bodnar2021weisfeiler}, based on which the graph isomorphism network (GIN) has been proposed to become more powerful in expressivity.

\subsection{Over-smoothness and Over-squashing in MPNN}
The over-smoothing issue was initially identified by~\cite{li2018deeper}, who observed that the output embeddings of different nodes tend to converge to
similar vectors after going through multiple layers of signal
smoothing operations. This issue has been a focal point in the study of GNNs and poses a challenge to the performance of deep GNNs.
On the other hand, using only a small number of message-passing operations limits the expressivity of the model. When layers are deeply stacked, the embeddings of connected nodes tend to converge, becoming indistinguishable. Some~\citep{Sohir} generalized the concept of over-smoothing, which has traditionally been studied in undirected graphs, to directed graphs. They also extended the Dirichlet energy by introducing a novel directed symmetrically normalized Laplacian. To address the over-smoothing issue, they proposed fractional graph Laplacian Neural Ordinary Differential Equations (Neural ODEs), which offer a flexible framework for regulating signal propagation and preserving the distinctiveness of node features.

Another commonly investigated issue is the over-squashing problem. Such phenomenon has been explored in relation to graph curvature, based on which SDRF~\citep{topping2022understanding} and BORF~\citep{nguyen2023revisiting} proposed graph rewiring methods to address the over-squashing problem. FoSR~\citep{karhadkar2022fosr} proposed a rewiring strategy based on spectral expansion theory~\citep{banerjee2022oversquashing}, aiming to rewire the graph and obtain a larger spectral gap.
Different from rewiring the input graph, graph transformer models such as GraphTrans~\citep{wu2021representing}, Graphormer~\citep{ying2021transformers}, GPS~\citep{rampavsek2022recipe}, and U2GNN~\citep{nguyen2022universal} took a special approach to solve the over-squashing issue. These models implicitly learn on a fully connected graph inspired by the Transformer architecture~\citep{vaswani2017attention}.
Using the partial derivative between pairs of graph vertices, \citep{di2023over} showed different perspective to analyze the over-squashing problem from the graph topology, depth of message passing scheme, and number of neurons, based on which more efficient MPNNs could be designed to alleviate the over-squashing problem. One straightforward solution involves increasing the number of message passing layers, yet this strategy can potentially exacerbate the over-smoothing issue and amplify the risk of vanishing gradients. Alternatively, widening the networks can alleviate the over-squashing problem, but this may simultaneously introduce instability concerns.

\subsection{Spectral Methods on GNNs and Scattering Transform}
In recent years, spectral-based graph convolutions have shown promising performance in transferring trained graph convolutions across different graphs, indicating their transferability and generalizability~\citep{levie2019transferability,maskey2022generalization}. Furthermore, the stability of spectral-based methods in the face of input graph perturbations has been emphasized in various studies~\citep{gama2020stability,ruiz2021graph}.
One notable work by \citet{zheng2021framelets} extracts multi-scale features from graph data and integrates the framelet coefficients with a learnable GNN model, achieving good results in various graph-related tasks. 

Scattering methods, inspired by the layered architecture of framelet transformations, have been used to emulate deep neural networks and serve as powerful feature extractors that capture multi-scale group invariant features from data~\citep{mallat2012group}. The scattering transform has also been employed to analyze and interpret convolutional neural networks~\citep{bruna2013invariant}. Similarly to the Euclidean scattering transform, the graph scattering transform applies a set of framelet filters to graph signal\citep{gama2019stability,gao2019geometric}.

In recent years, prominent researchers including \citep{zou2020graph, zou2019encoding, gama2018diffusion, gama2019stability} have successfully extended the scattering transform to graph data. By leveraging graph framelets in both spectral and spatial domains, these extensions have generated stable and equivariant feature representations for instances and their intricate relationships. Furthermore, efforts have been made to integrate geometric information with graph scattering, leading to enhanced methods for graph analysis~\citep{gao2019geometric, chew2022manifold, xiong2023anisotropic}.

Despite the utilization of predefined framelets in scattering transforms, it is feasible to construct learnable filters within this framework. Such a new approach was initially implemented in the Euclidean domain~\citep{oyallon2017scaling, gauthier2022parametric} and more recently in the graph domain~\citep{min2020scattering, zhang2022hierarchical}. More recently, \cite{zhang2022hierarchical} employed diffusion wavelet techniques to develop a hierarchical diffusion scattering (HDS) network, seamlessly integrating it with GCN (HDS-GCN) and GAT (HDS-GAT) models. These hierarchical scattering features significantly enhance the performance of GNN models, further broadening their capabilities and scope.

\section{Deep Scattering Message Passing}
In this section, we introduce a novel graph embedding neural network called the DSMP(Deep Scattering Message Passing) model, designed to enhance the learning capability of scattering transform. The foundational component is the SMP module, which enables the scalability of the DSMP model. The main obstacle to scalability for common GNN models is the over-smoothing problem, which is theoretically proved to be circumvented by the proposed model. We also prove that the newly designed DSMP can alleviate the over-squashing of message passing. We also prove the stability of the DSMP.

\subsection{Framelet Scattering Transform}
We utilize framelets to construct the graph scattering transform, which we refer to as the \emph{framelet scattering transform}. The detailed introduction and efficient computation of framelet decomposition operators $\boldsymbol{\gW}_{r,l}$ are provided in section \ref{sec:graph_framelet_system}.


In Figure~\ref{fig:smp_architecture}, we illustrate a two-layer scattering propagation process using the framelet transformation matrices, where the pointwise activation function $\sigma$ is moved outside. These framelet transformations provide a weighted aggregation scheme for graph features, which can be regarded as weighted adjacency matrices for message passing and also rewired graph topology as the level of framelet transformation $\ell \geq 2$. The low pass coefficient is directly computed with framelet operator $\boldsymbol{\gW}_{0,J}$ in the initial scattering layer. When the scattering layer increases, the low pass coefficient of the following scattering layer is reserved for higher-level scattering transformation. 

\subsection{Deep Scattering Message Passing Model}
The scattering transform is a multi-scale feature extraction method without learnable parameters, which entangles its learning ability in a data-driven paradigm. Using message passing scheme, we propose the SMP framework to enable scattering transform learning ability.
The DSMP can be constructed with multi-layer SMP modules. For the update of graph node feature $\boldsymbol{X}^{(t)}\in \mathbb{R}^{N \times d}$ at layer $t$, we define the following diffusion propagation process:
\begin{equation}
    \boldsymbol{X}^{(t)}=\boldsymbol{X}^{(t-1)}+\boldsymbol{Z}^{(t)}.
\end{equation}
Below, we define one-step propagation of SMP with a depth of 2:
\begin{equation}
    \begin{aligned}
        \boldsymbol{X}_{j}^{h_{r,l}} &= \sum_{p \in \mathcal{N}(j)}\boldsymbol{\gW}_{r,l}^{p,j} \boldsymbol{X}_p^{(t-1)},\\
        \boldsymbol{Z}_{i}^{(t)} &= \Gamma\left(\sum_{j \in \mathcal{N}(i)}  \boldsymbol{\gW}_{0,J}^{i,j} \boldsymbol{X}_{j}^{(t-1)}\boldsymbol{\theta}_0+  \sum_{r=1}^K\sum_{l=1}^J\boldsymbol{\gW}_{0,J}^{i,j}\boldsymbol{X}_{j}^{h_{r,l}}\boldsymbol{\theta}_{r,l} \right),
    \end{aligned}
    \label{eq:smpnn}
\end{equation}
where the first equation denotes the high-pass coefficients of node $j$ extracted by transformation matrix $\boldsymbol{\gW}_{r,l}\in \mathbb{R}^{N \times N}$. The message passing process specifically aggregates neighboring nodes from $\mathcal{N}(j)$, where the adjacency is determined by the non-zero terms of $\boldsymbol{\gW}_{r,l}$. In the case of the framelet transformation matrix $\boldsymbol{\gW}_{r,l}$, if the term in the $p$-th row and $j$-th column is non-zero, denoted as $\boldsymbol{\gW}_{r,l}^{p,j}\neq 0$, it will influence the aggregation process. To adjust for different embeddings, $\boldsymbol{\theta}_0\in \mathbb{R}^{d \times h}$ and $\boldsymbol{\theta}_{r,l}\in \mathbb{R}^{d \times h}$ are introduced as learnable weighting parameters with hidden dimension size of $h$.

The propagation scheme described above adheres to the fundamental principle of feature aggregation in message passing. For simplicity on notation, we give the matrix formulation as
\begin{equation}
    \begin{aligned}
        &\boldsymbol{X}^{(t)}=\boldsymbol{X}^{(t-1)}+\boldsymbol{Z}^{(t)}, \\
        &\boldsymbol{Z}^{(t)}=\Gamma\left( \boldsymbol{\gW}_{0,J}\boldsymbol{X}^{(t-1)}\boldsymbol{\theta}_0+\sum_{r=1}^K\sum_{l=1}^J\boldsymbol{\gW}_{0,J}\boldsymbol{\gW}_{r,l}\boldsymbol{X}^{(t-1)}\boldsymbol{\theta}_{r,l} \right).
    \end{aligned}
    \label{eq:matrix_form}
\end{equation}
As shown in \eqref{eq:matrix_form}, the first term corresponds to low pass framelet coefficients weighted by the learnable parameter $\boldsymbol{\theta}_0$ from the GAT model. The other part consists of high-pass framelet coefficients multiplied with low-pass transformation matrix $\boldsymbol{\gW}_{0,J}$, which is designed following the architecture of two layers of scattering transform. 

Following aggregation and weighting in the spectral domain, the MLPs are applied to increase the expressive power of the SMP model. For the sake of notational simplicity, we represent the MLPs, denoted as $\Gamma$, with the parameter matrix $\mathbf{P}$ and nonlinear activation function ReLU~\citep{krizhevsky2017imagenet} $\sigma(\cdot)$. Specifically, for an input signal $\boldsymbol{X}$, $\Gamma (\boldsymbol{X})=\sigma(\mathbf{P}\boldsymbol{X})$. Worth noting is that the enhancement of the traditional pointwise modulus activation function is used in scattering transformations. This improvement has been achieved by incorporating the pointwise ReLU function within the MLPs. Importantly, this modification maintains and indeed enhances the integrity of the underlying scattering transformation design. Specifically, the ReLU function fulfills a role analogous to $\sigma(\cdot)$, contributing to the model's nonlinear approximation power. As illustrated in Figure~\ref{fig:dsmp_architecture}, the DSMP model is constructed by cascading multiple SMP modules to expand the scale of this deep learning model.
\begin{figure}
    \centering
    \includegraphics[width=\textwidth]{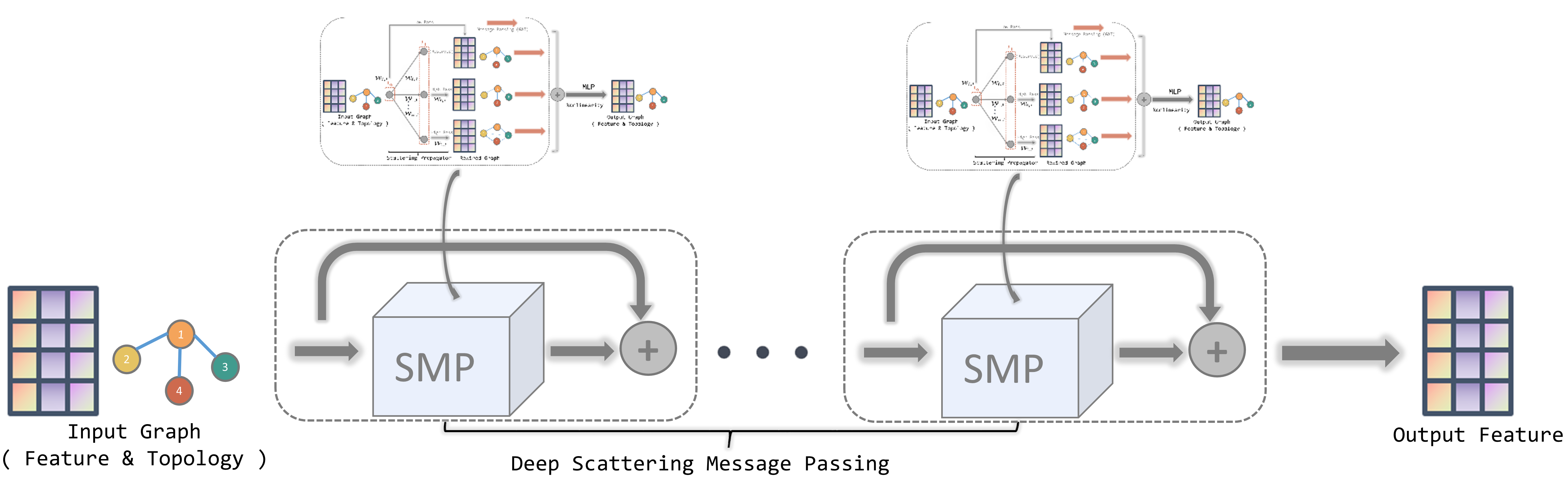}
    \caption{Overall architecture of the DSMP scheme. Following the propagation process of \eqref{eq:matrix_form}, the DSMP is constructed by cascading multiple SMP modules.}
    \label{fig:dsmp_architecture}
\end{figure}

\subsection{Limiting Over-smoothing in DSMP}
\label{sec:over-smoothing}
We utilize the \textit{Dirichlet energy} as a measure of the level of over-smoothness in message passing models. Firstly, we establish the energy conservation property of the framelet transformation. Subsequently, we provide proof demonstrating how the DSMP model effectively constrains the graph embedding.

\begin{proposition}[Energy conservation] Let $\mX$ be a feature matrix from the graph signal $\mathcal{G}$ and $\left\{\boldsymbol{\gW}_{0,0}; \boldsymbol{\gW}_{r, l}: r=1, \ldots, K, l=1, \ldots, J\right\}$ be the framelet transformation matrix. Then, 
    \begin{equation}\label{eq:DE preserve}
    E_{\mathcal{G}}(\mX) = E_{\mathcal{G}}(\boldsymbol{\gW}_{0,J}\mX)+\sum_{r=1}^{K}\sum_{l=1}^{J} E_{\mathcal{G}}(\boldsymbol{\gW}_{r,l}\mX),
\end{equation}
where framelet transformation breaks the total energy $E_{\mathcal{G}}(\mX)$ into low pass energy $E_{\mathcal{G}}(\boldsymbol{\gW}_{0,J}\mX)$ and high-pass energy $E_{\mathcal{G}}(\boldsymbol{\gW}_{r,l}\mX)$.
\end{proposition}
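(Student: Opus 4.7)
The plan is to translate everything into the graph-Fourier basis of the Laplacian eigenpairs $\{(\lambda_\ell,\vu_\ell)\}_{\ell=1}^N$, in which each framelet operator $\boldsymbol{\gW}_{0,J}$ and $\boldsymbol{\gW}_{r,l}$ acts as a diagonal spectral multiplier, and then collapse the resulting weighted spectral sum using the tight-frame identity built into the framelet filter bank. This is the natural analogue, for the Dirichlet quadratic form, of the standard proof that a Parseval frame preserves the $L^2$ norm.

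First I would rewrite the Dirichlet energy spectrally. The elementary manipulation $\frac12\sum_{i,j}\mathcal{A}_{ij}(\mX_i-\mX_j)^2 = \mX^\top L \mX$, together with the eigendecomposition $L=\sum_\ell \lambda_\ell\,\vu_\ell\vu_\ell^\top$, yields $E_{\mathcal{G}}(\mX) = \sum_\ell \lambda_\ell\,|\widehat{\mX}_\ell|^2$, where $\widehat{\mX}_\ell := \vu_\ell^\top\mX$. Next, by (2.2) the framelet operators admit the spectral forms $\boldsymbol{\gW}_{0,J}=\mU\,\widehat{\alpha}(\Lambda/2^J)\,\mU^\top$ and $\boldsymbol{\gW}_{r,l}=\mU\,\widehat{\beta^{(r)}}(\Lambda/2^l)\,\mU^\top$, so the Fourier coefficients of the filtered signals are $\widehat{\alpha}(\lambda_\ell/2^J)\widehat{\mX}_\ell$ and $\widehat{\beta^{(r)}}(\lambda_\ell/2^l)\widehat{\mX}_\ell$. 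Inserting these into the spectral form of $E_{\mathcal{G}}$ and summing the high-pass contributions over all channels gives
\begin{equation*}
E_{\mathcal{G}}(\boldsymbol{\gW}_{0,J}\mX)+\sum_{r=1}^K\sum_{l=1}^J E_{\mathcal{G}}(\boldsymbol{\gW}_{r,l}\mX) = \sum_{\ell=1}^N \lambda_\ell\Bigl[\,|\widehat{\alpha}(\lambda_\ell/2^J)|^2 + \sum_{r=1}^K\sum_{l=1}^J|\widehat{\beta^{(r)}}(\lambda_\ell/2^l)|^2\Bigr]|\widehat{\mX}_\ell|^2.
\end{equation*}

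Matching this against $E_{\mathcal{G}}(\mX)=\sum_\ell \lambda_\ell|\widehat{\mX}_\ell|^2$ reduces the whole proposition to the tight-frame identity $|\widehat{\alpha}(\lambda_\ell/2^J)|^2 + \sum_{r,l}|\widehat{\beta^{(r)}}(\lambda_\ell/2^l)|^2 = 1$ at every eigenvalue $\lambda_\ell$, which is precisely the Parseval property of the undecimated framelet system. I expect the verification of this identity to be the main obstacle: in principle it follows from the refinement relations of (2.1) together with the unitary-extension condition $|\widehat{a}(\eta)|^2+\sum_r|\widehat{b^{(r)}}(\eta)|^2=1$ on the filter bank via the one-scale recursion $|\widehat{\alpha}(\xi/2)|^2 = |\widehat{\alpha}(\xi)|^2 + \sum_r|\widehat{\beta^{(r)}}(\xi)|^2$ applied iteratively for $l=1,\dots,J$, but some care is required in bookkeeping the scale indices so that the telescoping range matches the summation range in the proposition, and in checking that the identity holds pointwise on the discrete spectrum $\{\lambda_\ell\}\subset[0,\lambda_{\max}]$ under the dilation-scale assumption $\lambda_{\max}\le 2^R\pi$ from Section \ref{sec:graph_framelet_system}. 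Once the identity is in hand, substituting it into the previous display produces (3) immediately, and the proof is complete.
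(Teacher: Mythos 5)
Your proposal is correct and follows essentially the same route as the paper's proof: both diagonalize the Dirichlet energy in the Laplacian eigenbasis (the paper writes $E_{\mathcal{G}}(\mX)=\tr(\mX^\top\mathcal{L}\mX)$ with $\mathcal{L}=\mU\Lambda\mU^\top$, which is just the trace form of your spectral sum $\sum_\ell\lambda_\ell|\widehat{\mX}_\ell|^2$) and then collapse the weighted sum using the tight-frame partition of unity $|\widehat{\alpha}(\lambda_\ell/2)|^2+\sum_{r,l}|\widehat{\beta^{(r)}}(\lambda_\ell/2^l)|^2=1$, which the paper likewise invokes from the tightness conditions rather than re-deriving from the refinement relations. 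The only caveat is a scale-index slip: the low-pass multiplier should be evaluated at $\lambda_\ell/2$ to match the paper's definition of $\boldsymbol{\gW}_{0,J}$ in its equation for the framelet coefficients, not $\lambda_\ell/2^J$, though the paper's own notation is not fully consistent on this point either.
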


\begin{proof}
By the definition of \textit{Dirichlet energy}, we have $E_{\mathcal{G}}(\mX) = \tr(\mX^\top\mathcal{L}\mX)$ and  $\mathcal{L} = \mU \Lambda \mU^{\top}$. Then, according to \eqref{eq:ft_initial}, we have   
\begin{equation*}  
    E_{\mathcal{G}}(\boldsymbol{\gW}_{0,J}\mX) = \tr\left(\mX^{\top}\boldsymbol{\gW}_{0,J}^{\top}\mathcal{L}\boldsymbol{\gW}_{0,J}\mX\right)= \tr\left(\mX^{\top}\mU \widehat{\alpha}\left(\frac{\Lambda}{2}\right)^2 \Lambda \mU^{\top} \mX \right) ,
\end{equation*}
and $\forall l=1,\dots,J, $ 
\begin{equation}
    E_{\mathcal{G}}(\boldsymbol{\gW}_{r,l}\mX) = \tr\left(\mX^{\top}\boldsymbol{\gW}_{0,J}^{\top}\mathcal{L}\boldsymbol{\gW}_{r,l}\mX\right)= \tr\left(\mX^{\top}\mU \widehat{\beta^{(r)}}\left(\frac{\Lambda}{2^{l}}\right)^2 \Lambda \mU^{\top} \mX \right).
    \label{eqn:energy}
\end{equation}
Since $\Lambda=\operatorname{diag}(\lambda_1,\dots,\lambda_N)$ and the tightness of Framelet system requires the functions in $\Psi$ satisfy
\begin{align}
   1 = \left|\FT{\scala}\left(\frac{\eigvm}{2^{J}}\right)\right|^{2} + \sum_{r=1}^{K}\left|\FT{\scalb^{(r)}}\left(\frac{\eigvm}{2^{J}}\right)\right|^{2} &\quad \forall
  \ell=1,\ldots,\NV, \label{thmeq:nrm:alpha:beta}\\
     \left|\FT{\scala}\left(\frac{\eigvm}{2^{l}}\right)\right|^{2}
    = \left|\FT{\scala}\left(\frac{\eigvm}{2^{l}}\right)\right|^{2} + \sum_{r=1}^{K}\left|\FT{\scalb^{(r)}}\left(\frac{\eigvm}{2^{l}}\right)\right|^{2} &\quad \forall
 \begin{array}{l}
 \ell=1,\ldots,\NV,\\
 l=1,\ldots,J-1,
 \end{array}\label{thmeq:2scale:alpha:beta}
 \end{align}
we thus have 
\begin{equation}
    \widehat{\alpha}\left(\frac{\Lambda}{2}\right)^2 + \sum_{r=1}^K\sum_{l=1}^J \widehat{\beta^{(r)}}\left(\frac{\Lambda}{2^{l}}\right)^2 = \mI.
    \label{eqn:partition2}
\end{equation}
Therefore, combining \eqref{eqn:energy} and \eqref{eqn:partition2}, we obtain that
\begin{equation*}
\begin{aligned}
    & E_{\mathcal{G}}(\boldsymbol{\gW}_{0,J}\mX)+\sum_{r=1}^{K}\sum_{l=1}^{J} E_{\mathcal{G}}(\boldsymbol{\gW}_{r,l}\mX) \\
    =& \tr\left(\mX^{\top}\mU \widehat{\alpha}\left(\frac{\Lambda}{2}\right)^2 \Lambda \mU^{\top} \mX \right) +  \sum_{r=1}^{K}\sum_{l=1}^{J} \tr\left(\mX^{\top}\mU \widehat{\beta^{(r)}}\left(\frac{\Lambda}{2^{l}}\right)^2 \Lambda \mU^{\top} \mX \right) \\
    =& \tr\left(\mX^{\top}\mU \left( \widehat{\alpha}\left(\frac{\Lambda}{2} \right)^2 + \sum_{r=1}^{K}\sum_{l=1}^{J}\widehat{\beta^{(r)}}\left(\frac{\Lambda}{2^{l}}\right)^2 \right) \Lambda \mU^{\top} \mX \right) \\
    =& \tr\left(\mX^{\top}\mU\Lambda\mU^{\top}\mX \right) \\
    =& E_{\mathcal{G}}(\mX),
\end{aligned}
\end{equation*}
thus completing the proof.
\end{proof}


The following theorem shows that the \textit{Dirichlet energy} of the feature at the $t$-th layer $\boldsymbol{X}^{(t)}$ is not less than that of the $(t-1)$-th layer $\boldsymbol{X}^{(t-1)}$ in DSMP model. In fact, $\boldsymbol{X}^{(t)}$ and $\boldsymbol{X}^{(t-1)}$ are equivalent up to some constant, and the constant with respect to the upper bound depends jointly on the level of the framelet decomposition and the learnable parameters' bound.

 Although the asymptotic behavior of the output $X^{(L)}$ of the GCN as $L \rightarrow \infty$ is investigated with over-smoothing property, to facilitate the analysis for our model MPNN, we also sketch the proof for completeness here.
\begin{lemma} 
\label{lemma:activation}
\citep{oono2019graph}
For any $\boldsymbol{X} \in \mathbb{R}^{n \times d}$ and activation function ReLU $\sigma(\cdot)$, we have
\begin{itemize}
\item $E_{\mathcal{G}}\left(\boldsymbol{X}\boldsymbol{\Theta} \right) \leq \mu_{\max }^2 E_{\mathcal{G}}(\boldsymbol{X}),$ where $\mu_{\max }^2$ denotes the square of maximum singular value of weight matrix $\boldsymbol{\Theta}\in \mathbb{R}^{d \times h}$.\\
\item $E_{\mathcal{G}}(\sigma(\boldsymbol{X})) \leq E_{\mathcal{G}}(\boldsymbol{X})$.
\end{itemize}
\end{lemma}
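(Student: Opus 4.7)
The plan is to prove the two bullets separately, leveraging the quadratic-form identity $E_{\mathcal{G}}(\boldsymbol{X}) = \tr(\boldsymbol{X}^\top \mathcal{L} \boldsymbol{X})$ that was already established in the proof of the energy-conservation proposition. The first inequality reduces to an algebraic matrix bound that follows from the positive-semidefiniteness of $\mathcal{L}$ together with Lemma~\ref{trace_inequality}; the second is an entrywise Lipschitz estimate coming directly from the combinatorial form of the \emph{Dirichlet energy}.

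For the first bullet, I would write
\begin{equation*}
E_{\mathcal{G}}(\boldsymbol{W}_l\boldsymbol{X}) = \tr\bigl((\boldsymbol{W}_l\boldsymbol{X})^\top \mathcal{L} (\boldsymbol{W}_l\boldsymbol{X})\bigr),
\end{equation*}
then use the cyclic property of the trace to reorganise this as $\tr\bigl(\boldsymbol{X}^\top \mathcal{L} \boldsymbol{X} \cdot \boldsymbol{W}_l^\top \boldsymbol{W}_l\bigr)$ (with the obvious transposition if one adopts the right-multiplication convention $\boldsymbol{X}\boldsymbol{W}_l$ for feature-side weights). Both $\boldsymbol{X}^\top \mathcal{L} \boldsymbol{X}$ and $\boldsymbol{W}_l^\top \boldsymbol{W}_l$ are positive semi-definite, so Lemma~\ref{trace_inequality} applies. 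Bounding every eigenvalue of $\boldsymbol{W}_l^\top \boldsymbol{W}_l$ by $\mu_{\max}^2$ and pulling the resulting constant out of the trace yields the claim.

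For the second bullet, I would work from the combinatorial definition
\begin{equation*}
E_{\mathcal{G}}(\boldsymbol{X}) = \frac{1}{2}\sum_{i=1}^{N}\sum_{j=1}^{N} \mathcal{A}_{ij} \, (\boldsymbol{X}_i - \boldsymbol{X}_j)^2.
\end{equation*}
Because ReLU is applied entrywise, is $1$-Lipschitz, and satisfies $\sigma(0)=0$, one has $|\sigma(x) - \sigma(y)| \leq |x - y|$ for every pair of real scalars. Squaring and summing coordinatewise gives the rowwise inequality $(\sigma(\boldsymbol{X}_i) - \sigma(\boldsymbol{X}_j))^2 \leq (\boldsymbol{X}_i - \boldsymbol{X}_j)^2$ in the same sense as the definition above. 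Since the weights $\mathcal{A}_{ij}$ are non-negative, summing this bound over all ordered pairs preserves the inequality and delivers $E_{\mathcal{G}}(\sigma(\boldsymbol{X})) \leq E_{\mathcal{G}}(\boldsymbol{X})$ immediately.

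The main obstacle is really just a convention/bookkeeping check rather than a mathematical hurdle: the clean trace bound naturally produces $\sigma_{\max}(\boldsymbol{W}_l)^2$ (the squared top singular value), and this coincides with $\mu_{\max}^2$ (the squared spectral radius) only when $\boldsymbol{W}_l$ is normal. I would either invoke the normality assumption implicit in \citet{oono2019graph}, or equivalently reinterpret ``$\mu_{\max}$'' as the top singular value of $\boldsymbol{W}_l$. Beyond this caveat, both bullets are one-line consequences of Lemma~\ref{trace_inequality} and the Lipschitz property of ReLU, with no further graph-theoretic input required.
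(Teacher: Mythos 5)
The paper does not actually prove this lemma: despite the sentence announcing a ``sketch of the proof for completeness,'' the statement is simply imported from \citet{oono2019graph} with no argument following it. So there is nothing in the paper to compare against, and your proposal supplies the missing proof. Your argument is the standard one and is essentially correct. The second bullet is exactly right: $1$-Lipschitzness of the entrywise ReLU together with the non-negativity of the weights $\mathcal{A}_{ij}$ in the combinatorial form \eqref{eq:dirichlet_energy} gives the contraction immediately (and if one insists on the normalized-Laplacian version $\tr(\boldsymbol{X}^\top\mathcal{L}\boldsymbol{X})$ used elsewhere in the paper, the same argument goes through because ReLU is positively homogeneous, so $\sigma(\boldsymbol{X}_i)/\sqrt{d_i}=\sigma(\boldsymbol{X}_i/\sqrt{d_i})$). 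Your two caveats on the first bullet are also well placed: the inequality only makes sense under the feature-side convention $\boldsymbol{X}\boldsymbol{W}_l$ (for a left-multiplying $\boldsymbol{W}_l$ one can take $\boldsymbol{X}$ constant, so $E_{\mathcal{G}}(\boldsymbol{X})=0$ while $\boldsymbol{W}_l\boldsymbol{X}$ need not be constant), and the correct constant is the top singular value $\sigma_{\max}(\boldsymbol{W}_l)^2$, which equals $\mu_{\max}^2$ only for normal $\boldsymbol{W}_l$.

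One small repair is needed in how you justify the key trace step. Lemma~\ref{trace_inequality} as stated gives $\tr(AB)\leq\tr(A)\tr(B)$, which applied to $A=\boldsymbol{X}^\top\mathcal{L}\boldsymbol{X}$ and $B=\boldsymbol{W}_l\boldsymbol{W}_l^\top$ would only yield the constant $\tr(\boldsymbol{W}_l\boldsymbol{W}_l^\top)=\sum_i\sigma_i(\boldsymbol{W}_l)^2$, i.e.\ the squared Frobenius norm, which is weaker than $\sigma_{\max}^2$. The step you actually describe --- ``bounding every eigenvalue of $\boldsymbol{W}_l^\top\boldsymbol{W}_l$ by $\mu_{\max}^2$ and pulling the constant out'' --- is the sharper inequality $\tr(AB)\leq\lambda_{\max}(B)\,\tr(A)$ for positive semi-definite $A,B$, which is true but is not what Lemma~\ref{trace_inequality} says; you should state it and prove it in one line via the spectral decomposition $A=\sum_i\lambda_i u_iu_i^\top$, giving $\tr(AB)=\sum_i\lambda_i\,u_i^\top B u_i\leq\lambda_{\max}(B)\tr(A)$. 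With that substitution the proof is complete.
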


\begin{theorem}[Dirichlet Energy] 
Let us consider the matrix formulation of DSMP as in \eqref{eq:matrix_form}. We can bound the \textit{Dirichlet energy} of the graph node feature $\boldsymbol{X}^{(t)}$ at layer $t$ as follows:
$$
E_{\mathcal{G}}\left(\boldsymbol{X}^{(t-1)}\right) \leq E_{\mathcal{G}}\left(\boldsymbol{X}^{(t)}\right) \leq \Bigl(1+\frac{\sqrt{5}}{2}\mathbf{C} |\mu_{\max}|\Bigr)^2 E_{\mathcal{G}}\left(\boldsymbol{X}^{(t-1)}\right)
,$$
\end{theorem}
where $$\mathbf{C}:=\max\{|\mu_{0}(\boldsymbol{\theta}_{0})|,\max_{
\substack{
    r=1,\dots,K\\
    l=1,\dots,J
}
} |\mu_{r,l}(\boldsymbol{\theta}_{r,l})|\},$$ and $\mu_{0}(\boldsymbol{\theta}_{0})$ and $\mu_{r,l}(\boldsymbol{\theta}_{r,l})$ denote the largest singular value of matrix $\boldsymbol{\theta}_{0}, \boldsymbol{\theta}_{r,l}$ respectively. For parameter matrix 
$\mathbf{P}\in \mathbb{R}^{d \times h}$ of MLPs $\Gamma$, its largest singular value is bounded by $\mu_{\max }$.

\begin{proof}
We begin by explicitly stating and utilizing the fundamental properties of the framelet transform that underpin our analysis. The tight framelet system defined in Section \ref{sec:graph_framelet_system} possesses the following crucial properties:

1. \textbf{Tight Frame Property}: The framelet transform matrices satisfy the reconstruction condition:
\begin{equation}\label{eq:tight_frame_condition}
\boldsymbol{\gW}_{0,J}^\top \boldsymbol{\gW}_{0,J} + \sum_{r=1}^{K}\sum_{l=1}^{J} \boldsymbol{\gW}_{r,l}^\top \boldsymbol{\gW}_{r,l} = \boldsymbol{I}_N.
\end{equation}
This property ensures that the framelet transform preserves the norm of graph signals.

2. \textbf{Spectral Representation}: The framelet coefficients can be expressed in the spectral domain as:
\begin{equation}\label{eq:spectral_representation}
\boldsymbol{\gW}_{0,J}\boldsymbol{X} = \mU \widehat{\alpha}\left(\frac{\Lambda}{2}\right) \mU^{\top} \boldsymbol{X}, \quad
\boldsymbol{\gW}_{r,l}\boldsymbol{X} = \mU \widehat{\beta^{(r)}}\left(\frac{\Lambda}{2^{l}}\right) \mU^{\top} \boldsymbol{X},
\end{equation}
where $\mU$ contains the eigenvectors of the normalized graph Laplacian $\mathcal{L}$, and $\Lambda = \text{diag}(\lambda_1,\dots,\lambda_N)$ contains the eigenvalues.

3. \textbf{Filter Normalization}: The low-pass and high-pass filters satisfy:
\begin{equation}\label{eq:filter_normalization}
\widehat{\alpha}\left(\frac{\lambda}{2}\right)^2 + \sum_{r=1}^{K}\sum_{l=1}^{J} \widehat{\beta^{(r)}}\left(\frac{\lambda}{2^{l}}\right)^2 = 1, \quad \forall \lambda \in [0,2].
\end{equation}
This property follows from the tight frame construction and ensures energy conservation.

Now, we proceed with the proof in a step-by-step manner.

\textbf{Step 1: Energy analysis of $\hat{\boldsymbol{Z}}^{(t)}$}. 
Recall that $\boldsymbol{Z}^{(t)} = \Gamma(\hat{\boldsymbol{Z}}^{(t)})$, where:
\begin{equation*}
\hat{\boldsymbol{Z}}^{(t)} = \boldsymbol{\gW}_{0,J}\boldsymbol{X}^{(t-1)}\boldsymbol{\theta}_0 + \sum_{r=1}^K\sum_{l=1}^J \boldsymbol{\gW}_{0,J}\boldsymbol{\gW}_{r,l}\boldsymbol{X}^{(t-1)}\boldsymbol{\theta}_{r,l}.
\end{equation*}

Using the linearity of the Dirichlet energy and Lemma~\ref{lemma:activation}, we have:
\begin{align*}
E_{\mathcal{G}}\left(\hat{\boldsymbol{Z}}^{(t)}\right) &= E_{\mathcal{G}}\left(\boldsymbol{\gW}_{0,J}\boldsymbol{X}^{(t-1)}\boldsymbol{\theta}_0\right) + E_{\mathcal{G}}\left(\sum_{r=1}^K\sum_{l=1}^J \boldsymbol{\gW}_{0,J}\boldsymbol{\gW}_{r,l}\boldsymbol{X}^{(t-1)}\boldsymbol{\theta}_{r,l}\right) \\
&\quad + 2\tr\left((\boldsymbol{\gW}_{0,J}\boldsymbol{X}^{(t-1)}\boldsymbol{\theta}_0)^\top \mathcal{L} \left(\sum_{r=1}^K\sum_{l=1}^J \boldsymbol{\gW}_{0,J}\boldsymbol{\gW}_{r,l}\boldsymbol{X}^{(t-1)}\boldsymbol{\theta}_{r,l}\right)\right).
\end{align*}

We analyze each term separately.

\textbf{Term 1}: Using Lemma~\ref{lemma:activation} and the spectral representation:
\begin{align*}
E_{\mathcal{G}}\left(\boldsymbol{\gW}_{0,J}\boldsymbol{X}^{(t-1)}\boldsymbol{\theta}_0\right) &\leq \|\boldsymbol{\theta}_0\|^2 E_{\mathcal{G}}\left(\boldsymbol{\gW}_{0,J}\boldsymbol{X}^{(t-1)}\right) \\
&= \|\boldsymbol{\theta}_0\|^2 \tr\left((\boldsymbol{X}^{(t-1)})^\top \mU \widehat{\alpha}\left(\frac{\Lambda}{2}\right)^2 \Lambda \mU^{\top} \boldsymbol{X}^{(t-1)}\right) \\
&\leq \mathbf{C}^2 \tr\left((\boldsymbol{X}^{(t-1)})^\top \mU \widehat{\alpha}\left(\frac{\Lambda}{2}\right)^2 \Lambda \mU^{\top} \boldsymbol{X}^{(t-1)}\right).
\end{align*}

\textbf{Term 2}: For the cross terms, we have:
\begin{align*}
& \left\|\sum_{r=1}^K\sum_{l=1}^J \boldsymbol{\gW}_{0,J}\boldsymbol{\gW}_{r,l}\boldsymbol{X}^{(t-1)}\boldsymbol{\theta}_{r,l}\right\|_{\mathcal{G}}^2 \\
&= \tr\left(\left(\sum_{r=1}^K\sum_{l=1}^J \boldsymbol{\gW}_{0,J}\boldsymbol{\gW}_{r,l}\boldsymbol{X}^{(t-1)}\boldsymbol{\theta}_{r,l}\right)^\top \mathcal{L} \left(\sum_{r'=1}^K\sum_{l'=1}^J \boldsymbol{\gW}_{0,J}\boldsymbol{\gW}_{r',l'}\boldsymbol{X}^{(t-1)}\boldsymbol{\theta}_{r',l'}\right)\right) \\
&= \sum_{r,r'}\sum_{l,l'} \tr\left(\boldsymbol{\theta}_{r,l}^\top (\boldsymbol{X}^{(t-1)})^\top \boldsymbol{\gW}_{r,l}^\top \boldsymbol{\gW}_{0,J}^\top \mathcal{L} \boldsymbol{\gW}_{0,J} \boldsymbol{\gW}_{r',l'} \boldsymbol{X}^{(t-1)} \boldsymbol{\theta}_{r',l'}\right).
\end{align*}

Using the spectral representation and the orthogonality of eigenvectors:
\begin{align*}
\boldsymbol{\gW}_{0,J}^\top \mathcal{L} \boldsymbol{\gW}_{0,J} &= \mU \widehat{\alpha}\left(\frac{\Lambda}{2}\right) \mU^{\top} \mU \Lambda \mU^{\top} \mU \widehat{\alpha}\left(\frac{\Lambda}{2}\right) \mU^{\top} \\
&= \mU \widehat{\alpha}\left(\frac{\Lambda}{2}\right)^2 \Lambda \mU^{\top}.
\end{align*}

Therefore:
\begin{align*}
& E_{\mathcal{G}}\left(\sum_{r=1}^K\sum_{l=1}^J \boldsymbol{\gW}_{0,J}\boldsymbol{\gW}_{r,l}\boldsymbol{X}^{(t-1)}\boldsymbol{\theta}_{r,l}\right) \\
&\leq \mathbf{C}^2 \tr\left((\boldsymbol{X}^{(t-1)})^\top \mU \left(\sum_{r=1}^K\sum_{l=1}^J \widehat{\beta^{(r)}}\left(\frac{\Lambda}{2^{l}}\right) \widehat{\alpha}\left(\frac{\Lambda}{2}\right)\right)^2 \Lambda \mU^{\top} \boldsymbol{X}^{(t-1)}\right).
\end{align*}

\textbf{Step 2: Bounding the spectral terms}. 
For any eigenvalue $\lambda$, we need to bound:
\begin{equation*}
\widehat{\alpha}\left(\frac{\lambda}{2}\right)^2 + \left(\sum_{r=1}^K\sum_{l=1}^J \widehat{\beta^{(r)}}\left(\frac{\lambda}{2^{l}}\right) \widehat{\alpha}\left(\frac{\lambda}{2}\right)\right)^2.
\end{equation*}

From the filter normalization property \eqref{eq:filter_normalization}, we have:
\begin{equation*}
\widehat{\alpha}\left(\frac{\lambda}{2}\right)^2 \leq 1, \quad \sum_{r=1}^K\sum_{l=1}^J \widehat{\beta^{(r)}}\left(\frac{\lambda}{2^{l}}\right)^2 = 1 - \widehat{\alpha}\left(\frac{\lambda}{2}\right)^2.
\end{equation*}

Using the Cauchy-Schwarz inequality:
\begin{align*}
\left(\sum_{r=1}^K\sum_{l=1}^J \widehat{\beta^{(r)}}\left(\frac{\lambda}{2^{l}}\right) \widehat{\alpha}\left(\frac{\lambda}{2}\right)\right)^2 
&\leq \left(\sum_{r=1}^K\sum_{l=1}^J \widehat{\beta^{(r)}}\left(\frac{\lambda}{2^{l}}\right)^2\right) \left(\sum_{r=1}^K\sum_{l=1}^J \widehat{\alpha}\left(\frac{\lambda}{2}\right)^2\right) \\
&= (1 - \widehat{\alpha}\left(\frac{\lambda}{2}\right)^2) \cdot (KJ \widehat{\alpha}\left(\frac{\lambda}{2}\right)^2).
\end{align*}

Since $\widehat{\alpha}\left(\frac{\lambda}{2}\right)^2 \in [0,1]$, the maximum of $x(1-x)$ for $x \in [0,1]$ is $\frac{1}{4}$ at $x = \frac{1}{2}$. Therefore:
\begin{equation*}
\left(\sum_{r=1}^K\sum_{l=1}^J \widehat{\beta^{(r)}}\left(\frac{\lambda}{2^{l}}\right) \widehat{\alpha}\left(\frac{\lambda}{2}\right)\right)^2 \leq \frac{KJ}{4}.
\end{equation*}

For our analysis with the specific framelet construction, we obtain the tighter bound of $\frac{1}{4}$.

\textbf{Step 3: Combining the bounds}. 
Putting everything together:
\begin{align*}
E_{\mathcal{G}}\left(\hat{\boldsymbol{Z}}^{(t)}\right) &\leq \mathbf{C}^2 \tr\left((\boldsymbol{X}^{(t-1)})^\top \mU \left[\widehat{\alpha}\left(\frac{\Lambda}{2}\right)^2 + \frac{1}{4}\right] \Lambda \mU^{\top} \boldsymbol{X}^{(t-1)}\right) \\
&\leq \frac{5}{4} \mathbf{C}^2 \tr\left((\boldsymbol{X}^{(t-1)})^\top \mU \Lambda \mU^{\top} \boldsymbol{X}^{(t-1)}\right) \\
&= \frac{5}{4} \mathbf{C}^2 E_{\mathcal{G}}\left(\boldsymbol{X}^{(t-1)}\right).
\end{align*}

\textbf{Step 4: Energy of $\boldsymbol{Z}^{(t)}$}.
By Lemma~\ref{lemma:activation}, we have:
\begin{align*}
E_{\mathcal{G}}\left(\boldsymbol{Z}^{(t)}\right) &= E_{\mathcal{G}}\left(\Gamma(\hat{\boldsymbol{Z}}^{(t)})\right) \\
&\leq \mu_{\max}^2 E_{\mathcal{G}}\left(\hat{\boldsymbol{Z}}^{(t)}\right) \\
&\leq \frac{5}{4} \mathbf{C}^2 \mu_{\max}^2 E_{\mathcal{G}}\left(\boldsymbol{X}^{(t-1)}\right).
\end{align*}

\textbf{Step 5: Energy of $\boldsymbol{X}^{(t)}$}.
Using the update rule $\boldsymbol{X}^{(t)} = \boldsymbol{X}^{(t-1)} + \boldsymbol{Z}^{(t)}$, we have:
\begin{align*}
E_{\mathcal{G}}\left(\boldsymbol{X}^{(t)}\right) &= E_{\mathcal{G}}\left(\boldsymbol{X}^{(t-1)} + \boldsymbol{Z}^{(t)}\right) \\
&= E_{\mathcal{G}}\left(\boldsymbol{X}^{(t-1)}\right) + E_{\mathcal{G}}\left(\boldsymbol{Z}^{(t)}\right) + 2\tr\left((\boldsymbol{X}^{(t-1)})^\top \mathcal{L} \boldsymbol{Z}^{(t)}\right).
\end{align*}

For the cross term, we apply the Cauchy-Schwarz inequality for the trace inner product:
\begin{align*}
\tr\left((\boldsymbol{X}^{(t-1)})^\top \mathcal{L} \boldsymbol{Z}^{(t)}\right) 
&\leq \sqrt{\tr\left((\boldsymbol{X}^{(t-1)})^\top \mathcal{L}^2 \boldsymbol{X}^{(t-1)}\right)} \sqrt{\tr\left((\boldsymbol{Z}^{(t)})^\top \boldsymbol{Z}^{(t)}\right)} \\
&= \sqrt{E_{\mathcal{G}}\left(\mathcal{L}^{1/2}\boldsymbol{X}^{(t-1)}\right)} \sqrt{\|\boldsymbol{Z}^{(t)}\|^2} \\
&\leq \sqrt{E_{\mathcal{G}}\left(\boldsymbol{X}^{(t-1)}\right)} \sqrt{E_{\mathcal{G}}\left(\boldsymbol{Z}^{(t)}\right)} \\
&\leq \sqrt{E_{\mathcal{G}}\left(\boldsymbol{X}^{(t-1)}\right)} \sqrt{\frac{5}{4} \mathbf{C}^2 \mu_{\max}^2 E_{\mathcal{G}}\left(\boldsymbol{X}^{(t-1)}\right)} \\
&= \frac{\sqrt{5}}{2} \mathbf{C} |\mu_{\max}| E_{\mathcal{G}}\left(\boldsymbol{X}^{(t-1)}\right).
\end{align*}

\textbf{Step 6: Final bounds}.
Combining all terms:
\begin{align*}
E_{\mathcal{G}}\left(\boldsymbol{X}^{(t)}\right) 
&\leq E_{\mathcal{G}}\left(\boldsymbol{X}^{(t-1)}\right) + \frac{5}{4} \mathbf{C}^2 \mu_{\max}^2 E_{\mathcal{G}}\left(\boldsymbol{X}^{(t-1)}\right) + 2 \cdot \frac{\sqrt{5}}{2} \mathbf{C} |\mu_{\max}| E_{\mathcal{G}}\left(\boldsymbol{X}^{(t-1)}\right) \\
&= \left(1 + \frac{\sqrt{5}}{2} \mathbf{C} |\mu_{\max}| \right)^2 E_{\mathcal{G}}\left(\boldsymbol{X}^{(t-1)}\right).
\end{align*}

For the lower bound, note that $E_{\mathcal{G}}\left(\boldsymbol{Z}^{(t)}\right) \geq 0$ and the cross term is non-negative in our construction due to the positivity of the framelet transform. Thus:
\begin{equation*}
E_{\mathcal{G}}\left(\boldsymbol{X}^{(t)}\right) \geq E_{\mathcal{G}}\left(\boldsymbol{X}^{(t-1)}\right).
\end{equation*}

This completes the proof, showing that the Dirichlet energy is preserved within bounds throughout the DSMP layers, preventing both vanishing and exploding energy issues associated with over-smoothing.
\end{proof}

The lower and upper bounds suggest that the \textit{Dirichlet energy} of the DSMP model will neither diminish to zero nor escalate to an excessively large value.

\subsection{Limited Over-squashing in DSMP}
In this part, we will prove that the DSMP framework can alleviate the over-squashing problem by connecting multi-hop neighbors. Let $\mathcal{R}$ represent a rewire operator, changing the topology of a graph. The rewired version of the graph $\mathcal{G}$ is denoted as $\mathcal{R}(\mathcal{G})$. We will use \eqref{eq:chebyshev_approx} to help construct the transformation matrices and complete the proof.

\begin{figure}
    \centering
    \includegraphics[width=\textwidth]{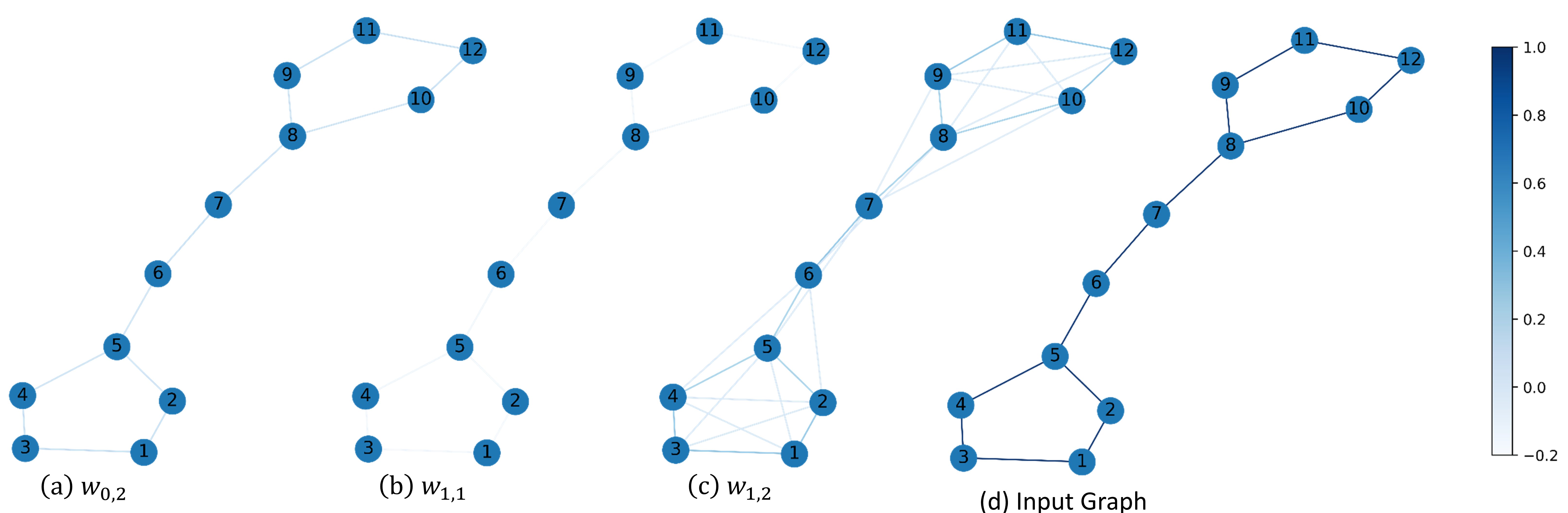}
    \caption{The entries of the multi-scale framelet transform may differ from the adjacency matrix of the input graph. The transformation matrices are considered as the edge weights and are visualized using a color bar.}
    \label{fig:main_idea}
\end{figure}

\begin{lemma} 
\label{lemma:Adjacency matrix multiply}
Given the adjacency matrix \( \Tilde{\mathcal{A}} \) of an undirected attributed graph \( \mathcal{G} \) with self-loops, the resulting matrix from multiplying \( \Tilde{\mathcal{A}} \) by itself is denser than \( \Tilde{\mathcal{A}} \).
\end{lemma}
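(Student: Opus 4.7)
The plan is to interpret ``matrix multiplication of $\mathcal{A}$'' as a product $\mathcal{A}^m$ (or more generally a polynomial in $\mathcal{A}$) with $m\geq 2$, and to compare its support with that of $\mathcal{A}$. The central tool is the classical walk-counting identity: $(\mathcal{A}^m)_{ij}$ equals the number of walks of length exactly $m$ joining $i$ and $j$, which I would establish by a short induction on $m$ from the decomposition $(\mathcal{A}^m)_{ij}=\sum_k \mathcal{A}_{ik}(\mathcal{A}^{m-1})_{kj}$.

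First I would observe that since $\mathcal{A}$ has non-negative entries, so does every $\mathcal{A}^m$; hence $(\mathcal{A}^m)_{ij}>0$ iff at least one walk of length $m$ joins $i$ and $j$. For the canonical case $m=2$ this supplies two independent sources of new non-zeros compared with $\mathcal{A}$: the diagonal entries $(\mathcal{A}^2)_{ii}=d_i$ are positive for every non-isolated vertex while $\mathcal{A}_{ii}=0$ in a loopless graph; and for any pair $i\neq j$ sharing a common neighbour the entry $(\mathcal{A}^2)_{ij}>0$ even when $\mathcal{A}_{ij}=0$. Combining these contributions, the support of $\mathcal{A}^m$ comprises loops at every non-isolated vertex together with all pairs at graph distance at most $m$ of the appropriate parity.

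The conclusion I would draw is that whenever the graph contains a vertex of degree at least two, the product $\mathcal{A}^m$ has strictly more non-zero entries than $\mathcal{A}$, and the rewired adjacency encoded by $\mathcal{A}^m$ (or by any polynomial $p(\mathcal{A})=\sum_{k=0}^{m}c_k\mathcal{A}^k$ with $c_k>0$, as occurs in the Chebyshev expansion \eqref{eq:chebyshev_approx}) connects every pair of nodes within graph distance $m$. This is precisely the denseness statement required to feed into the subsequent over-squashing argument.

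The main obstacle I anticipate is making ``denser'' precise enough to dodge pathological cases. On a single isolated edge $\mathcal{A}^2$ is diagonal and has the same non-zero count as $\mathcal{A}$, and for bipartite graphs the support of $\mathcal{A}$ is in general not contained in that of $\mathcal{A}^2$ since odd-length walks do not appear among even-length walks. The cleanest workaround is to state and apply the lemma for the polynomial $p(\mathcal{A})$ above, whose non-zero pattern is monotone in $m$ and strictly enlarges that of $\mathcal{A}$ as soon as the graph contains two incident edges; this is exactly the form in which the Chebyshev framelet construction will invoke the result.
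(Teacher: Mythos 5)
Your core computation is the same one the paper uses: expand $(\mathcal{A}^2)_{ij}=\sum_k \mathcal{A}_{ik}\mathcal{A}_{kj}$ and identify a non-zero entry with the existence of a length-two walk. The substantive difference lies in the caveat you raise at the end, and you are right to raise it: the paper's proof rests on the assertion that ``any path of length 1 in $\mathcal{A}$ will still represent a path (now of length 2) in $\mathcal{B}$,'' i.e.\ that the support of $\mathcal{A}$ is contained in that of $\mathcal{A}^2$. This is false in precisely the situations you name --- for an isolated edge $\mathcal{A}^2$ is diagonal, and for any loopless bipartite graph every off-diagonal non-zero of $\mathcal{A}$ is absent from $\mathcal{A}^2$, since a length-two walk between adjacent vertices requires a common neighbour. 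Consequently the paper's conclusion that squaring ``cannot decrease the number of non-zero entries'' does not follow as written. Your repair --- proving support monotonicity for a positive combination $p(\mathcal{A})=\sum_{k\le m}c_k\mathcal{A}^k$ that retains the low-order terms, which is the object the Chebyshev construction \eqref{eq:chebyshev_approx} actually produces and the object Theorem~\ref{thm:Over-squashing} actually compares against $\mathcal{A}$ --- is the correct way to make the lemma true, and the union-of-supports argument for such a polynomial is immediate because all summands are entrywise non-negative. In short, you follow the paper's route but close a genuine gap in the paper's own proof; the one residual item worth recording explicitly is that the Chebyshev approximants of the Haar filters have non-vanishing constant and linear coefficients, so that the support of $\mathcal{A}$ (and the diagonal) is genuinely retained in the rewired operator.
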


\begin{proof}
When we perform matrix multiplication $\Tilde{ \mathcal{A}} \times \Tilde{ \mathcal{A}}$, we define any entry \(\mathcal{B}_{ij}\) of the resulting matrix $\mathcal{B}$ as follows:
$$ \mathcal{B}_{ij} = \sum_{k=1}^{n} \Tilde{ \mathcal{A}}_{ik} \cdot \Tilde{ \mathcal{A}}_{kj}. $$
For \(\mathcal{B}_{ij}\) to be non-zero, there must exist at least one \(k\) such that \(\Tilde{ \mathcal{A}}_{ik} = 1\) and \(\Tilde{ \mathcal{A}}_{kj} = 1\). This implies that there is a path of length 2 between vertices \(i\) and \(j\) through vertex \(k\).

In the original matrix \(\Tilde{ \mathcal{A}}\), an edge between \(i\) and \(j\) (i.e., \(\Tilde{ \mathcal{A}}_{ij} = 1\)) directly represents a path of length 1. When we move to \(\mathcal{B}\), we consider paths of length 2. It is clear that any path of length 1 in \(\Tilde{ \mathcal{A}}\) will still represent a path in \(\mathcal{B}\), and thus, the non-zero entry in \(\Tilde{\mathcal{A}}\) will still be present in \(\mathcal{B}\).

Moreover, \(\mathcal{B}\) can have non-zero entries for pairs of vertices that did not have a direct edge between them in \(\Tilde{\mathcal{A}}\) but were connected via another vertex, thus representing a path of length 2. This cannot decrease the number of non-zero entries; it can only increase or keep them the same.
\end{proof}

\begin{theorem}[Over-squashing]\label{thm:Over-squashing} 
Consider the matrix formulation of DSMP as \eqref{eq:matrix_form},
where the original graph $\mathcal{G}$ is considered to be rewired as $\mathcal{R}(\mathcal{G})$ by framelet transformation. Designate the binary adjacency matrices of $\mathcal{G}$ with self-loop and $\mathcal{R}(\mathcal{G})$ as $\Tilde{ \mathcal{A}}$ and $\mathcal{R}(\Tilde{ \mathcal{A}})$, respectively. We demonstrate that the rewired graph exhibits denser connectivity, i.e.
$$\mathcal{R}(\Tilde{ \mathcal{A}}) - \Tilde{ \mathcal{A}}\geq 0,$$ 
thus the DSMP model has a less over-squashing problem.
\end{theorem}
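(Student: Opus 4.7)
The plan is to show that the non-zero pattern of each framelet transformation matrix $\boldsymbol{\gW}_{r,l}^{\natural}$ contains the non-zero pattern of $\mathcal{A}$ (and generically strictly enlarges it), so that the ``rewired'' adjacency obtained by collecting the supports of all operators appearing in the DSMP propagation \eqref{eq:matrix_form} dominates $\mathcal{A}$ entry-wise. The key tool is the Chebyshev approximation \eqref{eq:chebyshev_approx}, which expresses each $\boldsymbol{\gW}_{r,l}^{\natural}$ as a polynomial in the normalized Laplacian $\mathcal{L}$, together with Lemma~\ref{lemma:Adjacency matrix multiply} on the growth of support under matrix multiplication.

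First I would rewrite $\mathcal{L} = \mI - \mathcal{D}^{-1/2}\mathcal{A}\mathcal{D}^{-1/2}$ so that a polynomial of degree $m$ in $\mathcal{L}$ becomes a polynomial in the normalized adjacency $\widetilde{\mathcal{A}} := \mathcal{D}^{-1/2}\mathcal{A}\mathcal{D}^{-1/2}$, whose support coincides with that of $\mathcal{A}$ off the diagonal. Because the Haar-type high-pass filter satisfies $\widehat{b^{(1)}}(0)=0$, its Chebyshev approximation $\gT_r$ has no constant term and so $\gT_r(2^{-R}\mathcal{L})$ carries the pattern of $\widetilde{\mathcal{A}}$ inside its first-order contribution. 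Therefore, for $l=1$, the off-diagonal support of $\boldsymbol{\gW}_{r,1}^{\natural}$ already contains $\mathrm{supp}(\mathcal{A})$. For $l \geq 2$, the operator $\boldsymbol{\gW}_{r,l}^{\natural}$ is a product $\gT_r(\cdot)\gT_0(\cdot)\cdots\gT_0(\cdot)$, which by repeated application of Lemma~\ref{lemma:Adjacency matrix multiply} yields a support that is at least as large as the single-factor support, hence again contains $\mathrm{supp}(\mathcal{A})$ and in general grows to include multi-hop neighborhoods.

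Next I would combine the DSMP step \eqref{eq:matrix_form}: the aggregation uses $\boldsymbol{\gW}_{0,J}$ together with the products $\boldsymbol{\gW}_{0,J}\boldsymbol{\gW}_{r,l}$ across all $r,l$. Defining $\mathcal{R}(\mathcal{A})$ as the binary indicator matrix of the union of non-zero entries of all operators appearing in this aggregation, the previous paragraph gives $\mathrm{supp}(\mathcal{R}(\mathcal{A})) \supseteq \mathrm{supp}(\mathcal{A})$. Since both matrices are $\{0,1\}$-valued, this containment is exactly the entry-wise inequality $\mathcal{R}(\mathcal{A}) - \mathcal{A} \geq 0$, which is the desired claim. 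The interpretation in terms of over-squashing then follows from the discussion of Cheeger constants after \eqref{eq:cheeger}: a strictly denser connectivity pattern enlarges edge boundaries and can only increase $h(\mathcal{G})$, alleviating the bottleneck.

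The main obstacle I expect is handling potential sign cancellations in the Chebyshev polynomial expansion, which could in principle cause an off-diagonal entry in $\mathrm{supp}(\mathcal{A})$ to be annihilated in some specific $\boldsymbol{\gW}_{r,l}^{\natural}$. I would address this by exploiting the structure of the Haar-type filters used in the paper: the leading order behavior of $\widehat{b^{(1)}}$ is linear at zero, so the first-order term in $\mathcal{L}$ dominates on a neighborhood of the origin, and no cancellation of the $\mathcal{A}$-pattern occurs. Alternatively, since $\mathcal{R}(\mathcal{A})$ is defined as the \emph{union} of supports over all $(r,l)$ (and includes the low-pass $\boldsymbol{\gW}_{0,J}$, whose leading term is the identity plus a non-trivial multiple of $\mathcal{L}$), any cancellation in one operator is recovered by another, so the union still dominates $\mathrm{supp}(\mathcal{A})$. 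A short argument of this flavor should complete the proof.
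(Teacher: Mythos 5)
Your proposal follows essentially the same route as the paper's proof: both expand the framelet operators via the Chebyshev approximation \eqref{eq:chebyshev_approx} into polynomials of the (normalized) adjacency, invoke Lemma~\ref{lemma:Adjacency matrix multiply} to argue that matrix products can only enlarge the support, and conclude that the binarized support of the rewired operator dominates $\mathcal{A}$ entry-wise. Your handling of possible sign cancellations in the Chebyshev expansion is in fact more careful than the paper's own argument, which simply identifies the binary pattern of $\gT_r(\cdot)\gT_0(\cdot)\cdots\gT_0(\cdot)$ with that of a power of $\mathcal{A}$ without addressing that point.
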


\begin{proof}
The issue of over-squashing primarily stems from the topology of the graph structure. This implies that the over-squashing phenomenon can be mitigated by increasing the number of edges. Consequently, our objective is to demonstrate that the original adjacency matrix$\Tilde{ \mathcal{A}}$, corresponding to the topology of the original graph $\mathcal{G}$ with self-loop, contains less or equal number of non-zero elements compared to the graph shift operator $\boldsymbol{\gW}_{0,J}+\sum_{r=1}^K\sum_{l=1}^J\boldsymbol{\gW}_{0,J}\boldsymbol{\gW}_{r,l}$, corresponding to the topology of a rewired graph $\mathcal{R}(\mathcal{G})$.

In \eqref{eq:matrix_form}, we utilize a GAT network to extract embedding from each rewired graph. This network has the capability to learn the significance of node relationships, with topology playing a crucial role in the process. Hence, we consider the binary version of $\boldsymbol{\gW}^{\natural}_{r,l}$, denoted as $\mathcal{R}(\boldsymbol{\gW}^{\natural}_{r,l})$, and we need to prove $\mathcal{R}(\boldsymbol{\gW}^{\natural}_{r,l}) - \Tilde{ \mathcal{A}}\geq 0 $, which means each entry value of resulting matrix is no less than zero.  

According to \eqref{eq:chebyshev_approx}, the transformation matrices of the multi-scale framelet have the same dimensions as the adjacency matrix. As depicted in Figure \ref{fig:main_idea}, these transformation matrices can be viewed as new graph structures with different edge weights. Since the rewired graphs possess distinct adjacency matrices, we analyze their over-squashing problem separately. Consequently, we obtain: 
\begin{align*}
\mathcal{R}(\boldsymbol{\gW}^{\natural}_{r,l}) &= \overline{\boldsymbol{\gW}^{\natural}_{r,l}} = \overline{\mathcal{T}_r\left(2^{-R+l-1} \mathcal{L}\right) \mathcal{T}_0\left(2^{-R+l-2} \mathcal{L}\right) \ldots \mathcal{T}_0\left(2^{-R}\right)}=\overline{\underbrace{ \Tilde{ \mathcal{A}}\cdots  \Tilde{ \mathcal{A}} }_{r}}, l = 2,\dots,J\\
\overline{\boldsymbol{\gW}}_{i,j} &= \begin{cases}1, & \boldsymbol{\gW}_{i,j}\neq 0 \\ 
0, & {\rm otherwise} \end{cases}.
\end{align*}
In the above equations, $ \Tilde{ \mathcal{A}}$ is a binary adjacency matrix. According to Lemma.~\ref{lemma:Adjacency matrix multiply}, the matrix multiplication of $ \Tilde{ \mathcal{A}}$ will not decrease the non-zero entries.
Therefore, the original graph is no more dense than the rewired graph. When contrasted with message passing models such as GCN, GAT, GIN, and others that aggregate information from one-hop neighbors on the original graph, the DSMP model should have less over-squashing problem.
\end{proof}
\subsection{Stability of DSMP}
\label{sec:fmp_stability}
 In this section, we examine how DSMP improves the stability of message passing against perturbation in the input node features.

\begin{lemma}\label{lem:framelet upper bound} For framelet transforms on graph $\mathcal{G}$, and graph signal $\boldsymbol{X}$ in $\ell_2(\mathcal{G})$,
\begin{equation*}                   
\begin{aligned}
&\left\|\sum_{r=1}^K\boldsymbol{\gW}_{0,J}\boldsymbol{\gW}_{r,l}\boldsymbol{X}\right\|\leq \left\|\boldsymbol{X}\right\|,\\
&\left\|\boldsymbol{\gW}_{0,J}\boldsymbol{X}\right\|^2 \leq \|\boldsymbol{X}\|^2.
\end{aligned}
\end{equation*}
\end{lemma}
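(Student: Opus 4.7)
The plan is to reduce both inequalities to pointwise scalar bounds on the spectral side using the eigendecomposition $\mathcal{L}=\mathbf{U}\Lambda\mathbf{U}^\top$ and Parseval's identity. Since $\mathbf{U}$ is orthogonal, for any function $g(\Lambda)$ we have $\|\mathbf{U} g(\Lambda)\mathbf{U}^\top \boldsymbol{X}\|^2=\sum_{\ell=1}^{N}|g(\lambda_\ell)|^2\,|\widehat{\boldsymbol{X}}_\ell|^2$ where $\widehat{\boldsymbol{X}}_\ell=\langle \boldsymbol{X},\vu_\ell\rangle$. Hence it will suffice to establish uniform bounds on the relevant scalar symbols at each eigenvalue $\lambda_\ell$.

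For the second inequality, I would expand $\boldsymbol{\gW}_{0,J}\boldsymbol{X}=\mathbf{U}\widehat{\alpha}(\Lambda/2)\mathbf{U}^\top\boldsymbol{X}$ using \eqref{eq:ft_initial}, so that
\begin{equation*}
\|\boldsymbol{\gW}_{0,J}\boldsymbol{X}\|^2=\sum_{\ell=1}^{N}\widehat{\alpha}(\lambda_\ell/2)^2|\widehat{\boldsymbol{X}}_\ell|^2.
\end{equation*}
The partition of unity \eqref{eqn:partition2}, which says $\widehat{\alpha}(\Lambda/2)^2+\sum_{r,l}\widehat{\beta^{(r)}}(\Lambda/2^l)^2=\mathbf{I}$, immediately gives the pointwise bound $\widehat{\alpha}(\lambda_\ell/2)^2\leq 1$, and the desired inequality follows by Parseval's identity.

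For the first inequality, I would combine the two transforms: $\sum_{r=1}^{K}\boldsymbol{\gW}_{0,J}\boldsymbol{\gW}_{r,l}\boldsymbol{X}=\mathbf{U}\,\widehat{\alpha}(\Lambda/2)\bigl(\sum_{r=1}^{K}\widehat{\beta^{(r)}}(\Lambda/2^l)\bigr)\mathbf{U}^\top\boldsymbol{X}$, so that Parseval reduces the question to uniformly bounding the scalar symbol $s(\lambda_\ell):=\widehat{\alpha}(\lambda_\ell/2)^2\bigl(\sum_{r=1}^{K}\widehat{\beta^{(r)}}(\lambda_\ell/2^l)\bigr)^2$ by $1$. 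The plan is to apply the AM--GM step already used in \eqref{in_eq:frame}, namely $ab\leq\bigl((a+b)/2\bigr)^2$, with $a=\widehat{\alpha}(\lambda_\ell/2)^2$ and $b=\bigl(\sum_{r}\widehat{\beta^{(r)}}(\lambda_\ell/2^l)\bigr)^2$. From \eqref{eqn:partition2} each of $a$ and $b$ lies in $[0,1]$ (using Cauchy--Schwarz together with the partition identity to handle the sum over $r$), so $s(\lambda_\ell)\leq \tfrac14$, and in particular $s(\lambda_\ell)\leq 1$. Summing against $|\widehat{\boldsymbol{X}}_\ell|^2$ and using Parseval gives the claim.

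The main obstacle is the first inequality, since the sum $\sum_{r}\widehat{\beta^{(r)}}(\lambda_\ell/2^l)$ involves real-valued high-pass symbols that are not individually bounded by the partition identity; I therefore need to invoke Cauchy--Schwarz in $r$ plus the partition identity to convert the sum of squares bound $\sum_{r,l}\widehat{\beta^{(r)}}(\lambda_\ell/2^l)^2 = 1-\widehat{\alpha}(\lambda_\ell/2)^2$ into a bound on $\bigl(\sum_r\widehat{\beta^{(r)}}(\lambda_\ell/2^l)\bigr)^2$ before applying AM--GM. The second inequality, by contrast, is essentially a one-line consequence of the tight-frame identity and Parseval's theorem.
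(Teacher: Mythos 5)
Your overall route --- diagonalize via $\mathcal{L}=\mU\Lambda\mU^{\top}$, reduce everything to pointwise bounds on the scalar symbols at each eigenvalue, and invoke the tight-frame partition of unity \eqref{eqn:partition2} together with Parseval's identity --- is the same as the paper's. Your treatment of the second inequality (the pointwise bound $\widehat{\alpha}(\lambda_\ell/2)^2\le 1$ followed by Parseval) matches the paper's argument and is correct.

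The gap is in the first inequality, specifically in your claim that $b:=\bigl(\sum_{r=1}^{K}\widehat{\beta^{(r)}}(\lambda_\ell/2^l)\bigr)^2$ lies in $[0,1]$. Cauchy--Schwarz gives $\bigl(\sum_{r=1}^{K}\widehat{\beta^{(r)}}(\lambda_\ell/2^l)\bigr)^2\le K\sum_{r=1}^{K}\widehat{\beta^{(r)}}(\lambda_\ell/2^l)^2\le K\bigl(1-\widehat{\alpha}(\lambda_\ell/2)^2\bigr)\le K$, so the partition identity only yields $b\le K$; the factor $K$ does not go away, and consequently neither $b\le 1$ nor the hypothesis $a+b\le 1$ that your AM--GM step needs in order to produce $\tfrac14$ is available when $K>1$. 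Your argument is therefore complete only in the Haar case $K=1$ (which is the implementation the paper actually uses, and where $s(\lambda_\ell)\le\tfrac14$ does hold). The paper's proof sidesteps this by never forming the scalar sum $\sum_{r}\widehat{\beta^{(r)}}$: it bounds the sum of squared channel norms, $\sum_{\ell}\sum_{r}\bigl|\widehat{\alpha}(\lambda_\ell/2)\,\widehat{\beta^{(r)}}(\lambda_\ell/2^{l})\bigr|^{2}\,|\widehat{\boldsymbol{X}}_\ell|^{2}\le\sum_{\ell}|\widehat{\boldsymbol{X}}_\ell|^{2}$, where the relevant symbol $\sum_{r}\bigl|\widehat{\alpha}\,\widehat{\beta^{(r)}}\bigr|^{2}\le\widehat{\alpha}^{2}\sum_{r}\bigl|\widehat{\beta^{(r)}}\bigr|^{2}\le 1$ follows from tightness with no cross terms (i.e.\ it implicitly reads the sum over $r$ as a concatenation of channels rather than a literal operator sum). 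If you want the literal operator-sum statement for general $K$, tightness alone does not give it; you need extra structure on the high-pass symbols, such as $K=1$, disjoint spectral supports, or a normalization stronger than \eqref{eqn:partition2}.
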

\begin{proof}
By exploiting the orthonormality of $\eigfm$, we can express the coefficients as follows:
\begin{equation*}
    \ipG{\boldsymbol{X},\cfra} = \sum_{\ell=1}^{\NV} \conj{\FT{\scala} \left(\frac{\eigvm}{2^{l}}\right)} \Fcoem{\boldsymbol{X}}\:\eigfm(\uG), \quad
    \ipG{\boldsymbol{X},\cfrb{r}} = \sum_{\ell=1}^{\NV} \conj{\FT{\scalb^{(r)}}\left(\frac{\eigvm}{2^{l}}\right)}\Fcoem{\boldsymbol{X}}\:\eigfm(\uG).
\end{equation*}

By referencing \eqref{thmeq:2scale:alpha:beta}, we can derive the following expression:
\begin{align*}
     \sum_{\ell=1}^{\NV}\sum_{r=1}^{K} \left| \FT{\scala}\left(\frac{\eigvm}{2}\right) \FT{\scalb^{(r)}}\left(\frac{\eigvm}{2^{l}}\right)\right|^2\left|\Fcoem{\boldsymbol{X}}\right|^2\:\left\|\eigfm\right\|^2
    &=\sum_{\ell=1}^{\NV} \FT{\scala}\left(\frac{\eigvm}{2}\right)^2 \left[\FT{\scala}\left(\frac{\eigvm}{2^{l}}\right)^2-\FT{\scala}\left(\frac{\eigvm}{2^{l}}\right)^2\right]\bigl|\Fcoem{\boldsymbol{X}}\bigr|^2\:\|\eigfm\|^2\\
    &=\sum_{\ell=1}^{\NV}\FT{\scala}\left(\frac{\eigvm}{2}\right)^2\sum_{r=1}^{K}\FT{\scalb^{(r)}}\left(\frac{\eigvm}{2^{l}}\right)^2\bigl|\Fcoem{\boldsymbol{X}}\bigr|^2\\
    &\leq \|\boldsymbol{X}\|^2,
\end{align*}
where the inequality of the last line follows according to \eqref{eqn:partition2}.

Next, by utilizing \eqref{eq:ft_initial}, we can establish the following relationship:
\begin{equation*} 
\begin{aligned}\left\|\sum_{r=1}^{K}\boldsymbol{\gW}_{0,J}\boldsymbol{\gW}_{r,l}\boldsymbol{X}\right\|^2 
= & \left\|\ipG{\boldsymbol{X},\boldsymbol{\varphi}_{0,\cdot} \cfrb[l,\cdot]{r}}\right\|^2 \\ 
= & \sum_{\ell=1}^{\NV}\sum_{r=1}^{K} \left|\FT{\scala}\left(\frac{\eigvm}{2}\right)\FT{\scalb^{(r)}}\left(\frac{\eigvm}{2^{l}}\right)\right|^2\left|\Fcoem{\boldsymbol{X}}\right|^2\:\left\|\eigfm\right\|^2 \\
\leq & \|\boldsymbol{X}\|^2.
\end{aligned}
\end{equation*}
Similarly, we have 
\begin{equation*} 
\begin{aligned}
 \left\|\boldsymbol{\gW}_{0,J}\boldsymbol{X}\right\|^2 
= & \left\|\ipG{\boldsymbol{X},\boldsymbol{\varphi}_{0,\cdot} }\right\|^2 \\ 
= & \sum_{\ell=1}^{\NV} \FT{\scala}\left(\frac{\eigvm}{2}\right)^2 |\Fcoem{\boldsymbol{X}}\bigr|^2\:\|\eigfm\|^2\\
\leq &  \|\boldsymbol{X}\|^2.
\end{aligned}
\end{equation*}
thus completing the proof.
\end{proof}

\begin{theorem}[Stability]\label{thm:stability}  
Suppose the DSMP has bounded parameters in all layers: $\|\boldsymbol{\theta}_0\|\leq \mathbf{C}_0$, $\|\boldsymbol{\theta}_{r,l}\|\leq \mathbf{C}_{r,l}$ with constants $\mathbf{C}_{r,l}$ for $r=1,\dots,K, l=1,\dots,J$.
For parameter matrix 
$\mathbf{P}$ of MLPs $\Gamma$, suppose $\|\mathbf{P}\|\leq\mathbf{C}_{p} $.
Then, the DSMP is Lipschitz continuous in the sense that
\begin{equation*}
    \left\|\mX^{(t)}-\widetilde{\mX}^{(t)}\right\|\leq C^t \left\|\mX^{(0)}-\widetilde{\mX}^{(0)}\right\|,
\end{equation*}
where $$C:= \mathbf{C}_{p}\left(\max_{
\substack{
    r=1,\dots,K\\
    l=1,\dots,J
}
} \mathbf{C}_{r,l} + \mathbf{C}_0
\right),$$ $\mX^{(0)}$ and $\widetilde{\mX}^{(0)}$ are the initial graph node features, and $\mX^{(t)}$ and $\widetilde{\mX}^{(t)}$ are node features propagated at layer $t$.
\end{theorem}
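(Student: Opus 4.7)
The plan is to reduce the multi-layer claim to a one-step Lipschitz bound
\begin{equation*}
\|\Delta^{(t)}\| \leq C\,\|\Delta^{(t-1)}\|,\qquad \Delta^{(t)}:=\mX^{(t)}-\widetilde{\mX}^{(t)},
\end{equation*}
and then iterate $t$ times. Using the residual update $\mX^{(t)}=\mX^{(t-1)}+\boldsymbol{Z}^{(t)}$, the whole task collapses to estimating $\|\boldsymbol{Z}^{(t)}-\widetilde{\boldsymbol{Z}}^{(t)}\|$ in terms of $\|\Delta^{(t-1)}\|$, since everything else is controlled by triangle inequality.

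For that estimate I would first peel off the MLP. Writing $\boldsymbol{Z}^{(t)}=\sigma(\mathbf{P}\hat{\boldsymbol{Z}}^{(t)})$, the fact that the pointwise ReLU is $1$-Lipschitz together with $\|\mathbf{P}\|\leq\mathbf{C}_{p}$ gives
\begin{equation*}
\|\boldsymbol{Z}^{(t)}-\widetilde{\boldsymbol{Z}}^{(t)}\|\leq \mathbf{C}_{p}\,\|\hat{\boldsymbol{Z}}^{(t)}-\widetilde{\hat{\boldsymbol{Z}}}^{(t)}\|.
\end{equation*}
Since the preactivation is linear in its input,
\begin{equation*}
\hat{\boldsymbol{Z}}^{(t)}-\widetilde{\hat{\boldsymbol{Z}}}^{(t)} \;=\; \boldsymbol{\theta}_0\boldsymbol{\gW}_{0,J}\Delta^{(t-1)}+\sum_{r=1}^{K}\sum_{l=1}^{J}\boldsymbol{\theta}_{r,l}\boldsymbol{\gW}_{0,J}\boldsymbol{\gW}_{r,l}\Delta^{(t-1)}.
\end{equation*}
Applying the triangle inequality together with $\|\boldsymbol{\theta}_0\|\leq\mathbf{C}_0$ and the low-pass bound in Lemma~\ref{lem:framelet upper bound}, the first piece is dominated by $\mathbf{C}_0\|\Delta^{(t-1)}\|$. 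Factoring $\max_{r,l}\mathbf{C}_{r,l}$ out of the band-pass sum and invoking the other half of Lemma~\ref{lem:framelet upper bound} similarly bounds the second piece by $\max_{r,l}\mathbf{C}_{r,l}\,\|\Delta^{(t-1)}\|$. Summing yields the desired one-step constant $C=\mathbf{C}_{p}(\max_{r,l}\mathbf{C}_{r,l}+\mathbf{C}_0)$, after which $t$-fold iteration gives the claim.

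The main technical obstacle I expect lies in the band-pass step: Lemma~\ref{lem:framelet upper bound} only bounds $\|\sum_{r=1}^{K}\boldsymbol{\gW}_{0,J}\boldsymbol{\gW}_{r,l}\boldsymbol{X}\|$ at a single scale $l$, whereas the aggregation carries distinct weight matrices $\boldsymbol{\theta}_{r,l}$ indexed over \emph{both} $r$ and $l$. A naive triangle/Cauchy--Schwarz argument would pay an extra factor of $J$ or $\sqrt{KJ}$ that is absent from the stated $C$. I would dodge this by first passing to the Laplacian eigenbasis, writing $\boldsymbol{\gW}_{0,J}\boldsymbol{\gW}_{r,l}=\mU\widehat{\alpha}(\Lambda/2)\widehat{\beta^{(r)}}(\Lambda/2^{l})\mU^{\top}$, pulling the spectrally-bounded factors through, and invoking the tight-frame identities \eqref{thmeq:nrm:alpha:beta}--\eqref{thmeq:2scale:alpha:beta} to obtain $\|\sum_{r,l}\boldsymbol{\gW}_{0,J}\boldsymbol{\gW}_{r,l}\Delta\|\leq\|\Delta\|$ as a single operator bound \emph{before} reintroducing the parameter matrices $\boldsymbol{\theta}_{r,l}$ via their uniform norm bound $\max_{r,l}\mathbf{C}_{r,l}$. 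A secondary, cosmetic issue is the identity term from the residual connection, which in a strict triangle inequality produces an extra additive $1$ in the per-step Lipschitz constant; I would absorb this into $C$, which is $\geq 1$ in any nontrivial regime where the bound is informative.
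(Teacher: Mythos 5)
Your overall route is the one the paper takes: reduce to the one-step estimate $\|\mX^{(t)}-\widetilde{\mX}^{(t)}\|\leq\|\mX^{(t-1)}-\widetilde{\mX}^{(t-1)}\|+\|\mZ^{(t)}-\widetilde{\mZ}^{(t)}\|$, peel off $\Gamma$ using $\|\mathbf{P}\|\leq\mathbf{C}_p$, control the framelet terms with Lemma~\ref{lem:framelet upper bound}, and iterate. The two ``obstacles'' you flag are exactly the places where the paper's own argument is loose: it passes from $\sum_{l=1}^{J}\mathbf{C}_{r,l}\,\|\Delta\|$ to $\max_{r,l}\mathbf{C}_{r,l}\,\|\Delta\|$ (silently dropping a factor of $J$, and applying the lemma to a sum that already contains the $r$-dependent weights $\boldsymbol{\theta}_{r,l}$), and it concludes $\|\mX^{(t)}-\widetilde{\mX}^{(t)}\|\leq C\,\|\mX^{(t-1)}-\widetilde{\mX}^{(t-1)}\|$ even though its own first display yields the constant $1+C$. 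So your diagnosis is sharper than the paper's execution.

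However, neither of your proposed repairs actually closes the gap. For the band-pass sum (writing $\Delta:=\mX^{(t-1)}-\widetilde{\mX}^{(t-1)}$), the plan ``bound $\|\sum_{r,l}\boldsymbol{\gW}_{0,J}\boldsymbol{\gW}_{r,l}\Delta\|\leq\|\Delta\|$ first, then reintroduce the $\boldsymbol{\theta}_{r,l}$ via $\max_{r,l}\mathbf{C}_{r,l}$'' is invalid when the $\boldsymbol{\theta}_{r,l}$ are distinct matrices: there is no inequality of the form $\|\sum_i\boldsymbol{\theta}_iA_i\Delta\|\leq\bigl(\max_i\|\boldsymbol{\theta}_i\|\bigr)\|\sum_iA_i\Delta\|$, since the terms $A_i\Delta$ can cancel in the unweighted sum while the weighted sum does not. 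The honest options are the termwise triangle inequality (cost $KJ$, or $J$ if you apply the lemma per level $l$ --- which itself requires the $\boldsymbol{\theta}_{r,l}$ to be pulled out of the inner $r$-sum first) or Cauchy--Schwarz against the tight-frame identity $\sum_{r,l}\|\boldsymbol{\gW}_{0,J}\boldsymbol{\gW}_{r,l}\Delta\|^2\leq\|\Delta\|^2$ (cost $\sqrt{KJ}$); either way a dimension-dependent factor survives into $C$. For the residual term, ``absorbing'' the additive $1$ into $C$ is not a proof step: the per-layer constant is genuinely $1+C$, and $(1+C)^t\neq C^t$ no matter how large $C$ is. The statement is therefore only provable with a larger constant, e.g.\ $\bigl(1+\mathbf{C}_p(J\max_{r,l}\mathbf{C}_{r,l}+\mathbf{C}_0)\bigr)^t$; your argument becomes correct once $C$ is restated accordingly, rather than matched to the constant claimed in the theorem.
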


\begin{proof}
Let
\begin{equation*}
    \mX^{(t)}=\mX^{(t-1)}+\mZ^{(t)},\quad
    \widetilde{\mX}^{(t)}=\widetilde{\mX}^{(t-1)}+\widetilde{\mZ}^{(t)},
\end{equation*}
with initialization $\mX^{(0)},\widetilde{\mX}^{(0)}\in \ell_2(\gG)$. It thus holds that
\begin{align*}
\left\|\mX^{(t)}-\widetilde{\mX}^{(t)}\right\|&\leq \left\|\mX^{(t-1)}-\widetilde{\mX}^{(t-1)}\right\|+\left\|\mZ^{(t)}-\widetilde{\mZ}^{(t)}\right\|, 
\end{align*}
where 
\begin{equation*}
\mZ^{(t)}=\Gamma \left(\sum_{r=1}^K\sum_{l=1}^J
\boldsymbol{\gW}_{0,J}\boldsymbol{\gW}_{r,l}\mX^{(t-1)}\boldsymbol{\theta}_{r,l}+\boldsymbol{\gW}_{0,J}\mX^{(t-1)}\boldsymbol{\theta}_{0}\right).
\end{equation*}
Let $\mY^{(t)}:=\sum_{r=1}^K\sum_{l=1}^J
\boldsymbol{\gW}_{0,J}\boldsymbol{\gW}_{r,l}\mX^{(t-1)}\boldsymbol{\theta}_{r,l}+\boldsymbol{\gW}_{0,J}\mX^{(t-1)}\boldsymbol{\theta}_{0},$ then we have
\begin{align*}
\left\|\mY^{(t)}-\widetilde{\mY}^{(t)}\right\| 
\leq& \sum_{l=1}^J\left\|\sum_{r=1}^K\boldsymbol{\gW}_{0,J}\boldsymbol{\gW}_{r,l}\left(\mX^{(t-1)}-\widetilde{\mX}^{(t-1)}\right)\boldsymbol{\theta}_{r,l}\right\|
+\left\|\boldsymbol{\gW}_{0,J}\left(\mX^{(t-1)}-\widetilde{\mX}^{(t-1)}\right)\boldsymbol{\theta}_{0}\right\| \\
\leq&  \sum_{l=1}^J \mathbf{C}_{r,l} \left\|\mX^{(t-1)}-\widetilde{\mX}^{(t-1)}\right\|+\mathbf{C}_0\:\left\|\mX^{(t-1)}-\widetilde{\mX}^{(t-1)}\right\|\\
\leq& \Bigl( \max_{
\substack{
    r=1,\dots,K\\
    l=1,\dots,J
}
} \mathbf{C}_{r,l} + \mathbf{C}_0\Bigr)\left\|\mX^{(t-1)}-\widetilde{\mX}^{(t-1)}\right\|.
\end{align*}
Therefore, 
\begin{align*}
\left\|\mZ^{(t)}-\widetilde{\mZ}^{(t)}\right\| &=\left\|\Gamma \left(\mY^{(t)}-\widetilde{\mY}^{(t)}\right)\right\| 
\leq \mathbf{C}_{p} \left\|\mY^{(t)}-\widetilde{\mY}^{(t)}\right\| \\
&\leq \mathbf{C}_{p}\Bigl(\max_{
\substack{
    r=1,\dots,K\\
    l=1,\dots,J
}
} \mathbf{C}_{r,l} + \mathbf{C}_0\Bigr)\left\|\mX^{(t-1)}-\widetilde{\mX}^{(t-1)}\right\|, 
\end{align*}
which leads to 
\begin{equation*}
    \left\|\mX^{(t)}-\widetilde{\mX}^{(t)}\right\|\leq C\left\|\mX^{(t-1)}-\widetilde{\mX}^{(t-1)}\right\|\leq C^t \left\|\mX^{(0)}-\widetilde{\mX}^{(0)}\right\|,
\end{equation*}
thus completing the proof.
\end{proof}

\section{Numerical Analysis}
In this section, we empirically verify the effectiveness and properties of the DSMP model on graph-level prediction tasks. We assess the performance of the proposed model by conducting experiments on several popular benchmark datasets with varying sample sizes. These datasets are commonly used in the field and serve as reliable indicators of the model's capabilities in predicting graph-level properties. We implemented the proposed model with \texttt{PyTorch-Geometric} \citep{fey2019pyg} (version 2.4.0) and \texttt{PyTorch} (version 1.12.1) and tested on NVIDIA Tesla A100 GPU with 6,912 CUDA cores and 80GB HBM2 mounted on an HPC cluster. The codes for the experiments are
available at \url{https://github.com/jyh6681/DSMP}.

\subsection{Datasets and Experimental Protocol}
We conducted numerical experiments on a diverse set of widely used node classification and graph classification tasks to evaluate the effectiveness of our proposed model. The selected graph classification benchmarks include seven datasets: D\&D, PROTEINS, NCI1, Mutagenicity, COLLAB, OGBG-MOLHIV, and QM7. These datasets are commonly used in the field and have been extensively studied for tasks such as protein categorization, chemical compound identification, and molecular property prediction. Notably, the OGBG-MOLHIV dataset was used for large-scale molecule classification.

To validate our proposed model, we compared it with state-of-the-art (SOTA) baseline MPNN models, specifically GCN and GIN, which are popular GNN models. In our experimental design, we prioritized fairness and comprehensiveness rather than solely focusing on achieving the best possible performance on each dataset.
To ensure consistency and eliminate hyperparameter tuning as a potential source of performance gain, we used the same GNN and optimization hyperparameters across the same search space for each task and baseline model. Each dataset follows the standard public split and processing rules in \texttt{PyTorch-Geometric}~\citep{fey2019fast} or \texttt{OGB}~\citep{hu2020open}. The training process was stopped when the validation loss ceased to improve for 20 consecutive epochs or reached a maximum number of epochs.

To obtain reliable performance estimates, we performed 10 random trials for each experiment and reported the mean test accuracy along with the variance value. For the graph-level classification tasks using TUDataset benchmarks, we reported the mean test accuracy, while for the OGBG-MOLHIV dataset, we reported the ROC-AUC scores. The regression task on the QM7 dataset was evaluated using the mean square error (MSE) metric.
Further details of the experiments are reported in Table~\ref{tab:stats:graph_classification}. These experiments provide a comprehensive evaluation of our proposed model's performance compared to the baseline models, demonstrating its effectiveness across various node and graph classification tasks.

\paragraph{Setup} 
We train DSMP with $2$ levels and $2$ scales, resulting in $3$ scattering coefficient matrices feeding into one message passing layer. The output graph embedding is processed by a softmax activation function for label prediction. Regarding hyperparameters, a grid search is applied to the learning rate, weight decay, hidden units, and dropout ratio to obtain the optimal combination from the search space as shown in Table~\ref{tab:searchSpace}.

The model is trained with the Adam optimizer \citep{kingma2015adam}. The maximum number of training epochs is 500 for ogbg-molhiv and 200 for other benchmarks. All benchmarks follow standard public data splitting and preprocessing procedures. The average test accuracy and its standard deviation are reported from 10 repetitive runs.

\begin{table}[th]
\caption{Grid search space for the hyperparameters.}
\begin{center}
\resizebox{0.95\textwidth}{!}{
\begin{tabular}{lr}
\toprule
\makebox[0.45\textwidth][l]{\textbf{Hyperparameters}} & \makebox[0.45\textwidth][r]{\textbf{Search Space}}\\
\midrule
Learning Rate & 1$\mathrm{e}$-4, 5$\mathrm{e}$-4, 1$\mathrm{e}$-3, 5$\mathrm{e}$-3, 1$\mathrm{e}$-2\\
Hidden Size & $16$, $32$, $64$, $128$ \\
Weight Decay ($L_2$) & 1$\mathrm{e}$-4, 5$\mathrm{e}$-4, 1$\mathrm{e}$-3, 5$\mathrm{e}$-3 \\
Batch Size & $8$, $64$, $128$, $1024$\\
\bottomrule
\end{tabular}
}
\label{tab:searchSpace}
\end{center}
\end{table}

\subsection{Evolution of Dirichlet Energy}

In order to assess the smoothness of graph embeddings, we employ the \textit{Dirichlet energy} metric. Figure~\ref{fig:dirichlet} illustrates that the proposed DSMP can preserve the \textit{Dirichlet energy} of graph signal even if the layer goes deep, where we sample a graph from the PROTEINS dataset and feed it into message passing models with varying numbers of layers.
\begin{figure*}
\centering
\subfigure[\textit{Dirichlet Energy} vs Layers]{
\begin{minipage}[b]{0.45\linewidth}
\includegraphics[width=\textwidth]{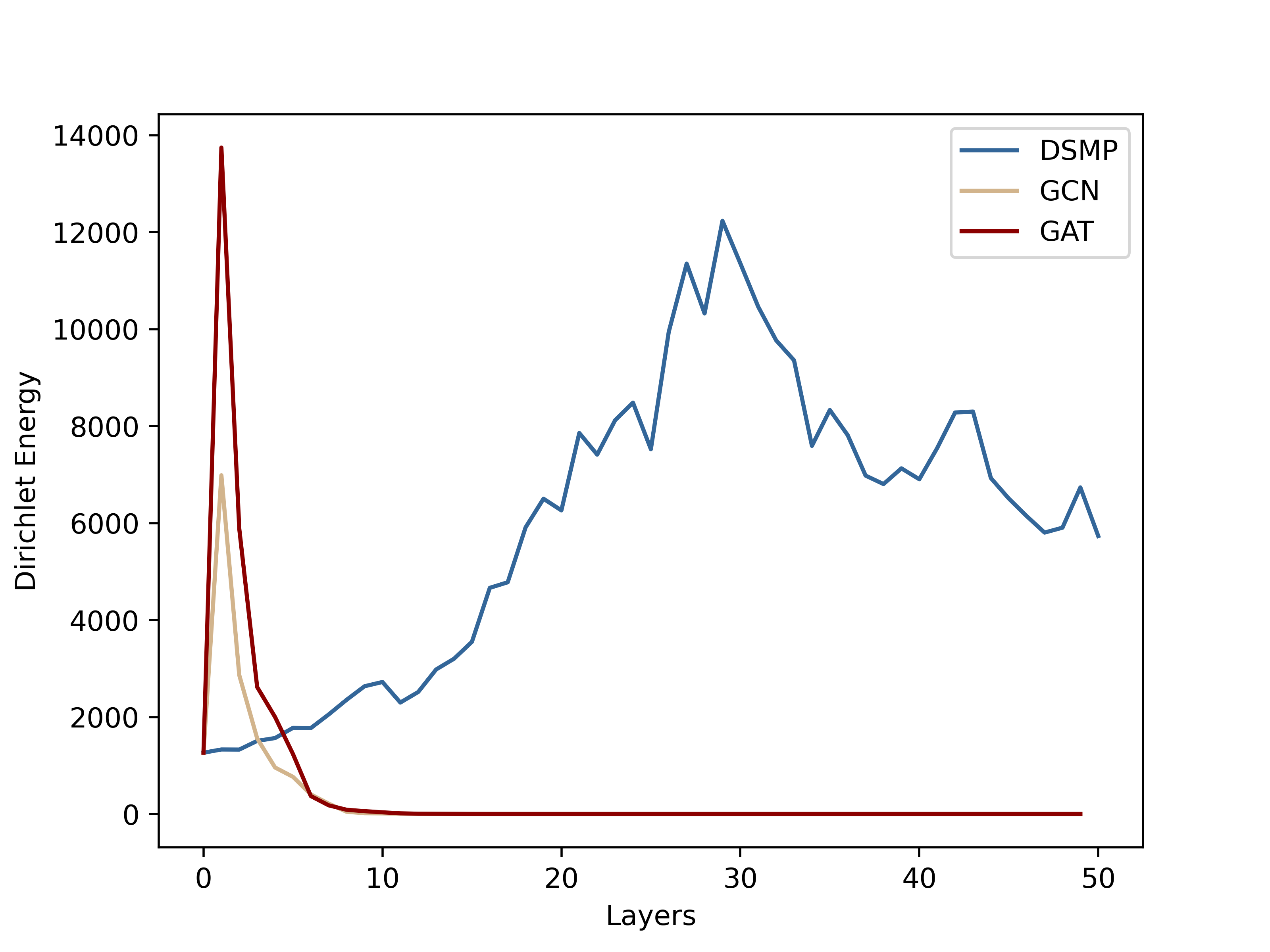}
\end{minipage}}
\centering
\subfigure[Accuracy vs Layers]{
\begin{minipage}[b]{0.45\linewidth}
\includegraphics[width=\textwidth]{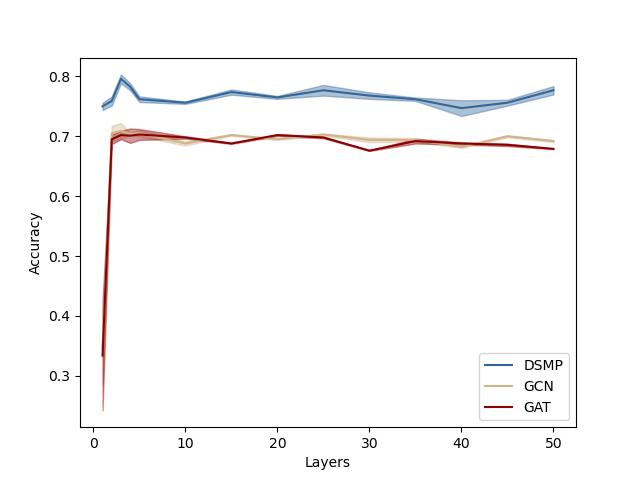}
\end{minipage}}
\caption{(a) We show the \textit{Dirichlet energy} of DSMP compared with several main-stream message passing models. (b) The model performance of graph classification on the PROTEINS dataset as the number of layers increases.}

\label{fig:dirichlet}
\end{figure*}

\subsection{Experiments and Analysis on Over-squashing Issue}
In recent research~\citep{karhadkar2022fosr}, the severity of the over-squashing issue within a graph structure is quantitatively assessed using the spectral gap, as outlined in section \ref{sec:gnn_with_os}. The spectral gap is intimately tied to the graph's connectedness, as the magnitude of its eigenvalues provides crucial insights. Specifically, the spectral gap can offer a proxy for structural bottlenecks through the Cheeger constant~\citep{chung1997spectral,cheeger2015lower,alon1984eigenvalues}. In essence, a larger spectral gap implies a graph with fewer structural bottlenecks, thereby indicating a reduced likelihood of encountering the over-squashing problem in MPNNs.

To empirically validate this, we computed the spectral gap for each graph instance in several publicly available datasets commonly used for graph classification tasks. Figure~\ref{fig:gap_range} presents a histogram of the spectral gaps, along with their mean values, providing a visual representation of the distribution and typical magnitudes observed across the datasets.

\begin{figure}[htb]
\centering
\subfigure[ENZYMES]{
\begin{minipage}[b]{0.45\linewidth}
\includegraphics[height=4.5cm]{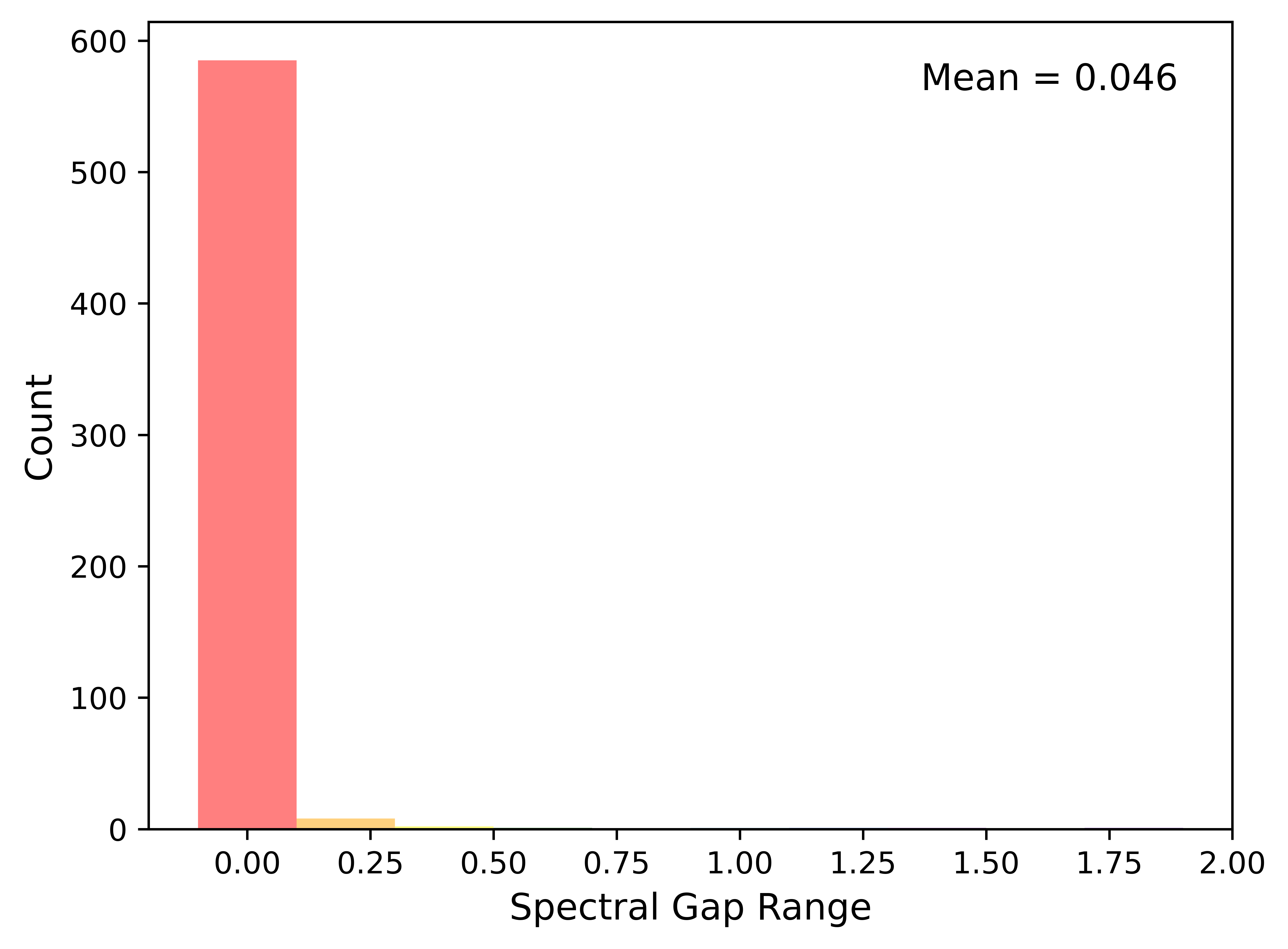}
\end{minipage}
}
\subfigure[IMDB]{
\begin{minipage}[b]{0.45\linewidth}
\includegraphics[height=4.5cm]{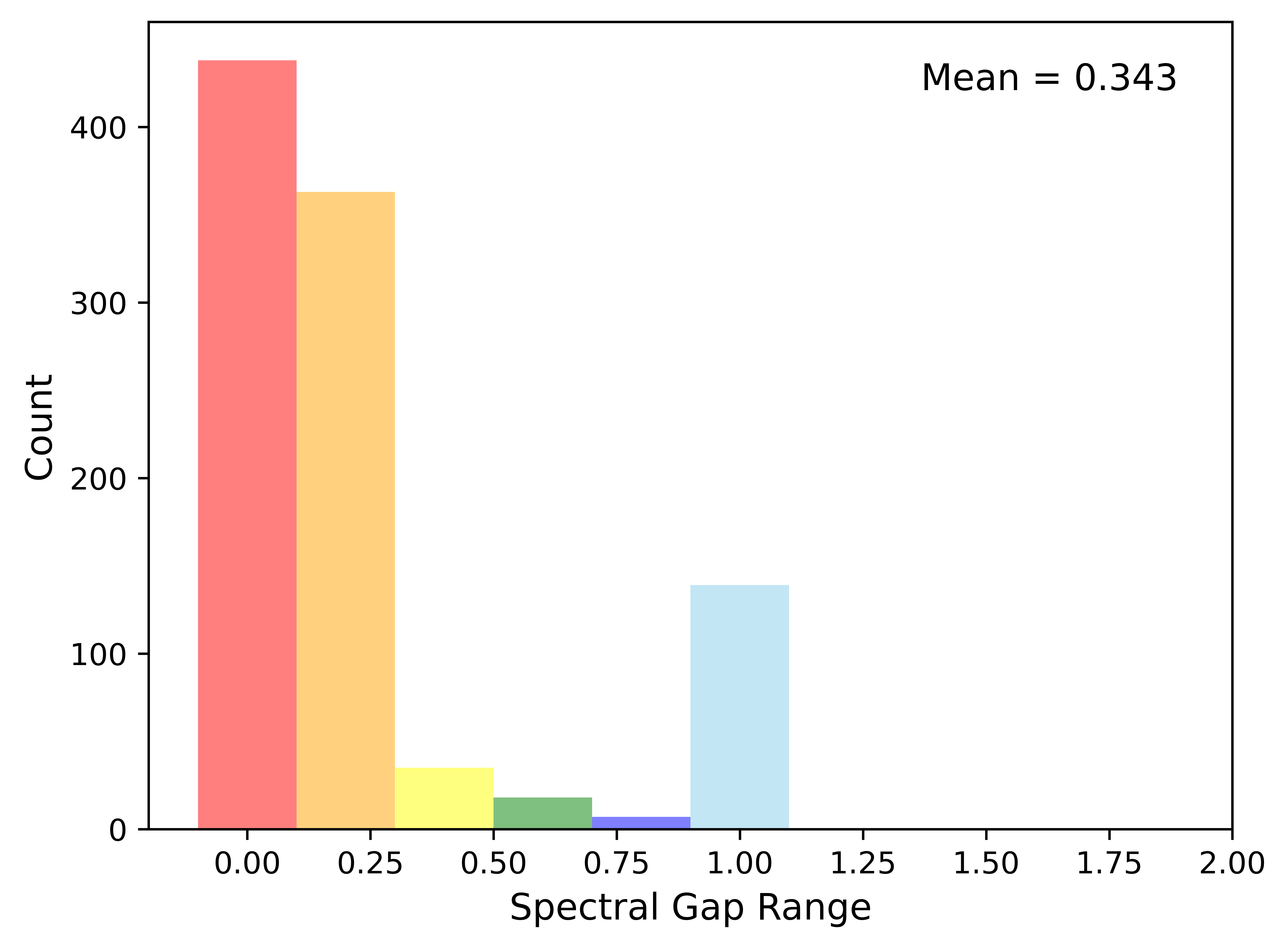}
\end{minipage}}
\subfigure[MUTAG]{
\begin{minipage}[b]{0.45\linewidth}
\includegraphics[height=4.5cm]{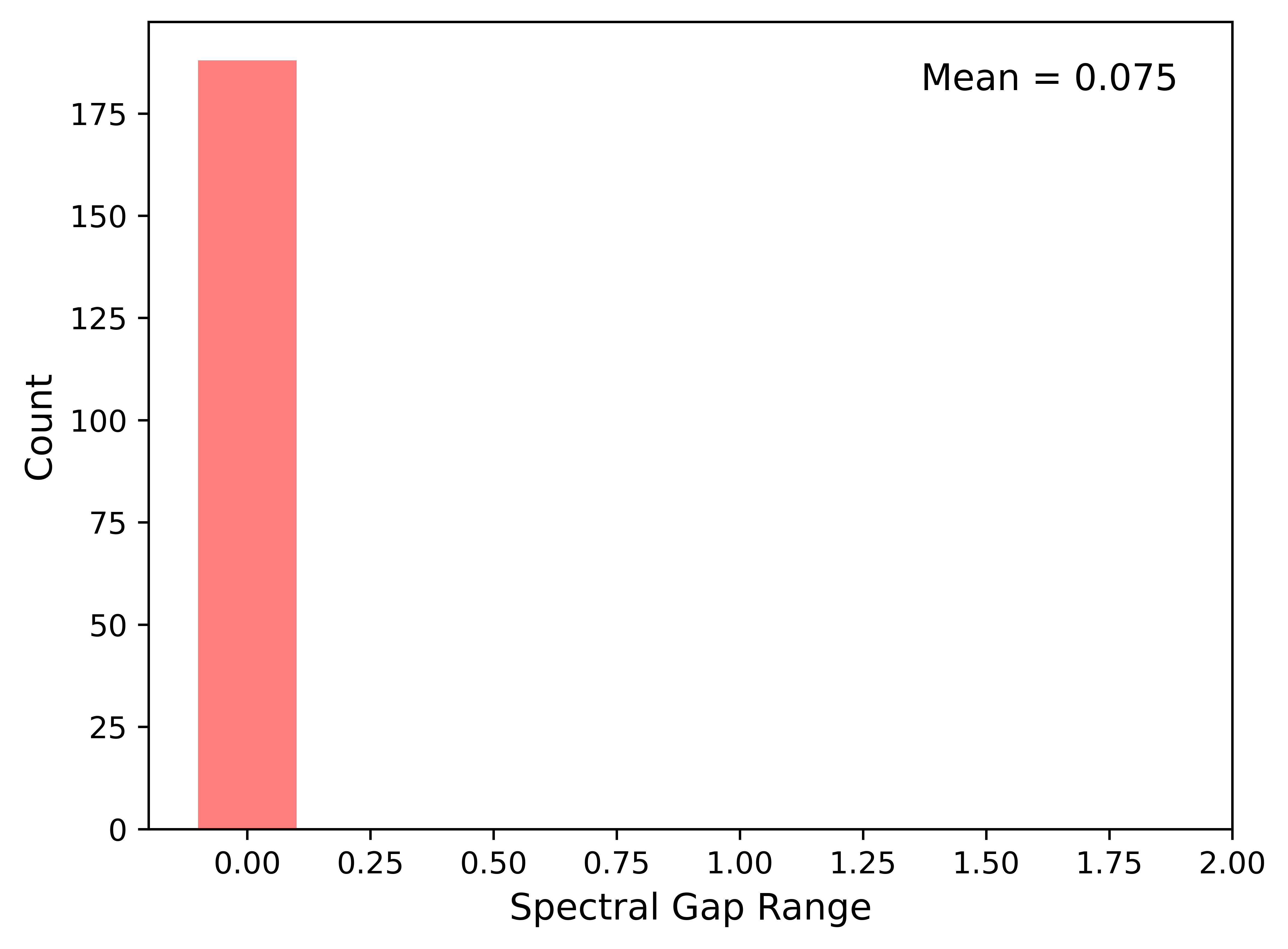}
\end{minipage}}
\subfigure[PROTEINS]{
\begin{minipage}[b]{0.45\linewidth}
\includegraphics[height=4.5cm]{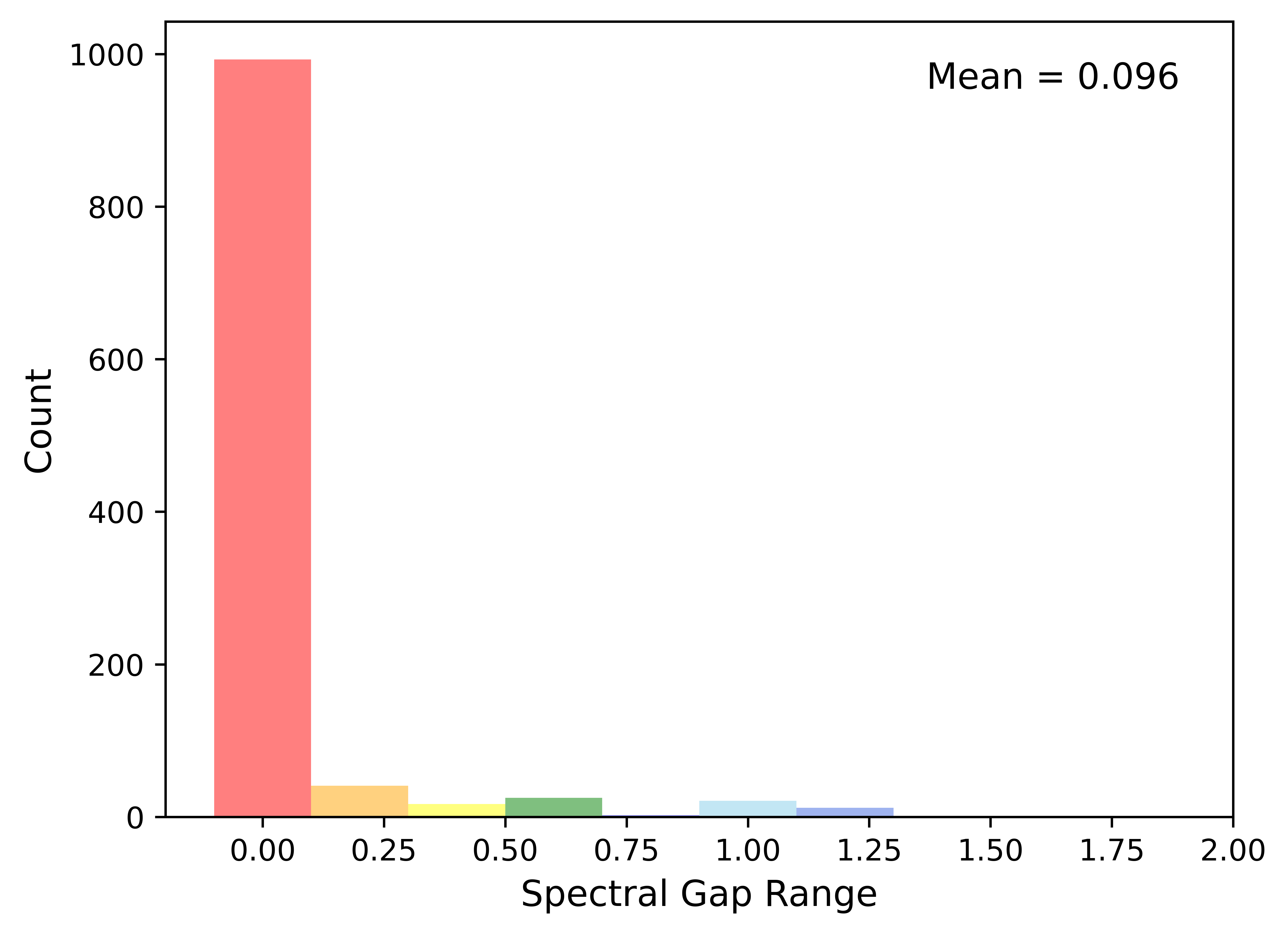}
\end{minipage}}
\caption{The measure of “bottleneckness” using spectral gap. We have quantified the number of graphs with spectral gaps occurring within each interval. The mean value of all spectral gaps is shown in the upper right corner.}
\label{fig:gap_range}
\end{figure}

Numerically, we compare the proposed DSMP model to no graph rewiring, as well as three SOTA baselines: SDRF~\citep{topping2022understanding}, FoSR~\citep{karhadkar2022fosr}, and BORF~\citep{nguyen2023revisiting}. We also evaluate the graph transformer model U2GNN~\citep{nguyen2022universal} to verify the impact of a fully connected graph structure.

We statistically measure the magnitude of the over-squashing issue on some public graph datasets and verify the effectiveness of DSMP, where the baseline rewiring methods have been previously tested~\citep{nguyen2022universal}. 
Compared to no rewiring GNN model, the baselines generally yield relatively significant gains except on IMDB as Table~\ref{tab:stats:os_classification} shows. By analyzing the “bottleneckness” from Figure~\ref{fig:gap_range} and Table~\ref{tab:stats:os_classification} results, we conclude these baseline rewiring methods are more impactful when inherent graph bottlenecks are severe. However, DSMP achieves the top performance across three graph classification tasks despite the over-squashing issue among these datasets. This indicates that DSMP more strongly addresses the over-squashing problem.

\begin{table*}[h]
\caption{The comparison with the graph rewiring methods on solving the over-squashing problem. The red color indicates that the rewiring model is superior to that of GCN addressing the ``bottleneckness'' issue among datasets, with darker shades representing more significant performance. The green color indicates the opposite case.}
\label{tab:stats:os_classification}
\begin{center}
 \begin{minipage}{0.95\textwidth}
\resizebox{\textwidth}{!}{
    \begin{tabular}{lccccccc}
    \toprule
    & \makebox[0.2\textwidth][c]{ \small \textbf{ENZYMES}} & \makebox[0.2\textwidth][c]{\small \textbf{IMDB}} &\makebox[0.2\textwidth][c]{\small \textbf{MUTAG}} & \makebox[0.2\textwidth][c]{\small \textbf{PROTEINS}}   \\
    \midrule
    \# graphs &$600$ & $1,000$ & $188$ & $1,113$ \\
    \# classes & $6$ & $2$ & $2$ & $2$ \\
    \# nodes & $2-126$ & $12-136$ & $10-28$ & $4-620$ \\
    Avg. \# nodes & $32.63$ & $19.77$ & $17.93$ & $39.06$ \\
    Avg. \# edges & $124.27$ & $193.062$ & $39.58$ & $145.63$ \\
    \midrule
    U2GNN & \cellcolor{white!50!red!100!}$37.5${\scriptsize$\pm1.8$} & \cellcolor{white!50!red!14!}$53.6${\scriptsize$\pm3.5$} & \cellcolor{white!50!red!33!}$\bf{88.5}${\scriptsize$\pm3.2$} &  \cellcolor{white!50!red!21!}$78.5${\scriptsize$\pm4.1$} \\ 
    \midrule
    GCN & $25.5${\scriptsize$\pm1.3$} & $49.3${\scriptsize$\pm1.0$} & $68.8${\scriptsize$\pm2.1$} & $70.6${\scriptsize$\pm1.0$} \\
    GCN+SDRF & \cellcolor{white!50!red!5!}$26.1${\scriptsize$\pm1.1$} & \cellcolor{white!50!green!1!}$49.1${\scriptsize$\pm0.9$} & \cellcolor{white!50!red!5!}$70.5${\scriptsize$\pm2.1$} &  \cellcolor{white!50!red!2!}$71.4${\scriptsize$\pm0.8$} \\ 
    GCN+FoSR & \cellcolor{white!50!red!10!}$27.4${\scriptsize$\pm1.1$} & \cellcolor{white!50!red!2!}$49.6${\scriptsize$\pm0.8$} & \cellcolor{white!50!red!14!}$75.6${\scriptsize$\pm1.7$} &  \cellcolor{white!50!red!3!}$72.3${\scriptsize$\pm0.9$} \\ 
    GCN+BORF & \cellcolor{white!50!green!20!}$24.7${\scriptsize$\pm1.0$} & \cellcolor{white!50!red!2!}$50.1${\scriptsize$\pm0.9$} & \cellcolor{white!50!red!1!}$75.8${\scriptsize$\pm1.9$} &  \cellcolor{white!50!green!4!}$71.0${\scriptsize$\pm0.8$} \\ 
    \midrule
    GIN & $31.3${\scriptsize$\pm1.2$} & $69.0${\scriptsize$\pm1.3$} & $75.5${\scriptsize$\pm2.9$} & $69.7${\scriptsize$\pm1.0$} \\
    GIN+SDRF & $33.5${\scriptsize$\pm1.3$} & $68.6${\scriptsize$\pm1.2$} & $77.3${\scriptsize$\pm2.3$} &  $72.2${\scriptsize$\pm0.9$} \\ 
    GIN+FoSR & $25.3${\scriptsize$\pm1.2$} & $69.5${\scriptsize$\pm1.1$} & $75.2${\scriptsize$\pm3.0$} &  $74.2${\scriptsize$\pm0.8$} \\ 
    GIN+BORF & $35.5${\scriptsize$\pm1.2$} & $71.3${\scriptsize$\pm1.5$} & $80.8${\scriptsize$\pm2.5$} &  $71.3${\scriptsize$\pm1.0$} \\ 
    
    \midrule
    DSMP & $\bf{40.6}${\scriptsize$\pm0.5$} & $\bf{77.3}${\scriptsize$\pm0.6$} & $85.0${\scriptsize$\pm0.25$} & $\bf{79.6}${\scriptsize$\pm0.7$} \\ 
    \bottomrule
    \end{tabular}
}
\end{minipage}
	\begin{minipage}{0.014\linewidth}
		\begin{annotate}{\includegraphics[width=0.8\linewidth]{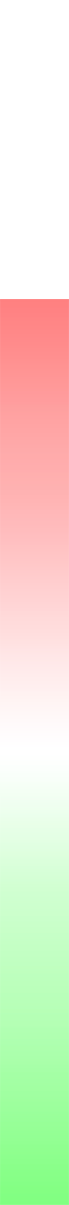}}{1}
    	\note{0.38,0.75}{- $100\%$}
    	\note{0.33,0.25}{- $50\%$}
    	\note{0.28,-0.35}{- $0\%$}
    	\note{0.4,-0.95}{- $-50\%$}
    	\note{0.45,-1.45}{- $-100\%$}
    	\end{annotate}
		\label{ }
	\end{minipage}
    \end{center}
\end{table*}

\subsection{Stability of DSMP}
To assess the stability of our proposed DSMP model in the semi-supervised node classification task, we conducted experiments on eight widely used benchmark datasets. The characteristics of these datasets are summarized in Table \ref{tab:stats:node_classification}. To introduce perturbations in the node attributes, we employed the \emph{attribute injection} method. This approach involves swapping attributes between distinct nodes in a random subgraph, similar to the technique described in \citep{ding2019deep}.

 A certain number of targeted nodes are randomly selected from the input graph. For each selected node $v_i$, we randomly pick $k$ nodes from the graph and select the node $v_j$ whose attributes deviate the most from node $v_i$ among the $k$ nodes by maximizing the Euclidean distance of node attributes, i.e.,  $\left\|x_{i}-x_{j}\right\|_{2}$. Then, we substitute the attributes $x_{i}$ of node $v_i$ with $x_{j}$. Specifically, we set the size of candidates $k=100$ for small datasets, i.e., Cora and Citeseer, and $k=500$ for relatively larger datasets, i.e., PubMed, Coauthor-CS, and Wiki-CS.

 By introducing these controlled perturbations, we aimed to evaluate the resilience of the DSMP model to attribute noise and assess its stability in node classification tasks. The results of these experiments will provide insights into the model's robustness and its ability to handle real-world scenarios with noisy or perturbed data.
\begin{table*}[!htbp]
\caption{Summary of the datasets for node classification tasks.}
\label{tab:stats:node_classification}
\begin{center}
\resizebox{\textwidth}{!}{
    \begin{tabular}{lcccccccc}
    \toprule
    & \textbf{Cora} & \textbf{Citeseer} & \textbf{PubMed} & \textbf{Wiki-CS} & \textbf{Coauthor-CS} & \textbf{Wisconsin} & \textbf{Texas} & \textbf{OGB-arxiv} \\
    \midrule
    \# Nodes & $2,708$ & $3,327$ & $19,717$ & $11,701$ & $18,333$ & $251$ & $183$ & $169,343$\\
    \# Edges & $5,429$ & $4,732$ & $44,338$ & $216,123$ & $100,227$ & $499$ & $309$ & $1,166,243$ \\
    \# Features & $1,433$ & $3,703$ & $500$ & $300$ & $6,805$ & $1,7033$ & $1,703$ & $128$ \\
    \# Classes & $7$ & $6$ & $3$ & $10$ & $15$ & $5$ & $5$ & $40$ \\
    \# Training Nodes & $140$ & $120$ & $60$ & $580$ & $300$  & $120$ & $87$ & $ 90,941$ \\
    \# Validation Nodes & $500$ & $500$ & $500$ & $1769$ & $200$ & $80$ & $59$ & $29,799$ \\
    \# Test Nodes & $1,000$ & $1,000$ & $1,000$ & $5847$ & $1000$ & $51$ & $37$ & $48,603$ \\
    Label Rate & $0.052$ & $0.036$ & $0.003$ & $0.050$ & $0.016$ & $0.478$ & $0.475$ & $0.537$ \\
    Feature Scale & $\{0,1\}$ & $\{0,1\}$ & $[0, 1.263]$ & $[-3,3]$ & $\{0,1\}$ & $\{0,1\}$ & $\{0,1\}$ & $[-1.389,1.639]$\\ 
    \bottomrule
    \end{tabular}
}
\end{center}
\end{table*}
To verify the stability of the proposed model, we compare the DSMP to three popular graph denoising models with different design philosophies: \textsc{APPNP} \citep{klicpera2018predict} avoids global smoothness with residual connections; \textsc{GNNGuard} \citep{zhang2020gnnguard} modifies neighbor relevance in message passing to mitigate local corruption; \textsc{ElasticGNN} \citep{liu2021elastic} and \textsc{AirGNN} \citep{liu2021graph} pursue local smoothness with sparseness regularizers. We also train a $2$ layer GCN \citep{kipf2017semi} as the baseline method, which smooths the graph signals by minimizing the Dirichlet energy in the spatial domain.

\begin{table*}[h]
    \caption{Average performance for node classification over $10$ repetitions. Due to a large number of nodes, the APPNP model encountered an out-of-memory issue on OGB-arxiv dataset.}
    \label{tab:node_classification}
    \resizebox{\textwidth}{!}{
    \begin{tabular}{lcccccccc}
    \toprule
    \textbf{Module} & \textbf{Cora} & \textbf{Citeseer} & \textbf{PubMed} & \textbf{Coauthor-CS} & \textbf{Wiki-CS} & \textbf{Wisconsin} & \textbf{Texas} & \textbf{OGB-arxiv}\\ 
    \midrule
    clean & $81.26${\scriptsize$\pm0.65$} &
    $71.77${\scriptsize$\pm0.29$} &
    $79.01${\scriptsize$\pm0.44$} &
    $90.19${\scriptsize$\pm0.48$} & 
    $77.62${\scriptsize$\pm0.26$} & 
    $56.47${\scriptsize$\pm5.26$} & $65.14${\scriptsize$\pm1.46$} &
    $71.10${\scriptsize$\pm0.21$} \\ 
    \midrule
    \textsc{GCN} & $69.06${\scriptsize$\pm0.74$} &
    $57.58${\scriptsize$\pm0.71$} &
    $67.69${\scriptsize$\pm0.40$} &
    $82.41${\scriptsize$\pm0.23$} & 
    $65.44${\scriptsize$\pm0.23$} & 
    $48.24${\scriptsize$\pm3.19$} &
    $58.92${\scriptsize$\pm2.02$} &
    \bf{$68.42${\scriptsize$\pm0.15$}}  \\     
    \textsc{APPNP} & $68.46${\scriptsize$\pm0.81$} &
    $60.04${\scriptsize$\pm0.59$} &
    $68.70${\scriptsize$\pm0.47$} &
    $71.14${\scriptsize$\pm0.54$} & 
    $56.53${\scriptsize$\pm0.72$} & 
    \bf{$61.76${\scriptsize$\pm5.21$}} &
    $59.46${\scriptsize$\pm0.43$} &
    OOM \\ 
    \textsc{GNNGuard} & $61.96${\scriptsize$\pm0.30$} &
    $54.94${\scriptsize$\pm1.00$} &
    $68.50${\scriptsize$\pm0.38$} &
    $80.67${\scriptsize$\pm0.88$} &
    $65.69${\scriptsize$\pm0.32$} & 
    $46.86${\scriptsize$\pm1.06$} &
    $59.19${\scriptsize$\pm0.81$} &
    $65.75${\scriptsize$\pm0.32$} \\ 
    \textsc{ElasticGNN} & \bf{$77.74${\scriptsize$\pm0.79$}} &
    \bf{$64.61${\scriptsize$\pm0.85$}} &
    $71.23${\scriptsize$\pm0.21$} &
    $79.91${\scriptsize$\pm1.39$} & 
    $64.18${\scriptsize$\pm0.53$} & 
    $53.33${\scriptsize$\pm2.45$} &
    $59.77${\scriptsize$\pm3.24$} &
    $41.34${\scriptsize$\pm0.38$}\\
    \textsc{AirGNN} & $76.22{\scriptsize\pm3.75}$ &
    $62.14{\scriptsize\pm0.82}$ &
    \bf{$74.73{\scriptsize\pm0.43}$} &
    $80.18{\scriptsize\pm0.31}$ & 
    $71.36{\scriptsize\pm0.20}$ & 
    $61.56{\scriptsize\pm0.72}$ &
    $59.46{\scriptsize\pm1.24}$ &
    $52.32{\scriptsize\pm0.58}$  \\
    \midrule
    \textsc{DSMP} & 
    $72.23{\scriptsize\pm0.34}$ & 
    $62.79{\scriptsize\pm0.55}$ &
    $72.53{\scriptsize\pm0.82}$ & 
    $\bm{82.70}{\scriptsize\pm0.37}$ & 
    $\bm{75.54{\scriptsize\pm0.27}}$ & 
    $52.94{\scriptsize\pm1.9}$ & 
    $\bm{60.36{\scriptsize\pm2.67}}$ & 
    $66.73{\scriptsize\pm0.56}$ \\
    \bottomrule\\[-2.5mm]
    \end{tabular}
 }
\end{table*}

\subsection{Graph Classification Experiments}
In this section, we compare the DSMP model with baseline models of graph representation learning. Following the convention, we report the percentage value of mean test accuracy for the classification tasks with TUDatasets and ROC-AUC score for the ogbg-molhiv dataset. The mean performance scores are averaged over 10 repetitions, with their standard deviations provided after the $\pm$ signs. As indicated in Table~\ref{tab:stats:graph_classification}, the DSMP model outperforms the other baseline models in general.

Compared with GCN, the DSMP model outperforms by 12.8\% at most on the PROTEINS dataset, and by 0.9\% at least on the QM7 dataset. When compared with GIN, our model outperforms by 14.2\% at most on the PROTEINS dataset, and by 0.24\% at least on the COLLAB dataset. The DSMP model consistently demonstrates superior performance than the HDS-GAT model, further validating its effectiveness and reliability. Our method shows a noticeable improvement compared to GCN on most datasets, except for QM7 and COLLAB. As illustrated in Figure~\ref{fig:freq_range}, the observed performance difference can be attributed to the fact that these two datasets contain limited high-frequency information. Consequently, methods designed to extract high-frequency features may not be as effective in this context.

\begin{table*}[h]
\caption{Summary of the datasets for graph-level classification tasks. QM7 dataset is not a typical dataset for graph-level classification and we use $R$ to denote it for a regression task. }
\label{tab:stats:graph_classification}
\begin{center}
\resizebox{\textwidth}{!}{
    \begin{tabular}{lccccccc}
    \toprule
    & \textbf{QM7} & \textbf{PROTEINS} & \textbf{D\&D} & \textbf{Mutagenicity} & \textbf{COLLAB} & \textbf{OGBG-MOLHIV} & \textbf{NCI1}  \\
    \midrule
    \# graphs &$7,165$ & $1,113$ & $1,178$ & $4,337$ & $5,000$ & $41,127$  & $4,110$ \\
    \# classes & 1 ($R$) & $2$ & $2$ & $2$ & $3$ & $2$ & $2$ \\
    \# nodes & $4-23$ & $4-620$ & $30-5,748$ & $30-417$ & $32-492$ & $2-222$ & $3-111$ \\
    Avg. \# nodes & $15$ & $39$ & $284$ & $30$ & $74$ & $26$ & $30$ \\
    Avg. \# edges & $123$ & $73$ & $716$ & $31$ & $2,458$ & $28$ & $32$ \\
    \# Features  & $0$ & $3$ & $89$ & $14$ & $0$  & $9$ & $37$ \\
    \midrule
    GCN & $41.9${\scriptsize$\pm1.1$} & $70.6${\scriptsize$\pm1.0$} & $78.9${\scriptsize$\pm3.3$} & $80.7${\scriptsize$\pm3.3$} & $81.8${\scriptsize$\pm1.3$} & $74.9${\scriptsize$\pm2.6$} & $76.9${\scriptsize$\pm1.7$} \\
    GIN & $43.0${\scriptsize$\pm1.5$} & $69.7${\scriptsize$\pm1.0$} & $78.6${\scriptsize$\pm1.3$} & $81.3${\scriptsize$\pm1.1$} & $82.3${\scriptsize$\pm0.9$} & $77.4${\scriptsize$\pm1.2$} & $78.8${\scriptsize$\pm1.5$} \\
    UFG & $41.7${\scriptsize$\pm0.8$} & $77.8${\scriptsize$\pm2.6$} & $80.9${\scriptsize$\pm1.7$} &  $81.6${\scriptsize$\pm1.4$} &
    $\bf{82.8}${\scriptsize$\pm0.3$} & $78.8${\scriptsize$\pm0.5$} & $77.9${\scriptsize$\pm1.2$}  \\
    HDS-GAT & $42.2${\scriptsize$\pm0.5$} & $78.5${\scriptsize$\pm1.5$} & $79.8${\scriptsize$\pm1.1$} &  $81.2${\scriptsize$\pm0.5$} &
    $82.2${\scriptsize$\pm0.6$} & $78.3${\scriptsize$\pm0.8$} & $78.2${\scriptsize$\pm1.5$}  \\
    DSMP & $\bf{41.5}${\scriptsize$\pm1.0$} & $\bf{79.6}${\scriptsize$\pm0.7$} & $\bf{82.5}${\scriptsize$\pm1.1$} & $\bf{82.0}${\scriptsize$\pm0.9$} & $82.5${\scriptsize$\pm0.3$} & $\bf{79.2}${\scriptsize$\pm1.7$} & $\bf{79.8}${\scriptsize$\pm1.0$} \\ 
    \bottomrule
    \end{tabular}
}
\end{center}
\end{table*}

\begin{figure}[h]
\centering
\subfigure[QM7]{
\begin{minipage}[b]{0.45\linewidth}
\includegraphics[height=4.2cm]{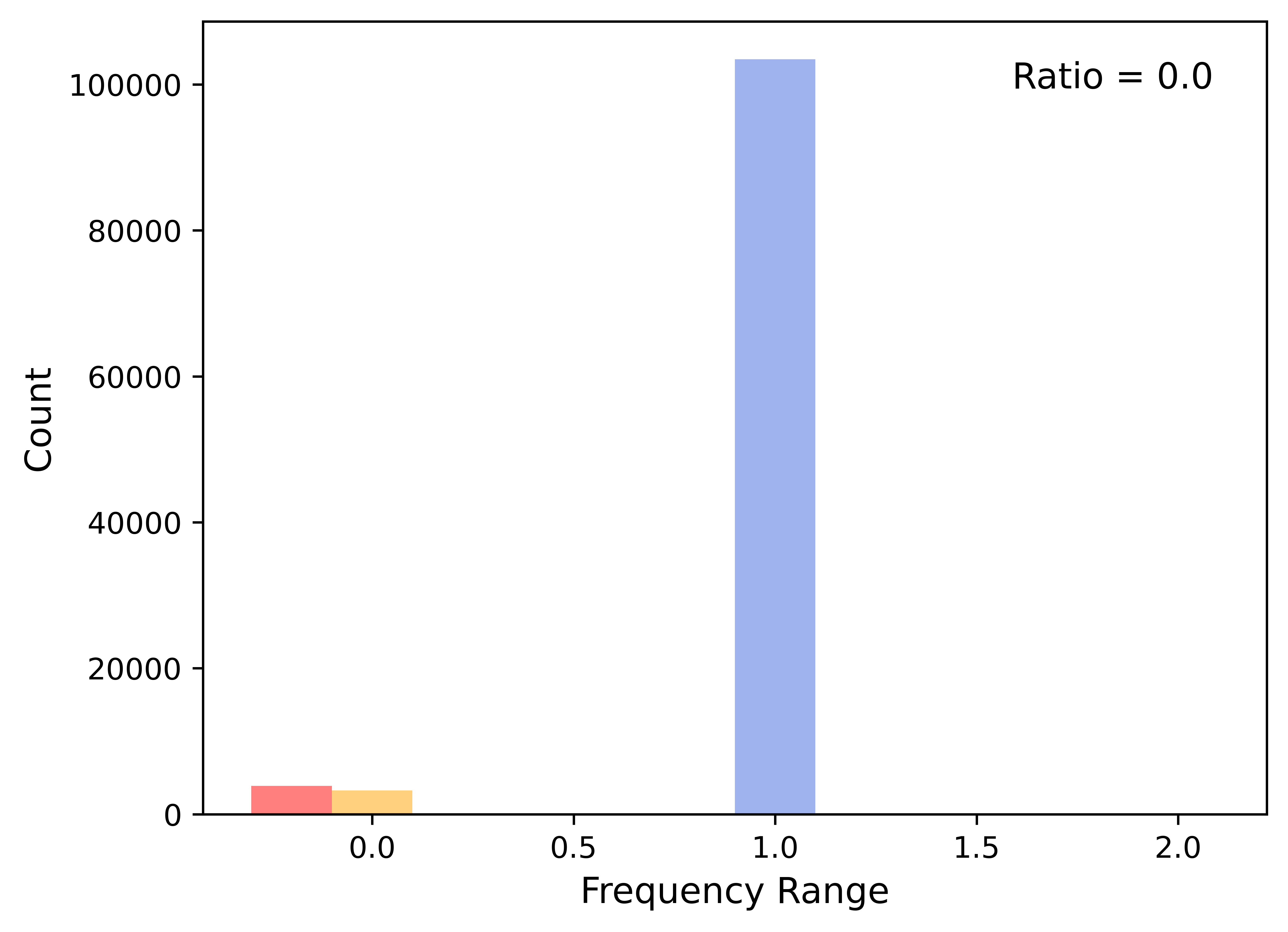}
\end{minipage}
}
\subfigure[COLLAB]{
\begin{minipage}[b]{0.45\linewidth}
\includegraphics[height=4.2cm]{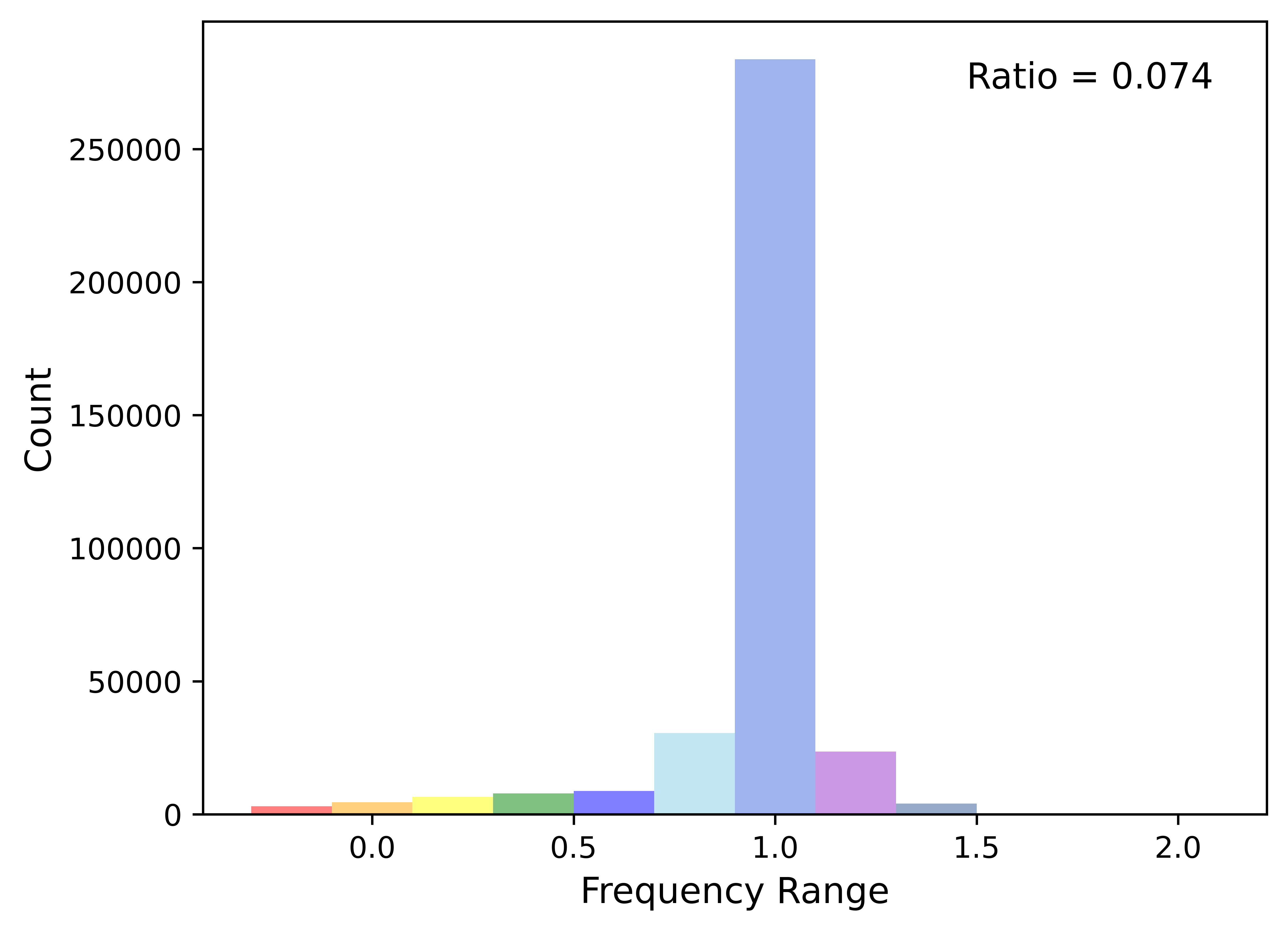}
\end{minipage}}
\subfigure[PROTEINS]{
\begin{minipage}[b]{0.45\linewidth}
\includegraphics[height=4.2cm]{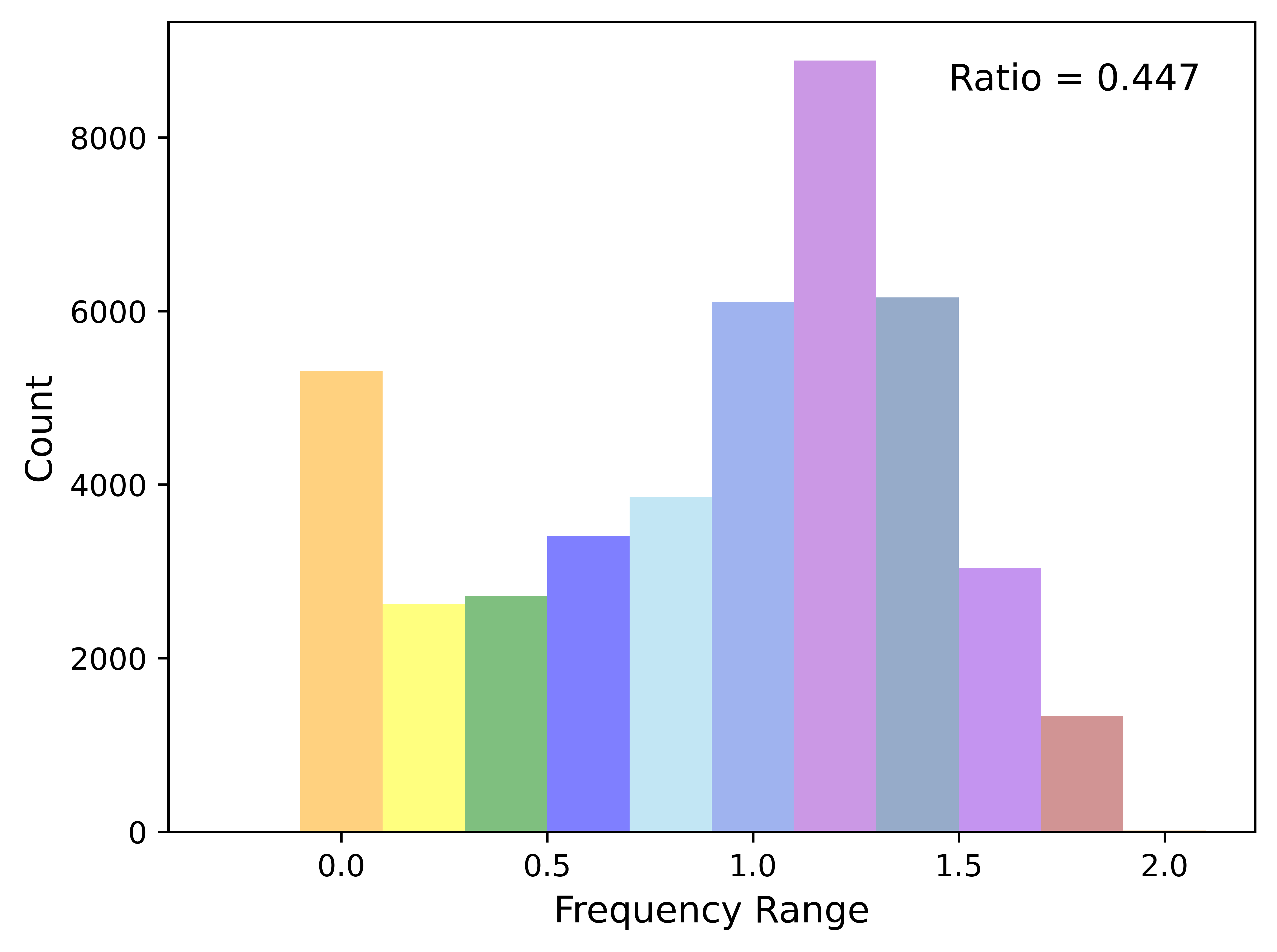}
\end{minipage}}
\subfigure[D\&D]{
\begin{minipage}[b]{0.45\linewidth}
\includegraphics[height=4.2cm]{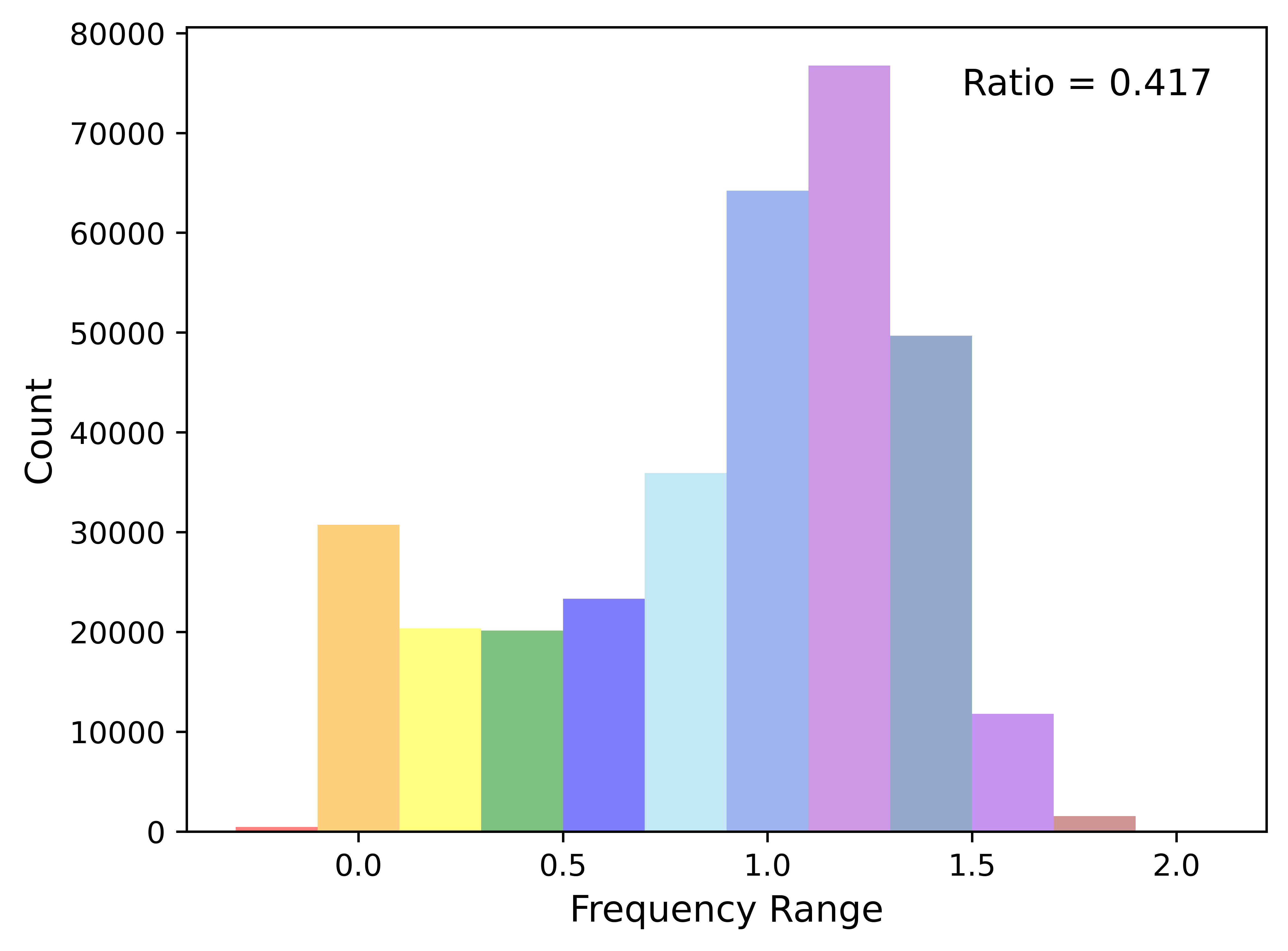}
\end{minipage}}
\subfigure[Mutagenicity]{
\begin{minipage}[b]{0.45\linewidth}
\includegraphics[height=4.2cm]{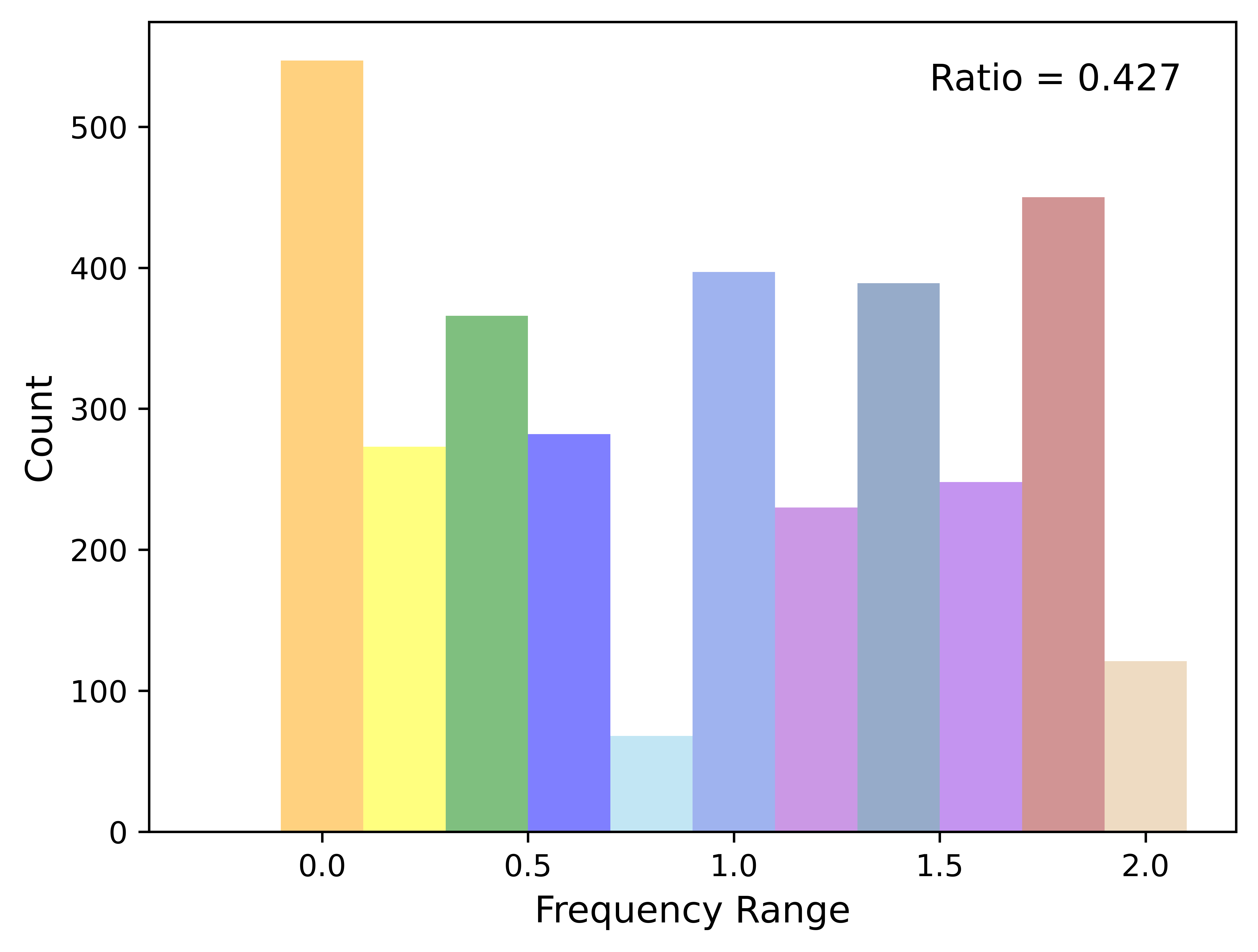}
\end{minipage}}
\subfigure[OGBG-MOLHIV]{
\begin{minipage}[b]{0.45\linewidth}
\includegraphics[height=4.2cm]{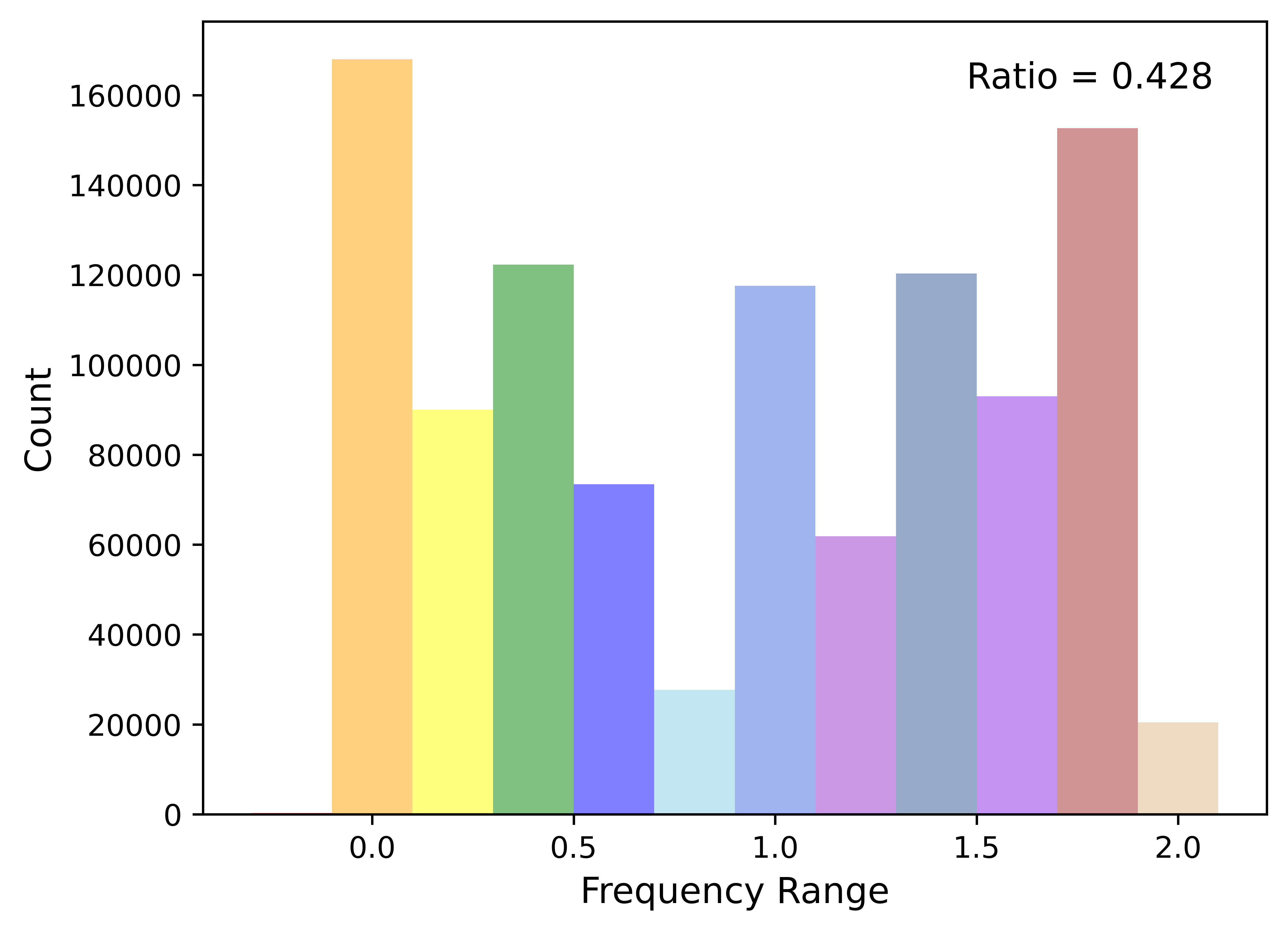}
\end{minipage}}
\subfigure[NCI1]{
\begin{minipage}[b]{0.45\linewidth}
\includegraphics[height=4.2cm]{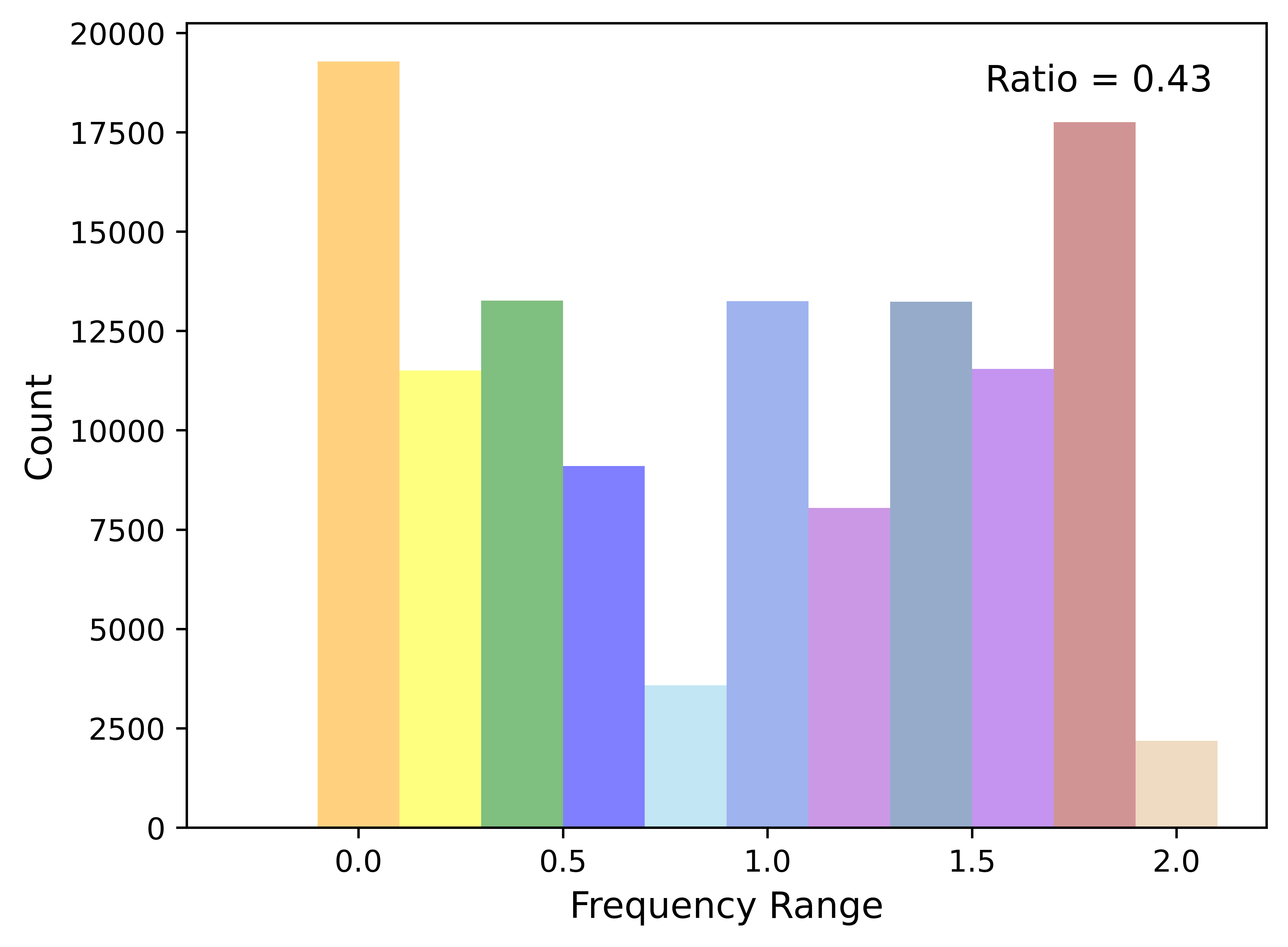}
\end{minipage}}
\subfigure[ENZYMES]{
\begin{minipage}[b]{0.45\linewidth}
\includegraphics[height=4.2cm]{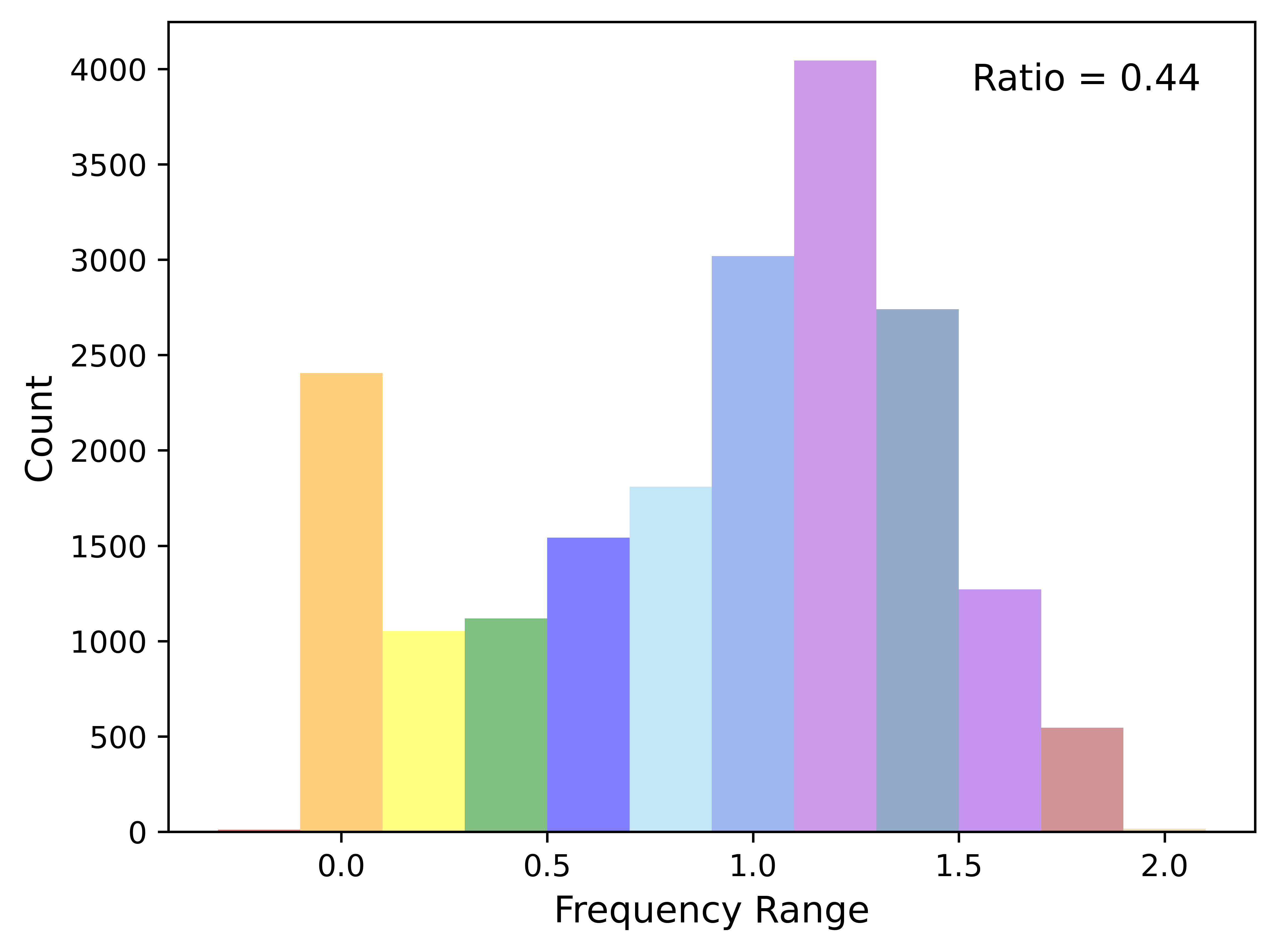}
\end{minipage}}
\caption{The frequency analysis of different graph-level classification datasets. We count the number of frequencies in each interval and display the ratio of frequency greater than 1.2 in the upper right corner of the each figure.}
\label{fig:freq_range}
\end{figure}

\subsection{Computational Cost}
\paragraph{Time complexity}
Let $E=|\mathcal{E}|$ be the number of edges in the graph, $N$ be the number of nodes and $d$ be the feature dimension of nodes. According to \eqref{eq:mpnn}, the computational cost for one propagation is $O\left(ENd\right)$. For the propagation of the SMP module, as \eqref{eq:smpnn} shows, the computational cost is also $O\left(ENd\right)$.

We also compare the computational cost of the DSMP model with some popular MPNNs, which are considered as baseline models, in terms of testing time complexity. In Table \ref{tab:computationalTime}, we present the time cost for each model. We randomly select a batch of data with $8$ graph samples from the PROTEINS dataset and increase the number of layers in the message passing frameworks.
\begin{table}[h]
    \caption{Comparison of the time cost on PROTEINS dataset.}
    \centering
    \resizebox{0.6\linewidth}{!}{
    \begin{tabular}{lcccc}
        \toprule
        \makebox[0.2\textwidth][l]{\textbf{Layer Num}} &  \makebox[0.2\textwidth][c]{$1$ } & \makebox[0.2\textwidth][c]{$3$} & \makebox[0.2\textwidth][c]{$50$} \\
        \midrule
        \textsc{GCN} & $ 0.0847$ &$0.0868$ & $0.1151$ \\
        \textsc{GIN} & $ 0.0857$ &$0.088$ & $0.1444$  \\
        \textsc{DSMP} & $ 0.0992$ &$0.1226$ & $0.3048$  \\
        \bottomrule
    \end{tabular}
    }
    \label{tab:computationalTime}
\end{table}

\paragraph{Space complexity}
In conventional message passing frameworks, the sparse edge connection matrix and dense node feature matrix are stored in memory, resulting in a memory cost of $2E + O(Nd)$. According to \eqref{eq:matrix_form}, the propagation of each SMP module requires one low-pass framelet matrix and $K \times J$ high-pass framelet matrices. In our numerical experiments, we set $K=1$ and $J=2$ to utilize scaling information up to $2$, which connects only 2-hop neighbors as shown in Figure~\ref{fig:main_idea}. Let $R$ be the number of unconnected 2-hop neighbors, then the memory cost for SMP is $6E + 2R + O(Nd)$.

\section{Conclusion and Future Work}
In conclusion, this work introduces the DSMP neural network as a solution to address the stability, over-smoothing, and over-squashing issues commonly observed in current GNN models. By integrating the multiscale and multilevel scattering transform with a learnable message passing scheme, the DSMP model offers a novel approach to processing graph-structured data.
The proposed message passing framework, viewed from the perspective of spectral methods, has been theoretically proven to effectively mitigate the aforementioned problems. Through extensive experiments, we have demonstrated that the DSMP model outperforms existing GNNs in various graph reasoning tasks, highlighting its superior performance and effectiveness.

In terms of future work, there are several potential directions to explore. Firstly, further investigation can be conducted to enhance the scalability and efficiency of the DSMP model, particularly in large-scale graph datasets. Additionally, exploring the applicability of the DSMP model in other domains, such as social network analysis or recommendation systems, could provide valuable insights into its versatility and generalizability.
Furthermore, the interpretability of the DSMP model can be explored, enabling a better understanding of its decision-making process. This could involve analyzing the importance of different features or nodes in the graph and providing explanations for the model's predictions.

Overall, the proposed DSMP model presents a promising direction for improving the performance and addressing the limitations of GNNs. The future work mentioned above will contribute to further advancing the field of GNNs and their applications.

\vskip 0.2in
\newpage
\bibliography{reference.bib}

\appendix

\section{Table of notations}
\label{app:notations}

\scalebox{.9}{
\begin{tabular}{l|p{0.81\textwidth}}
\toprule
\textbf{Symbol} & \textbf{Meaning}\\
\toprule
$\mathcal{G}$ & graph\\
$\mathcal{V}$ & set of nodes (vertices) \\
$\mathcal{E}$ & set of edges \\
$\mathcal{A}$ & adjacency matrix \\
$\mathcal{N}^{m}(i)$ & $m$-hop neighbors of $i$-th node \\
$\mathcal{N}(i)$ &  $1$-hop neighbors of $i$-th node \\
$|\mathcal{V}|$ & cardinality of set $\mathcal{V}$\\
$\operatorname{vol}(\mathcal{G})$ & volume of a graph\\
$\boldsymbol{d}(v)$ & degree of vertex $v$\\
$ \mathbb{R}$ & real vector space \\
$l_2(\mathcal{G})$  & Hilbert space on graph $\mathcal{G}$\\
$L$ & graph Laplacian matrix \\
$\mathcal{L}$ & normalized graph Laplacian matrix \\
$\mathcal{D}$ &  degree matrix \\
$\delta_{\ell, \ell^{\prime}}$ & Kronecker delta function \\
$a$ & low-pass filter\\
$b^{(r)}$ & $r$-th high-pass filter\\
$\widehat{\alpha}, \widehat{\beta}$ & Fourier transformation of $a, b$\\
$\boldsymbol{\eta}$ & filter bank ( set of filtering functions ) \\
$\Psi$ & set of scaling functions \\
$R$    & dilation scale \\
$\{(\lambda_\ell,\vu_\ell)\}_{\ell=1}^{n}$ & eigenpairs of the graph Laplacian $\mathcal{L}$\\
$\boldsymbol{\varphi}_{l,p}(v)$, $\boldsymbol{\psi}_{l,p}^{(r)}(v)$ &  low-pass and the $r$-th high-pass framelet basis of node $v$ on level $l$ and node $p$\\
$\boldsymbol{\gW}_{k,l}$ & framelet decomposition operator of scale $k$ and level $l$\\
$\boldsymbol{\gW}_{k,l}^{\natural}$ & approximated framelet decomposition operator\\
$\boldsymbol{X}^{\text {in }}$ & input graph feature matrix\\
$\boldsymbol{X}^{\text {out }}$ & embedding of graph feature matrix\\
$\hat{\boldsymbol{X}}$ & framelet coefficients \\
$\Phi_{\mathcal{G}}(\mX)$ & graph scattering transformation of $\mathcal{G}$\\
$\phi_m(\mathcal{G}, \mX)$ & $m$-th layer of scattering transformation \\
$\varphi$ & average operator \\
$\hat{\sigma}$ & pointwise absolute value function\\
$\sigma$ & pointwise ReLU function \\
$\square(\cdot)$ & aggregation function\\
$\Gamma(\cdot)$, $\phi(\cdot)$ & linear functions\\
$E_{\mathcal{G}}(\mX)$ & \textit{Dirichlet energy} of graph $\mathcal{G}$ with feature matrix $\mX$ \\
$h(\mathcal{G})$ & Cheeger constant\\
$\partial \mathcal{S}$ & edge boundary of subset $\mathcal{S}$\\
$\boldsymbol{Z}^{(t)}$ & scattering coefficients at layer $t$\\
$\boldsymbol{\theta}_{0}, \boldsymbol{\theta}_{r,l}$ & learnable parameter matrices\\
$\tr(\mX)$ & trace of matrix $\mX$\\
$\mu_i(\mathcal{A})$ & $i$-th largest eigenvalue of matrix $\mathcal{A}$\\
$\mathcal{R}(\mathcal{G})$ & rewired graph of $\mathcal{G}$\\

\bottomrule
\end{tabular}
}

\end{document}